\newtheorem{theorem}{Theorem}[section]
\newtheorem{definition}{Definition}[section]
\newtheorem{lemma}{Lemma}[section]
\newtheorem{prop}{Proposition}[section]
\newtheorem{property}{Property}[section]
\newtheorem{remark}{Remark}[section]
\newtheorem{example}{Example}[section]
\newtheorem*{theoremA}{Theorem A}
\newtheorem*{theoremB}{Theorem B}
\newcommand{\cT}{\mathcal{T}}
\newcommand{\mydotw}[1]{\begin{scope}[shift={#1}] \fill[shift only,white] (0,0) circle (1.5pt); \draw[shift only,thick]  (0,0) circle (1.5pt);   \end{scope}}
\begin{document}

\title{Module categories for $A_n$ web categories from $\tilde{A}_{n-1}$-buildings.}
\author{Emily McGovern \\  \href{ecmcgove@ncsu.edu}{ecmcgove@ncsu.edu} } 

\maketitle

\textbf{Abstract: }We equip the category of vector bundles over the vertices of a locally finite $\tilde{A}_{n-1}$ building $\Delta$ with the structure of a module category over a category of type $A_{n}$ webs in positive characteristic. This module category is a $q$-analogue of the $Rep(SL_{n})$ action on vector bundles over the $sl_n$ weight lattice.  We show our module categories are equivariant with respect to symmetries of the building, and when a group $G$ acts simply transitively on the vertices of $\Delta$ this recovers the fiber functors constructed by Jones.

\section{Introduction}
\label{introduction}

In \cite{Jones21}, the author presents the construction of a fiber functor on a certain monoidal category $Web(SL_{n}^{-})$ of type A webs in positive characteristic, which can be extended to a non-standard fiber functor on the $Rep(SL_{2k+1})$ as shown in \cite[Example 7.3.4]{CEOP21}. This functor is constructed through the use of a combinatorial structure called a triangle presentation of type $\tilde{A}_{n-1}$. Triangle presentations were first introduced in \cite{CMSZ93} as a characterization of a group acting simply transitively on a building of type $\tilde{A}_{n-1}$. The existence of a triangle presentation relies  on the presence of a vertex-transitive action of a group on a building. This type of action requires the high level of symmetry found only in type $A$ combinatorics. Thus we cannot apply the same ideas to obtain non-standard fiber functors in other types. 

Instead of looking for fiber functors, it is natural to look more generally for \textit{module categories} arising from the combinatorics of buildings, which may exist outside type A. There is a canonical module category for $\text{Rep}(SL_{n})$ whose underlying category is the category of vector bundles on the type $A$ weight lattice $L$, which arises from restriction. In a certain sense, a locally finite building of type $\tilde{A}_{n-1}$ and order $q$ (see note after Lemma \ref{buildingFPG}) can be thought of as a $q$-analogue of $L$. This motivates the following result, which is the main theorem of the paper.
\begin{theoremA}
\label{theoremA}
If $\Bbbk$ is a field of characteristic $p \geq n-1$ and $\Delta$ is an $\tilde{A}_{n-1}$ building of order $q \equiv 1 \mod p$. There is a monoidal functor $Web(SL_n^{-}) \rightarrow End(Vec(\Delta))$, where both categories are defined over $\Bbbk$. When $n$ is odd, this equips $Vec(\Delta)$ with the structure of a module category over $Tilt(SL_n)$.
\end{theoremA}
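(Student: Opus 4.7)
The plan is to construct the monoidal functor $F\colon Web(SL_n^{-}) \to End(Vec(\Delta))$ by specifying its values on generating objects and generating morphisms and then verifying the defining relations of $Web(SL_n^{-})$ as identities of natural transformations in $End(Vec(\Delta))$.

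For each generating object $k \in \{1, \ldots, n-1\}$ of $Web(SL_n^{-})$, I would send it to an endofunctor $F_k$ of $Vec(\Delta)$ defined pointwise: for a vector bundle $V$ on the vertices of $\Delta$, the fiber of $F_k(V)$ at a vertex $v$ is $\bigoplus_{w \in N_k(v)} V_w$, where $N_k(v) \subset \Delta$ is a set of vertices standing in a type-$k$ neighbor relation to $v$ determined by the building structure. The key numerical input is that $|N_k(v)|$ equals the Gaussian binomial $\binom{n}{k}_q$, which under the hypothesis $q \equiv 1 \pmod{p}$ reduces to $\binom{n}{k} = \dim \wedge^k(\Bbbk^n)$ in $\Bbbk$. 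The generating merge and split morphisms $k \otimes \ell \leftrightarrow k + \ell$ of $Web(SL_n^{-})$ would be realized as natural transformations whose component at each vertex $v$ is an incidence matrix between pairs $(w_1, w_2) \in N_k(v) \times N_\ell(w_1)$ and single neighbors in $N_{k+\ell}(v)$, with nonzero entries exactly when the pair extends to a simplex witnessing the sum; the pivotal cups and caps are supplied by the canonical pairing $N_k(v) \leftrightarrow N_{n-k}(v)$ induced by type duality in the building.

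The main obstacle is verifying the defining relations of $Web(SL_n^{-})$. Isotopy and square-switch relations should reduce to combinatorial identities inside a single apartment of $\Delta$, which is isomorphic to the standard $\tilde{A}_{n-1}$ apartment; there they become explicit identities about configurations of sublattices over the residue field $\mathbb{F}_q$. The more delicate relations are the bubble/digon relations, which demand that closed-loop evaluations equal specific scalars such as $\binom{n}{k}_q \cdot \mathrm{id}$. The hypothesis $q \equiv 1 \pmod p$ is essential precisely here, since it identifies the Gaussian binomials counting configurations in $\Delta$ with their classical-limit counterparts in $\Bbbk$, while $p \geq n-1$ ensures invertibility of the normalizing scalars required to make the merge and split morphisms mutually compatible. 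Crucially, each relation is to be checked vertex-by-vertex using only the local building structure, rather than via any global group action; this is what allows the theorem to hold for arbitrary buildings $\Delta$, extending the setting of \cite{Jones21} where a simply transitive action and a triangle presentation were required.

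For the final claim, when $n$ is odd the category $Tilt(SL_n)$ is known to arise as a monoidal quotient of $Web(SL_n^{-})$; since the constructed functor $F$ satisfies all the relations of $Web(SL_n^{-})$, it descends to a monoidal functor $Tilt(SL_n) \to End(Vec(\Delta))$, which is by definition the data of a $Tilt(SL_n)$-module category structure on $Vec(\Delta)$.
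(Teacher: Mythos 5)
Your overall architecture matches the paper's: the functors $F_k$ with fibers $\bigoplus_{w\in N_k(v)}V_w$ are exactly the paper's $F_m$ (the paper merely factors the construction through an intermediate graph planar algebra $G(\Delta)$ before landing in $End(Vec(\Delta))$, which changes nothing essential), and the reduction of Gaussian binomials to ordinary binomials via $q\equiv 1 \bmod p$ is indeed where that hypothesis enters. However, there are two genuine gaps in your verification plan.

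First, the relations do \emph{not} reduce to identities inside a single apartment. The composites of your incidence natural transformations sum over \emph{all} intermediate vertices of $\Delta$, and a single apartment is a Coxeter complex --- a ``building of order $1$'' --- so restricting to one apartment would miss almost all of the terms being counted. The correct local structure is the link of a vertex, which is a spherical $A_{n-1}$ building, i.e.\ a finite projective geometry of order $q$; the bigon and square-switch counts are counts of subspaces there (this is where $\bigl[\!\begin{smallmatrix} n\\ k\end{smallmatrix}\!\bigr]_q$ actually arises), and they cannot be seen apartment-by-apartment. Apartments \emph{are} used, but for a different purpose that your proposal omits entirely: the tetrahedron property (two $2$-simplices of a tetrahedron present in $\Delta$ force the whole tetrahedron, proved by placing both in a common apartment and invoking the corresponding fact for the Coxeter complex). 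This is the key structural input that makes associativity and the existence half of the square-switch argument work for an \emph{arbitrary} building, and without it the ``check vertex-by-vertex'' step has no mechanism. Relatedly, there are no normalizing scalars in the construction --- the generators go to $\{0,1\}$-valued incidence data --- so your stated role for $p\geq n-1$ is not where that hypothesis is actually used (it is inherited from the reduction of the general square-switch relation to its special cases).

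Second, the passage to $Tilt(SL_n)$ for $n$ odd is not a descent along a quotient: $Tilt(SL_n)$ is the \emph{Karoubi envelope} of $Web(SL_n^-)$, not a quotient of it, so the correct argument is that a monoidal functor out of $Web(SL_n^-)$ extends canonically to the idempotent completion. The conclusion survives, but the universal property you invoke is the wrong one and the ``descends because it satisfies the relations'' step does not parse as stated.
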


In the above theorem, $Vec(\Delta)$ denotes the category of vector bundles over the set of vertices of $\Delta$. From this module category, we can recover the non-standard fiber functor introduced in \cite{Jones21} by studying symmetries. Any action of a group $G$ on $\Delta$ induces an action on the category $Vec(\Delta)$ and so we can consider its equivariantization $Vec(\Delta)^{G}$. Our second main theorem extends the module category of Theorem A to $Vec(\Delta)^{G}$. 
\begin{theoremB}
If $\Bbbk$ is a field as in Theorem A, then for any type-rotating action of a group $G$ on $\Delta$ there exists a monoidal functor $Web(SL_n^{-}) \rightarrow End(Vec(\Delta)^{G})$, equipping $Vec(\Delta)^{G}$ with the structure of a $Web(SL_n^{-})$ module category. 
\end{theoremB}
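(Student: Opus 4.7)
The plan is to show that the monoidal functor $F: Web(SL_n^{-}) \to End(Vec(\Delta))$ from Theorem A factors through the monoidal subcategory $End^G(Vec(\Delta))$ of $G$-equivariant endofunctors, and then to compose with the canonical monoidal functor $End^G(Vec(\Delta)) \to End(Vec(\Delta)^G)$ that sends a $G$-equivariant endofunctor to its restriction to equivariant objects. This is the standard mechanism by which a module category structure descends to an equivariantization, so the real content will be verifying the equivariance of $F$.

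First I would unpack the action of $G$ on $Vec(\Delta)$: each $g \in G$ acts on the vertex set of $\Delta$, and pullback along $g^{-1}$ gives an autoequivalence $g_*$ of $Vec(\Delta)$; these assemble into a strict $G$-action. The type-rotating hypothesis is precisely what makes the action of $g$ interact compatibly with the construction of Theorem A, because the endofunctors $F(V)$ for $V \in Web(SL_n^{-})$ are built from the types of vertices and the type-labeled adjacency data of $\Delta$, and a type-rotating automorphism preserves this data up to a cyclic shift matching the cyclic symmetry already present in $Web(SL_n^{-})$.

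Next I would produce, for each object $V$ of $Web(SL_n^{-})$, a canonical natural isomorphism $\eta_g^V: g_* \circ F(V) \Rightarrow F(V) \circ g_*$ by transporting the fiberwise construction at a vertex $v$ to that at $g \cdot v$, and check that the $\eta_g^V$ satisfy the cocycle condition in $g$, are natural in morphisms (webs) $W: V \to V'$, and compose monoidally so that $\eta_g^{V \otimes V'}$ is the horizontal composite of $\eta_g^V$ and $\eta_g^{V'}$. This upgrades $F$ to a monoidal functor $Web(SL_n^{-}) \to End^G(Vec(\Delta))$, and postcomposing with restriction to equivariant objects yields the monoidal functor to $End(Vec(\Delta)^G)$ claimed in the theorem.

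The main obstacle I anticipate is the coherence verification: it is not enough to produce isomorphisms $g_* F(V) \cong F(V) g_*$ objectwise, one must check naturality in webs and the cocycle condition in $g$ simultaneously, and this requires comparing the explicit coefficients defining $F(V)$ at $v$ and at $g \cdot v$. The key input should be that type-rotating automorphisms permute the auxiliary $n$-torsion data (type labels, relevant $n$-th roots of unity, apartment-level combinatorics) in a way consistent with the symmetry built into $Web(SL_n^{-})$, so that the $\eta_g^V$ are canonically determined by the setup of Theorem A rather than requiring ad hoc choices.
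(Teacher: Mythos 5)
Your proposal follows essentially the same route as the paper: lift the endofunctors $F_\sigma$ to $Vec(\Delta)^{G}$ and verify that the natural transformations coming from webs are $G$-invariant so that they descend to $End(Vec(\Delta)^{G})$, the real content being the equivariance of the images of the generating webs. The only refinement worth noting is that for a type-rotating action the paper shows $F_m A_g = A_g F_m$ holds strictly (Lemma \ref{CommutingFunctors}), so the coherence isomorphisms $\eta_g^{V}$ and cocycle conditions you anticipate are trivial, and the remaining check is simply that the web images, being $\{0,1\}$-valued indicator functions on triangles, are preserved because a type-rotating graph automorphism preserves edge labels (differences of vertex types); no roots of unity or other auxiliary torsion data enter the construction.
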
 

When $G$ acts simply transitively on an affine building in type $A$, then $Vec(\Delta)^{G} \simeq Vec$, and as we have $\text{End}(Vec)\cong Vec$, we recover the fiber functor on the $Web(SL_n^{-})$ category constructed in \cite{Jones21}.

In order to build the module category from Theorem $A$, we consider an intermediate tensor category $G(\Delta)$, called the \textit{graph planar algebra} of $\Delta$. Graph planar algebras were introduced in \cite{Jones98}, \cite{Jones99} and \cite{Mor10}. Our version varies slightly, but we show in remark \ref{gpa from paper} how it fits into the pre-existing framework.

We expect similar results relating web categories and buildings should be true outside of type $A$. Following the introduction of web categories in \cite{Kup96}, there have been many web categories defined in other types. For example, \cite{BERT21} gives a presentation of a web category that corresponds to the representation theory of $sp_{2n}$ and the Coxeter/Lie combinatorics in type $C$. However, it is not immediately clear which presentations of these web categories will yield natural module category structures. Finally, we remark that as in \cite{Jones21}, the functors and natural transformations defining our module category make sense in characteristic $0$, but do not satisfy the $Web(SL_n^{-})$ relations. Instead they generate a new category related to the quantum automorphism group of the building, introduced and studied in \cite{https://doi.org/10.48550/arxiv.2209.03770}, building on the previous work of \cite{MR3899967}. It would be interesting to precisely clarify the relationship between $\text{Web}(SL^{-}_{n})$ and the quantum automorphism group of these graphs.

 The structure of this paper is as follows. In section \ref{preliminaries}, we introduce the main players in our work, namely $\tilde{A}_{n-1}$ buildings and the categories $Web(SL_n^{-})$ and $G(\Delta)$ for a locally finite building $\Delta$ of type $\tilde{A}_{n-1}$. Section \ref{functorsection} defines the functor $Web(SL_n^{-}) \rightarrow G(\Delta)$, while also proving some results about buildings of order 1 (which are Coxeter complexes). Section \ref{Constructing Module Categories} then details the construction of $Vec(\Delta)^{G}$ as a $Web(SL_n^{-})$ module category and provide examples of the category for several actions of $G$ on $\Delta$.

\section{Preliminaries}
\label{preliminaries}
\subsection{Buildings and Graph Planar Algebras}
\label{buildings and gpas}
In this section, we will present the definitions of Coxeter complexes and buildings, with a focus on types $\tilde{A}_{n-1}$ and $A_{n-1}$. We will also state several facts about the structure of these objects and discuss their connections to each other and to the representation theory of the Lie algbera $sl_n$. We will then introduce graph planar algebras, and describe the construction of a graph planar algebra for any building of type $\tilde{A}_{n-1}$. We refer the reader to \cite{AB10} for futher background on Coxeter systems and simplical complexes. 
\subsubsection{Coxeter Complexes and Buildings}
\label{CCs and Buildings}
The special subgroups of a Coxeter system $(W,S)$ are the subgroups $\langle S' \rangle$ for some $S' \subseteq S$ and the special cosets are the cosets of theses subgroups. We can define the Coxeter complex of $(W,S)$, $\Sigma(W,S)$ or $\Sigma$ if context is clear,  as the poset of special cosets under the opposite of the inclusion relation. This poset is a simplicial complex (see \cite[Theorem 3.5]{AB10}). Moreover, it is labelable, which means that there is a map from $V(\Sigma)$ to some label set $I$ (for us $\{1, ..., |S|\}$) such that the vertices of every maximal simplex are in bijection with the elements of $I$ (note that this property is called colorable in \cite{AB10}).

Recall that we can represent a Coxeter system $(W,S)$ as a graph called the Coxeter diagram of $(W,S)$. In this paper, we are interested in the groups with the following Coxeter diagrams. 
\begin{equation}
\label{Coxeter Diagrams}
\begin{tikzpicture}[scale=1, baseline = -.1cm]
 \draw[fill=black] (-2,0) circle (1.5pt);
 \draw[fill=black] (-1,0) circle (1.5pt);
 \draw[fill=black] (0,0) circle (1.5pt);
 \draw[fill=black] (1,0) circle (1.5pt);
 \draw[fill=black](2,0) circle (1.5pt);
 \draw (-2,0) to (-1, 0);
 \draw (-1,0) to (0, 0);
 \draw[dotted] (0,0) to (1,0);
 \draw (1,0) to (2, 0);
 \node(none) at (-2,0.5){1};
 \node(none) at (-1,0.5){2};
 \node(none) at (0,0.5){3};
 \node(none) at (1,0.5){n-2};
 \node(none) at (2,0.5){n-1};
\end{tikzpicture}\,\,\,\,\,\,\,\,\,\,\,\,
\begin{tikzpicture}[scale=1, baseline = -.1cm]
 \draw[fill=black] (-2,0) circle (1.5pt);
 \draw[fill=black] (-1,0) circle (1.5pt);
 \draw[fill=black] (0,0) circle (1.5pt);
 \draw[fill=black] (1,0) circle (1.5pt);
 \draw[fill=black](2,0) circle (1.5pt);
 \draw[fill=black](0,-1) circle (1.5pt);
 \draw (-2,0) to (-1, 0);
 \draw (-1,0) to (0, 0);
 \draw[dotted] (0,0) to (1,0);
 \draw (1,0) to (2, 0);
 \draw (-2,0) to (0,-1);
 \draw (2,0) to (0, -1);
 \node(none) at (-2,0.5){1};
 \node(none) at (-1,0.5){2};
 \node(none) at (0,0.5){3};
 \node(none) at (1,0.5){n-2};
 \node(none) at (2,0.5){n-1};
 \node(none) at (0,-1.5){n};
\end{tikzpicture}
\end{equation}
These diagrams represent the Coxeter systems of type $A_{n-1}$ and  $\tilde{A}_{n-1}$ respectively. A group of type $A_{n-1}$ is isomorphic to the symmetric group $S_n$ and  $\tilde{A}_{n-1}$ is the corresponding affine, irreducible system. 

Also, recall that two simplicies in a simplical complex $\Sigma$ are joinable, if they have an upper bound in $\Sigma$. The link of a simplex $X$  in a simplicial complex $\Sigma$ (denoted $lk_{\Sigma}(X)$) is  the subcomplex of all simplices that are both disjoint from and joinable to $X$ (see \cite[Definition A.19]{AB10}). If $a$ is a vertex of $\Sigma$, then $lk_{\Sigma}(a)$ is the induced subcomplex with vertex set $V = \{b | b \text{ is connected by an edge to } a \text{ in } \Sigma\}$. If $lk_\Sigma(a)$ has finitely many vertices for every vertex $a \in \Sigma$, we call $\Sigma$ locally finite. It is well known that the link of any vertex in the $\tilde{A}_{n-1}$ Coxeter complex is isomorphic to the $A_{n-1}$ Coxeter complex. Additionally, the $A_{n-1}$ Coxeter complex is isomorphic to the flag complex of proper, non-empty subsets of $\{1,...,n\}$ with incidence defined by inclusion (the flag complex is the simplicial complex with these subsets as the vertices, and finite flags as simplices). This isomorphism is consistent with the canonical labelling of $\Sigma$ (i.e. vertices with the same label are matched to subsets of the same size). We will use these facts throughout this paper.

The $\tilde{A}_{n-1}$ Coxeter complex $\Sigma$ appears in the representation theory of the Lie algebra $sl_n$. In particular, the $sl_n$ coroot lattice is embedded in $\Sigma$ (see \cite[Section 10.1.8]{AB10}.  If $n = 3$, it is easy to see that as graphs, $\Sigma$ is isomorphic as a graph to the $sl_n$ weight lattice. We show in section \ref{degernate case section} the connection between the 1-skeleton of the $\tilde{A}_{n-1}$ Coxeter complex, the $sl_n$ weight lattice, and the Cayley graph of a distinguished group that we will define in that section.  A way to see this is by choosing a Weyl chamber as in \cite[Lecture 14]{FH04}. The collection of these Weyl chambers is isomorphic to a $A_{n-1}$ Coxeter complex (or more specifically, the flag complex of proper subsets of $\{1,...,n\}$ as previously defined). 

A building of type $\tilde{A}_{n-1}$ is a simplicial complex which is built out of Coxeter complexes, originally introduced by Tits \cite[Definiton 3.1]{Tits74}. We borrow the following definition from \cite[Defintion 4.1]{AB10}.
\begin{definition}
\label{buidling}
A building $\Delta$ is a simplicial complex  with a distinguished set of subcomplexes called apartments that satisfy the following properties:
\begin{enumerate}
    \item Each apartment is a Coxeter complex for some Coxeter system $(W,S)$.
    \item If $A$ and $B$ are simplices in $\Delta$, then there is some apartment $\Sigma_{AB}$ containing them both.  
    \item There is an isomorphism between any two apartments $\Sigma$ and $\Sigma'$ fixing the intersection $\Sigma \cap \Sigma'$. 
\end{enumerate}

\end{definition}
We call the collection $\mathcal{A}$ of apartments of $\Delta$ a system of apartments, and note that one building can be equipped with multiple possible systems of apartments. All apartments of a building must be Coxeter complexes of the same type (see \cite[Proposition 4.7]{AB10}). So we will say $\Delta$ is type $\tilde{A}_{n-1}$ if its apartments are of type $\tilde{A}_{n-1}$.  
\begin{example} \label{BuidlingExample}
\normalfont
(see \cite[Section 6.9]{AB10}) 

The canonical examples are the Bruhat-Tits buildings. Suppose $K$ is a field with a discrete valuation (e.g. The rational numbers with the $p$-adic valuation for some prime $p$, or its completion, the field of $p$-adic numbers), and $A$ is its valuation ring (i.e. $A = \{x \in K | v(X) \geq 0\}$). Recall that we can choose an element $\pi \in A$ with $v(\pi) = 1$ so that every $x \in K^{*}$ is of the form $\pi^{n}u$ for some $u \in A^{*}$ (Here  the valuation is normalized, so that it only takes integer values).

Now, consider the collection of $A$-lattices in $K^{n}$ (i.e, $Av_1 \oplus ... \oplus Av_n$ for some basis $v_1, ..., v_n$ of $K^{n}$). We can define an equivalence relation on lattices $L$ by saying that $L \equiv L'$ if and only if $L = \lambda L'$ for some scalar $\lambda \in K$. We can take the equivalence classes of this relation as the vertices of a graph. 

We define incidence in this graph in the following manner. Take two equivalence classes $\Lambda$ and $\Lambda'$ to be incident if there exist representatives $L \in \Lambda$ and $L' \in \Lambda'$ such that $\pi L \subset L' \subset L$ where $\pi$ is the distinguished element of $A$ we chose before. The flag complex of this graph is  a building of type $\tilde{A}_{n-1}$. 
\end{example} 
\begin{remark}
\normalfont
\label{BNpairs}
The construction of the previous buildings come from the theory of $BN$-pairs. Briefly, this method of creating buildings takes advantage of a group $G$ with subgroups $B$ and $N$ with a particular set of properties. So buildings discovered in this manner have a natural  transitive action of the group $G$ on the vertices. A more thorough treatment of $BN$-Pairs can be found in \cite[Section 6.2]{AB10}
\end{remark}

The labeling on each apartment $\Sigma$ previously discussed can be extended to a consistent labeling of the building $\Delta$ (i.e. a labeling that restricts to the labeling of the Coxeter complex on any apartment $\Sigma$). So a building is itself a labelable  chamber complex. Parallel to the situation with Coxeter complexes, it is well known that the link of a vertex $v_0$ in a building $\Delta$ of type $\tilde{A}_{n-1}$ is a building of type $A_{n-1}$. In order to describe $A_n$, we recall the following definition. 
\begin{definition}
\cite[pg. 127]{Moo07} A finite projective geometry is a incidence geometry satisfying the following, where we call $n$ the projective dimension of the geometry: 
\begin{itemize}
    \item There is a unique line through any two points. 
\item There exist three non-colinear points. 
\item Every line contains at least three points. 
\item A chain of non-empty subspaces has length at most $n+1$. 
\item Every line that is incident with two side of a triangle, and not with the vertices of the triangle, must be incident with the third side of the triangle.
\end{itemize}

\end{definition}
A locally finite building is a building where the link of every vertex is finite. Equipped with this terminology, we can state the following lemma due to Tits.  

\begin{lemma}\cite[Theorem 6.3]{Tits74}
\label{buildingFPG}
A locally finite building of type $A_{n -1}$ is isomorphic to a finite projective geometry of projective dimension $n - 1$.
\end{lemma}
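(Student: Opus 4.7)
The plan is to recognize $\Delta$ as the flag complex of subspaces in a finite projective geometry, using the canonical labeling of $\Delta$ to supply a ``dimension'' to each vertex and the apartment axioms to verify the incidence relations globally. Concretely, using the labeling $\lambda \colon V(\Delta) \to \{1, \ldots, n-1\}$ obtained by gluing the labelings on individual apartments, I would declare each vertex of $V_i := \lambda^{-1}(i)$ to be an $(i-1)$-dimensional projective subspace, with incidence defined as joinability in $\Delta$ (that is, spanning an edge in the simplicial complex).

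The central observation is that each apartment $\Sigma \subseteq \Delta$ is already isomorphic to the flag complex of proper non-empty subsets of $\{1, \ldots, n\}$, and inside such an apartment the axioms of a finite projective geometry reduce to trivial statements about subsets. Most of the axioms---unique line through two points, existence of three non-collinear points, the chain length bound of $n+1$---involve only finitely many simplices, and I would handle them by applying building axiom (2) to place all relevant simplices in a common apartment $\Sigma$, reading off the conclusion from the flag-complex model, and then using axiom (3) (the isomorphism between two apartments fixing their intersection) to rule out competing witnesses from other apartments. The ``at least three points on a line'' axiom is the first place where local finiteness plays a substantive role: in any single apartment a line meets only two points, so the remaining points must come from the thickness of the building, which I would extract by analyzing the link of a label-$2$ vertex.

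The main obstacle will be the Pasch axiom, since the configuration of a triangle together with a transversal line is not captured by a single simplex and therefore cannot be placed in a common apartment by axiom (2) alone. My plan here is to show that this configuration lies inside the link of a common label-$3$ vertex $P$ (the ``plane'' spanned by the triangle), and then to invoke the fact that this link is itself a rank-$2$ building of type $A_2$, that is, a projective plane, in which Pasch is essentially part of the definition. Establishing that all the data lies in such a common link requires iterated application of axioms (2) and (3) to glue several apartments along shared simplices, and this gluing step is the technical heart of Tits's argument.
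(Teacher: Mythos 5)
The paper offers no proof of this lemma: it is imported verbatim from Tits \cite[Theorem 6.3]{Tits74}, so there is no in-paper argument to compare against. That said, your overall architecture — read off dimensions from the canonical labeling, take joinability as incidence, use apartments (each isomorphic to the flag complex of proper non-empty subsets of $\{1,\dots,n\}$) as local models, and push the Pasch configuration into the link of a label-$3$ vertex, which is an $A_2$ residue and hence a projective plane — is the correct shape of Tits's argument.

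Two points need repair. First, you attribute the ``every line has at least three points'' axiom to local finiteness. Local finiteness only makes the geometry \emph{finite}; the third point on a line comes from \emph{thickness} (every panel lies in at least three chambers), a hypothesis the lemma as stated silently assumes — a thin locally finite building, i.e.\ a finite Coxeter complex, has exactly two points per line, which is precisely why the paper treats Coxeter complexes as degenerate geometries ``of order~1.'' Second, and more seriously, your uniqueness arguments lean on building axiom (3) in a way that does not close. If $L$ and $L'$ are two lines through the same points $p \neq q$, you can place $p,q,L$ in an apartment $\Sigma$ (by applying axiom (2) to the edges $\{p,L\}$ and $\{q,L\}$ — note $\{p,q\}$ itself is \emph{not} a simplex, since $p$ and $q$ carry the same label) and $p,q,L'$ in an apartment $\Sigma'$. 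The isomorphism $\Sigma \to \Sigma'$ fixing $\Sigma \cap \Sigma'$ fixes $p$ and $q$ and therefore carries $L$ to the unique line of $\Sigma'$ through them, namely $L'$; but since $L$ need not lie in $\Sigma \cap \Sigma'$, this does not force $L = L'$, so the competing witness is not ruled out. The same issue infects the existence of a \emph{unique} plane spanned by a non-collinear triple, which your link-based treatment of Pasch presupposes. Closing this requires an intrinsic characterization of the join of two vertices — via projections/the gate property of residues, convexity of apartment intersections, or Tits's retraction arguments — and supplying that device is the actual content of the theorem that your sketch leaves open.
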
 

The order of the building is defined as the order of the finite projective geometry arising as the link of (any) vertex. This gives us an interesting description of the relationship between Coxeter complexes and buildings. We recall that we can define the $q$-integer $[k] = \frac{q^{k}-q^{-k}}{q - q^{-1}}$ and $[k]!_q = [k]_q[k-1]_q...[2]_q[1]_q$ To see this, we first recall that in a finite projective geometry of algebraic dimension $n$ and order $q$, the number of subspaces of dimension $k$ is  $\bigl[\!\begin{smallmatrix} n\\ k \end{smallmatrix}\!\bigr]_q = \frac{[n]!_q}{[k]!_q[n-k]!_q}$ (see \cite[Page 121]{Moo07}).  If we set $q = 1$, this is simply equal to ${n \choose k}$. This is of course the number of subsets of size $k$ of an $n$ element set. So in the philosophy of \cite{Tits57}, a flag complex of proper non-empty subsets of $[n]$ can be thought of as a finite projective geometry of dimension $n$ over Tits' degenerate ``field of order 1". Reversing the logic, this suggests that we can think of general finite type $A_{n-1}$ buildings as $q$-analogues of the $A_{n-1}$ Coxeter complex.

Now we consider this in the affine case. Lemma \ref{buidling} and the facts stated we can say that $\tilde{A}_{n-1}$ Coxeter complexes are equivalent to $\tilde{A}_{n-1}$ buildings of ``order 1".
Since the $\tilde{A}_{n-1}$ Coxeter complex is isomorphic to the weight lattice of $sl_n$, we can say that a locally finite $\tilde{A}_{n-1}$ building is a $q$-analogue of this weight lattice. Another way to say this, is that the Coxeter complex is the degenerate building over the field of order 1 as imagined in \cite{Tits57}.

As we finish our discussion of how finite projective geometry ties into the theory of buildings, we state several facts that we will use in our proof of Lemma \ref{BuildingSS} and Theorem \ref{GPAFunctorThm}. Recall that in algebraic dimension $\geq 4$, these questions are problems in linear algebra, as we must only consider classical projective geometries. In algebraic dimension 3, we must turn to the theory of projective planes. Both of the facts below follow from the basic theory of projective planes (see \cite[Chapter 6]{Moo07} and in particular Theorem 6.3 for justification). Note that the projective dimension of the geomtries in these Lemmas is $n-1$. 

\begin{lemma}
\label{Number of K-dim subspaces}
In a finite projective geometry of algebraic dimension $n$ and order $q$, the number of subspaces of algebraic dimension $k$ is  $\bigl[\!\begin{smallmatrix} n\\ k \end{smallmatrix}\!\bigr]_q$
\end{lemma}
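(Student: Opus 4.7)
The plan is to prove the lemma by splitting into cases based on the algebraic dimension $n$, following the strategy hinted at in the surrounding discussion. The low-dimensional cases will be handled directly from the axioms of a projective plane, while the higher-dimensional case will be reduced to classical linear algebra over $\mathbb{F}_q$ via a coordinatization theorem.

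For $n \geq 4$, I would invoke the Veblen--Young classification (cf.\ \cite[Chapter 6]{Moo07}), which asserts that every projective geometry of projective dimension at least three is isomorphic to the lattice of subspaces of a left vector space over a division ring. Since the geometry is assumed finite of order $q$, the division ring is forced to be $\mathbb{F}_q$. Hence subspaces of algebraic dimension $k$ correspond bijectively to $k$-dimensional linear subspaces of $\mathbb{F}_q^n$, and the count is the standard exercise of dividing the number of ordered $k$-tuples of linearly independent vectors, namely $\prod_{i=0}^{k-1}(q^n - q^i)$, by the number of ordered bases of any fixed $k$-dimensional subspace, $\prod_{i=0}^{k-1}(q^k - q^i)$. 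Rewriting the resulting quotient in terms of the balanced $q$-integers $[m]_q = (q^m - q^{-m})/(q - q^{-1})$ yields $\bigl[\!\begin{smallmatrix} n \\ k \end{smallmatrix}\!\bigr]_q$ as claimed.

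For $n \leq 3$, I would argue directly. The cases $k = 0$ and $k = n$ are trivial, each contributing $1 = \bigl[\!\begin{smallmatrix} n \\ 0 \end{smallmatrix}\!\bigr]_q = \bigl[\!\begin{smallmatrix} n \\ n \end{smallmatrix}\!\bigr]_q$. For $n = 2$ the geometry is a single line with $q + 1 = \bigl[\!\begin{smallmatrix} 2 \\ 1 \end{smallmatrix}\!\bigr]_q$ points by definition of order. For $n = 3$ the geometry is a projective plane of order $q$, and a standard double-counting argument on incident point--line pairs using the axioms (each line contains $q+1$ points, each point lies on $q+1$ lines, and any two points determine a unique line) yields $q^2 + q + 1$ points and the same number of lines, matching $\bigl[\!\begin{smallmatrix} 3 \\ 1 \end{smallmatrix}\!\bigr]_q = \bigl[\!\begin{smallmatrix} 3 \\ 2 \end{smallmatrix}\!\bigr]_q$; this is precisely the content of Theorem 6.3 of \cite{Moo07}.

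The main obstacle is less a calculation than a bookkeeping one: one must verify that the axioms of "finite projective geometry" recalled at the start of the section are strong enough to trigger the Veblen--Young coordinatization in dimension $\geq 4$ (the last bullet point, which is a form of the Pasch/Veblen axiom, is what guarantees this), and one must keep the two conventions for dimension straight (projective dimension $n-1$ versus algebraic dimension $n$). Once these points are squared away the proof is a routine assembly of a textbook counting argument with a coordinatization black box.
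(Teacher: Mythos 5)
Your proof is correct and is exactly the justification the paper itself gestures at: the paper does not write out a proof of this lemma, instead citing the classical theory (coordinatization over a finite field in algebraic dimension $\geq 4$, and \cite[Theorem 6.3]{Moo07} for projective planes in dimension $3$), which is precisely the case split you carry out, so there is nothing to compare beyond your having filled in the standard details. The one wrinkle is your last step: the quotient $\prod_{i=0}^{k-1}(q^{n}-q^{i})\big/\prod_{i=0}^{k-1}(q^{k}-q^{i})$ is the \emph{unbalanced} Gaussian binomial $\prod_{i=1}^{k}\frac{q^{n-k+i}-1}{q^{i}-1}$, which is not literally equal to $\frac{[n]!_q}{[k]!_q[n-k]!_q}$ computed with the paper's balanced $q$-integers $[m]_q=(q^{m}-q^{-m})/(q-q^{-1})$ (already for $n=2$, $k=1$ one gets $q+1$ versus $q+q^{-1}$), so the claimed ``rewriting'' is not an identity of rational functions in $q$. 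This conflation is inherited from the paper's own statement of the lemma and is harmless everywhere the lemma is actually invoked, since there one works in a field of characteristic $p$ with $q\equiv 1 \bmod p$, where both expressions reduce to the ordinary binomial coefficient $\binom{n}{k}$; it would still be worth a sentence acknowledging that the identification is only valid in that setting.
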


\begin{lemma}
\label{Number of K-dim subspaces with fixed M dim subspace}
In a finite projective geometry of algebraic dimension $n$ and order $q$, the number of subspaces of algebraic dimension $k$ containing some fixed $m$-dimensional subspace  is  $\bigl[\!\begin{smallmatrix} n-m\\ k-m \end{smallmatrix}\!\bigr]_q$
\end{lemma}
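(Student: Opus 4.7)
The plan is to reduce the count to Lemma \ref{Number of K-dim subspaces} by passing to a quotient geometry. Fix the $m$-dimensional subspace $U$ inside the finite projective geometry $P$ of algebraic dimension $n$ and order $q$. The subspaces $V$ of $P$ that contain $U$ form, in a natural way, the subspace lattice of a finite projective geometry $P/U$ of algebraic dimension $n-m$: the points of $P/U$ are taken to be the $(m+1)$-dimensional subspaces of $P$ containing $U$, with incidence inherited from $P$. Under this identification, a $k$-dimensional subspace of $P$ containing $U$ corresponds precisely to a $(k-m)$-dimensional subspace of $P/U$.

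Granting this quotient description, Lemma \ref{Number of K-dim subspaces} applied to $P/U$ (a finite projective geometry of algebraic dimension $n-m$ and order $q$) immediately yields the claimed count $\bigl[\!\begin{smallmatrix} n-m\\ k-m \end{smallmatrix}\!\bigr]_q$. Edge cases check out: when $m=0$ we recover Lemma \ref{Number of K-dim subspaces} directly, and when $k=n$ (so $k-m=n-m$) the formula evaluates to $1$, corresponding to the unique subspace $P$ itself.

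The one step needing justification is that $P/U$ really is a finite projective geometry of the same order $q$. In algebraic dimension $n-m \geq 4$ this is routine: by the Veblen--Young theorem, $P$ is classical and arises from an $n$-dimensional vector space over $\mathbb{F}_q$, so the subspaces of $P$ containing $U$ correspond bijectively to subspaces of an $(n-m)$-dimensional quotient vector space over $\mathbb{F}_q$, and everything reduces to linear algebra. The main obstacle is the low-dimensional case (projective planes and below), where the Veblen--Young classification is unavailable and one must verify the axioms for $P/U$ directly using the theory of projective planes; this is precisely the content of \cite[Chapter 6, Theorem 6.3]{Moo07} which the author invokes just before the statement, so I would simply cite that reference to close the low-dimensional case.
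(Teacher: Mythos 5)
Your argument is correct, but it is worth noting that the paper does not actually prove this lemma: it is stated as a standing fact and justified only by the citation to \cite[Chapter 6]{Moo07}, so there is no in-paper proof to compare against line by line. What you supply is the standard argument that the citation encapsulates, and it is sound: the interval above a fixed $m$-dimensional subspace $U$ in the subspace lattice is the quotient geometry $P/U$ of algebraic dimension $n-m$ and the same order $q$ (the points on a line of $P/U$ are the $(m+1)$-spaces between $U$ and a fixed $(m+2)$-space, of which there are $q+1$), and then Lemma \ref{Number of K-dim subspaces} applied to $P/U$ gives $\bigl[\!\begin{smallmatrix} n-m\\ k-m \end{smallmatrix}\!\bigr]_q$. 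Your split into cases is also the right one and matches the paper's own remark that in algebraic dimension $\geq 4$ everything is classical (Veblen--Young) and reduces to counting subspaces of $\mathbb{F}_q$-vector spaces, while in the plane case the only nontrivial instance is counting the $q+1$ lines through a point, which is one of the basic incidence facts for projective planes covered by the cited reference. So your proposal is a correct, self-contained justification of a statement the paper leaves as a black box; the only thing I would tighten is to say explicitly that in the low-dimensional case the quotient construction degenerates to counting lines through a point (or the trivial $m=0$, $m=n$ cases), so no general quotient machinery is actually needed there.
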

Finally, we note that for subspaces $V$ and $W$ of a vector space $U$, we have $dim(V+W) = dim(V) + dim(W) - dim(V \cap W)$. This fact also holds for projective planes, and is easy to see by inspection of the definition. 
\subsection{The Graph Planar Algebra $G(\Delta)$}
\label{GPAsection}
We can view the 1 skeleton  of a type $\tilde{A}_{n-1}$ building $ \Delta$ as a graph which we call $\Gamma_{\Delta}$, where we define the vertex and edge sets of $\Gamma_{\Delta}$ as the $0$ and 1 simplices of $\Delta$ respectively. This graph is undirected, but we can easily view it as as directed graph by replacing each edge with two directed edges with opposite sources and targets. Recall that every building has a labeling that is consistent with the labeling of its apartments. So we can label $V(\Gamma_{\Delta})$ by the elements of the set $[n]$ in a way that is consistent with  the labeling on $\Delta$ (where a vertex $v$ has label $\ell(v)$) . We use this vertex labeling to define a labeling on $E(\Gamma_{\Delta})$ by labeling an edge from $x$ to $y$ with $\ell(x) - \ell(y) \mod n$. We see then that if the edge $x \to y$ has label $k$, then edge $y \to x$ has label $n - k \text{ mod } n$. We will abuse notation slightly by using $\Gamma_\Delta$ to refer to this directed graph as well. In fact, we will almost exclusively use the directed version of $\Gamma_{\Delta}$. 

\begin{remark}
\label{link as subspaces}
\normalfont
Recall that for any vertex $v_0$ in $\Delta$ a $\tilde{A}_{n-1}$ building of order $q$, that $lk_\Delta(v_0)$ is isomorphic to a $A_{n-1}$ building. This is in turn isomorphic to a finite projective geometry of order $q$. So we can choose to identify every vertex in $lk_\Delta(v_0)$ (or equivalently all the edges in $\Gamma_{\Delta}$ originating at $v_0$) with a proper subspace of an $n$ dimensional vector space in a way that the label of an edge in $\Gamma_\Delta$ will be equal to with the dimension of the subspace to which it is identified (see the proof of \cite[Theorem 6.3]{Tits74}). Incidence in the projective geometry corresponds to inclusion of subspaces. So a cycle $v_0 \to x \to y \to v_0$ exists in $\Gamma_\Delta$ if and only if one of the subspaces associated to $x$ and $y$ is contained in the other. 
\end{remark}
Now, equipped with an encoding of any $\tilde{A}_{n-1}$ building as a directed, edge-labeled graph, we make the following definitions. 
\begin{definition} 
\label{pathtype} 
If $p$ is a path in $\Gamma_{\Delta}$ with edges $e_1, ..., e_k$, then the type of $p$ (notation $type(p)$), is the tuple of length $k$ where $(type(p))_i = \ell(e_i)$. 
\end{definition}
\begin{definition}
\label{GPA}
For a graph $\Gamma$ whose edges are labelled by natural numbers, we define the category $G(\Gamma)$ over a field $\Bbbk$ as the category whose: 
\begin{itemize}
\item Objects are finite sequences  of natural numbers (selected from the edge labels of $\Gamma$). 
\item A morphism between objects $\sigma$ and $\tau$ is a linear functional $$f: \Bbbk(\{(p_1, p_2) |\,\, type(p_1) = \sigma, \,\, type(p_2) = \tau\}) \rightarrow \Bbbk$$ where $f((p_1, p_2)) = 0$ unless  the starting and ending points of $p_1$ and $p_2$ coincide.
\item A composition of morphisms $f: \sigma \rightarrow \tau$ and $g: \tau \rightarrow \omega$ is defined on a matched pair $(p_1, p_2)$ of type $(\sigma, \omega)$ as follows:  
\begin{equation} \label{gpa composition}(g \circ f)((p_1, p_2)) = \sum_{p' \in P'} f((p_1, p')) * g((p', p_2))\end{equation} where $P'$ is the set of paths of type $\tau$ whose starting and ending vertices coincide with $p_1$ and $p_2$. 
\end{itemize}
This category is a strict monoidal category. The monoidal product acts in the following way
\begin{itemize}
\item For sequences $\sigma$ and $\tau$, $\sigma \otimes \tau = (\sigma_1, ..., \sigma_k, \tau_1, ..., \tau_{m})$. \item For morphsims $f: \sigma \to  \tau$ and $g: \omega \to  \mu$, $$(f \otimes g) ((p_1 \otimes q_1, p_2 \otimes q_2))= f((p_1, p_2)) * g((q_1, q_2))$$. 
\end{itemize}
The monoidal unit is the empty sequence. 
\end{definition}
The main focus of our discussion will be $G(\Gamma_{\Delta})$ for $\Delta$ a locally finite building of type $\tilde{A}_{n-1}$. We will use the short hand $G(\Delta)$ for this category. The categories we have described above are instances of \textit{graph planar algebras} originally introduced by V.F.R. Jones in the context of subfactors (see \cite{Jones98}, \cite{Jones99} and \cite{Mor10}) Our version is slightly different but very similar in spirit.

\begin{remark}
\label{gpa from paper}
\normalfont
Looking to the work in \cite{Mor10} we see that $G(\Delta)$ follows the authors' definition of a graph planar algebra in the following way. Let $G$ be the following graph 
$$ \begin{tikzpicture}[scale=1, baseline = -.1cm]
\node   (a)  {$v_0$};
\path[scale=2] 
        (a) edge [loop right, "1"]
        (a) edge [out=-30, in=-60, distance=5mm, ->, "2"] (a)
        (a) edge [loop above, "n-1"]
        (a) edge [out= 60, in= 30, distance=5mm, ->, "n"] (a);
\draw [densely dotted]  (285:1.5em) arc (285:120:1.5em);
    \end{tikzpicture}$$
 Let $\pi: \Gamma_\Delta \to G$ be the homomorphism sending every element in $V(\Gamma_{\Delta})$ to the single vertex $v_0$ in $G$ and every edge with label $i$ in $E(\Gamma_{\Delta})$ to the edge in $G$ labeled $i$. Now, we can use the structure of $\tilde{A}_{n-1}$ buildings to see that choosing $\delta_k = \bigl[\!\begin{smallmatrix} n\\ k\end{smallmatrix}\!\bigr]_q$ and $d(v) = 1$ for all $v \in V(\Delta_\Gamma)$ gives us Perron-Frobenius data for this homomorphism.
 \end{remark} 
Notice that $\Gamma_\Delta$ is not finite, and thus does not exactly satisfy \cite{Mor10}'s definition of a bidirected graph, but that this construction only requires $\Gamma_\Delta$ to be locally finite, which it is. We then see that $G(\pi)$ as defined in (\cite{Mor10}, definition 2.5) is equivalent to our previously defined $G(\Delta)$. Because of this connection, we will often refer to $G(\Delta)$ as the graph planar algebra of $\Delta$.

Finally, we introduce the following definition which will help us discuss this category locally. 

\begin{definition}
\label{composition labelling}
For a list of composable morphisms $f_1, f_2,…,  f_k$ in $G(\Delta)$ where $f_i: \sigma_i \rightarrow \sigma_{i+1}$, a labeling is a specific choice of paths of types $\sigma_1, …, \sigma_n$ with common initial and final vertices. For example each choice of $p’$ in \ref{gpa composition} gives us a labeling $(p_1, p’, p_2)$ of $f, g$.
\end{definition}

Notice that we can use this idea to reword the definition of composition in  definition \ref{GPA} by saying that we can compose a list of morphisms $f_1, f_2,…,f_k$ by summing over all labelings of this list and taking the product of values in each labeling. For a specific evaluation of this map (i.e. $f_1 \circ f_2  \circ f_ {k}(p_1, p_2)$), we will consider only labelings starting with $p_1$ and ending with $p_2$. Also, if we have a morphism that is the sum of multiple web pictures, we can evaluate it by independently summing over labelings of each summand and then adding the values together.

\subsection{The Category $Web(SL_n^{-})$}
\label{webSLn}
In this section, we introduce the category $Web(SL_n^{-})$ as an an example of a class of categories often called ``web categories." (first found in \cite{Kup96}). These categories, described by diagrammatic generators and relations, derive their name from their morphisms, as complicated compositions in these category have pictorial descriptions resembling spiderwebs. Web categories have a connection to representation theory that we will explore in subsection \ref{web context}. 
\\
\\
$Web(SL_n^{-})$ was introduced in \cite{Jones21} as an extension of the categories described in \cite{BEAEO20}. These are closely related to the $Sl_n$ web categories originally introduced in \cite{CKM14}, though our categories do not have a chosen pivotal structure. Formally, the category $Web(SL_n^{-})$ is the category whose objects are finite sequences in $[n]$ and whose morphisms are generated by the following diagrams (called webs):

$$\begin{tikzpicture}[scale=1, baseline = -.1cm]
\draw (0,0.5) to (0,0); 
\draw (-0.3, -0.5) to (0,0);
\draw (0.3, -0.5) to (0, 0);
\node(none) at (0, 0.7) {j + k};
\node(none) at (-0.3, -0.7){j};
\node(none) at (0.3, -0.7){k};
\end{tikzpicture} \text { and } \begin{tikzpicture}[scale=1, baseline = -.1cm]
\draw (0,-0.5) to (0,0); 
\draw (-0.3, 0.5) to (0,0);
\draw (0.3, 0.5) to (0, 0);
\node(none) at (0, -0.7) {j + k};
\node(none) at (-0.3, 0.7){j};
\node(none) at (0.3, 0.7){k};
\end{tikzpicture} \text{ and } \begin{tikzpicture}[scale=1, baseline = -.1cm]
\draw (0, 0.3) to (0,-0.4);
\node(none) at (0, -0.6){n};
\mydotw{(90:0.35)};
\end{tikzpicture} \text { and } \begin{tikzpicture}[scale=1, baseline = -.1cm]
\draw (0, 0.3) to (0,-0.4);
\node(none) at (0, 0.45){n};
\mydotw{(270:0.5)};
\end{tikzpicture}  $$ 
Here, $j$ and $k$ can be any natural number labels, such that $j + k \leq n$. If $j +k > n$, then the morphism does not exist. The collection of morphisms in this category is just the collection of formal $\Bbbk$ linear combinations of horizontal and vertical compositions of these generating morphisms. The morphsims satisfy the following relations.  
\begin{equation}
\label{associativity}
\begin{tikzpicture}[scale=1, baseline = -.1cm]
\draw(-1, 1) to (0,0);
\draw(1,1) to (0,0);
\draw(0,0) to (0, -1);
\draw (0, 1) to (-0.5, 0.5);
\node(none) at (0, -1.2) {$j+k+\ell$};
\node(none) at (-1, 1.2) {$j$};
\node(none) at (0, 1.2) {$k$};
\node(none) at (1, 1.2) {$\ell$};
\end{tikzpicture} = \begin{tikzpicture}[scale=1, baseline = -.1cm]
\draw(-1, 1) to (0,0);
\draw(1,1) to (0,0);
\draw(0,0) to (0, -1);
\draw (0, 1) to (0.5, 0.5);
\node(none) at (0, -1.2) {$j+k+\ell$};
\node(none) at (-1, 1.2) {$j$};
\node(none) at (0, 1.2) {$k$};
\node(none) at (1, 1.2) {$\ell$};
\end{tikzpicture} \text{ and } \begin{tikzpicture}[scale=1, baseline = -.1cm]
\draw(-1, -1) to (0,0);
\draw(1,-1) to (0,0);
\draw(0,0) to (0, 1);
\draw (0, -1) to (-0.5, -0.5);
\node(none) at (0, 1.2) {$j+k+\ell$};
\node(none) at (-1, -1.2) {$j$};
\node(none) at (0, -1.2) {$k$};
\node(none) at (1, -1.2) {$\ell$}; \end{tikzpicture} = \begin{tikzpicture}[scale=1, baseline = -.1cm]
\draw(-1, -1) to (0,0);
\draw(1,-1) to (0,0);
\draw(0,0) to (0, 1);
\draw (0, -1) to (0.5, -0.5);
\node(none) at (0, 1.2) {$j+k+\ell$};
\node(none) at (-1, -1.2) {$j$};
\node(none) at (0, -1.2) {$k$};
\node(none) at (1, -1.2) {$\ell$}; \end{tikzpicture} \end{equation}

\begin{equation} \label{bigon bursting}\begin{tikzpicture}[scale=1, baseline = -.1cm]
\draw (0,-1)node[below]{j+k} -- (0,-0.5) ;
\draw (270:0.5) arc (270:90:0.5);
\draw(-90:0.5) arc (-90:90:0.5);
\draw(0,0.5)--(0,1) node[above]{j+k};
\node(none) at (1,0) {k};
\node(none) at (-1,0) {j};
\end{tikzpicture}  = {j +k \choose j} \begin{tikzpicture}[scale=1, baseline = -.1cm]
\draw (0,-1) to (0,1);
\node(none) at (0, -1.2) {j+k};
\end{tikzpicture}\end{equation}

\begin{equation} \label{square switch 1} \begin{tikzpicture}[scale=1, baseline = -.1cm]
\draw (-0.5,-1) -- (-0.5, 1) ;
\draw (0.5, -1) -- (0.5, 1) ;
\draw (-0.5, -0.6) -- (0.5, -0.2);
\draw (-0.5, 0.6) -- (0.5, 0.2);
\node(none) at (0, 0.6) {j};
\node(none) at (0, -0.6) {k};
\node(none) at (-0.5,-1.2) {m};
\node(none) at (0.5,-1.2) {$\ell$};
\end{tikzpicture} = \sum_{t}{m-\ell+j-k \choose t}\begin{tikzpicture}[scale=1, baseline = -.1cm]
\draw (-0.5,-1) -- (-0.5, 1) ;
\draw (0.5, -1) -- (0.5, 1) ;
\draw (-0.5, -0.2) -- (0.5, -0.6);
\draw (-0.5, 0.2) -- (0.5, 0.6);
\node(none) at (0, 0.65) {k - t};
\node(none) at (0, -0.65) {j- t};
\node(none) at (-0.5,-1.2) {m};
\node(none) at (0.5,-1.2) {$\ell$};
\end{tikzpicture}\end{equation}

\begin{equation}
\label{square switch 2}
\begin{tikzpicture}[scale=1, baseline = -.1cm]
\draw (-0.5,-1) -- (-0.5, 1) ;
\draw (0.5, -1) -- (0.5, 1) ;
\draw (-0.5, -0.2) -- (0.5, -0.6);
\draw (-0.5, 0.2) -- (0.5, 0.6);
\node(none) at (0, 0.6) {j};
\node(none) at (0, -0.6) {k};

\node(none) at (-0.5,-1.2) {m};
\node(none) at (0.5,-1.2) {$\ell$};

\end{tikzpicture} = \sum_{t}{\ell - m+k -j \choose t}\begin{tikzpicture}[scale=1, baseline = -.1cm]
\draw (-0.5,-1) -- (-0.5, 1) ;
\draw (0.5, -1) -- (0.5, 1) ;
\draw (-0.5, -0.6) -- (0.5, -0.2);
\draw (-0.5, 0.6) -- (0.5, 0.2);
\node(none) at (0, 0.65) {k - t};
\node(none) at (0, -0.65) {j - t};
\node(none) at (-0.5,-1.2) {m};
\node(none) at (0.5,-1.2) {$\ell$};
\end{tikzpicture}\end{equation}

\begin{equation}
\label{lollipop relations}
\begin{tikzpicture}[scale=1, baseline = -.1cm]
\draw (0,-0.5) to (0, 0.5);
\node(none) at (0, -0.7) {n};
\end{tikzpicture} 
= \begin{tikzpicture}[scale=1, baseline = -.1cm]
\draw (0,0.5) to (0, 0.1);
\draw (0,-0.5) to (0, -0.1);
\mydotw{(270:0.1)};
\mydotw{(90:0.1)};
\node(none) at (0, -0.7) {n};
\end{tikzpicture} \text {  and  } \begin{tikzpicture}[scale=1, baseline = -.1cm]
\draw (0,-0.5) to (0, 0.5);
\mydotw{(270:0.5)};
\mydotw{(90:0.5)};
\end{tikzpicture}  = \begin{tikzpicture}[scale=1, baseline = -.1cm]
\draw[dotted] (270:0.3) arc (270:90:0.3);
\draw[dotted] (-90:0.3) arc (-90:90:0.3);
\end{tikzpicture}  \end{equation}

\begin{equation}
\label{SLn relations} \begin{tikzpicture}[scale=1, baseline = -.1cm]
\draw (0, -1) to (0, 1);
\node(none) at (0, -1.2) {m};
\end{tikzpicture} = \begin{tikzpicture}[scale=1, baseline = -.1cm]
\draw (-0.5, -1) to (-0.5, 0.8);
\draw (0.5, -0.8) to (0.5, 1);
\draw (-0.5, 0.4) to (0.5, -0.4);
\node(none) at (-0.5, -1.2) {m};
\node(none) at (0.5, 1.2) {m};
\node(none) at (0, 0.4) {n-m};
 \mydotw{(125:0.9)};
 \mydotw{(305:0.9)};
\end{tikzpicture}  = \begin{tikzpicture}[scale=1, baseline = -.1cm]
\draw (-0.5, -0.8) to (-0.5, 1);
\draw (0.5, -1) to (0.5, 0.8);
\draw (-0.5, -0.4) to (0.5, 0.4);
\node(none) at (0.5, -1.2) {m};
\node(none) at (-0.5, 1.2) {m};
\node(none) at (0, 0.4) {n-m};
 \mydotw{(55:0.9)};
 \mydotw{(235:0.9)};
\end{tikzpicture}\end{equation}
We will call relation \ref{associativity} the associativity and coassociativity relation, relation \ref{bigon bursting} the bigon bursting relation and relations \ref{square switch 1} and \ref{square switch 2} the square switch relations. We define a tensor product on objects of $Web(SL_n^{-})$ as concatenation of lists and on morphisms as horizontial composition of pictures. This construction makes $Web(SL_n^{-})$ a strict monoidal category (see \cite{EGNO15} for background information on monoidal categories). Futhermore, we see that the last relation for morphism  shows that we have duals in $Web(SL_n^{-})$. So $Web(SL_n^{-})$ is also a rigid monoidal category. 

\subsubsection{$Web(SL_n^{-})$ in Context}
\label{web context}
 Recall that a quotient of a monoidal category $C$ is a dominant monoidal functor $C \rightarrow D$ for a monoidal category $D$. One way to derive $Web(SL_n^{-})$ is as a quotient of a larger web category $PolyWeb(GL_n)$.  In fact, this is how \cite{Jones21} presents this category. 

The theory of web categories gives us a pictorial way to describe categories of representations. For example, the larger category $PolyWeb(GL_n)$ is equivalent to the category of polynomial representations of $GL_n$ over an algebraically closed field.  When $n = 2k + 1$, we have that $Web(SL_n^{-})$ is equivalent to a web category  $Web(SL_n^{+})$, with almost identical structure (the major distinction between these categories are the coefficents attached to relation 6). If we take these categories over an algebraically closed field $\Bbbk$, we have that the Karoubi envelope  of $Web(SL_n^{-})$ is equivalent to $Tilt(SL_n)$, the category of titling modules for $SL_n$ (see \cite[Remark 3.4]{Jones21}). As a result (and because the Karoubi completion is categorical \cite[Theorem 3.21]{GMPPS23},  functors from $Tilt(SL_{2k+1})$ are completely described by functors out of $Web(SL_{2k+1}^{-})$, where we must simply define images for all generating morphisms that satisfy the $Web(SL_n^{-})$ relations.

In characteristic 0, $Tilt(SL_n)$ is equivalent to the category $Rep(SL_n)$, but in positive characteristic it is not. Instead, it is simply a full subcategory of $Rep(SL_n)$. In this setting, $Tilt(SL_n)$ can be characterized as the subcategory of $Rep(SL_n)$ whose behavior mimics that of $Rep(SL_n)$ in characteristic 0. Additionally, $Rep(SL_n)$ is the Abelian envelope of $Tilt(SL_n)$ \cite{CEOP21}. 

When $n$ is even, we believe that $Web(SL_n^{-})$ corresponds to tilting modules for a $q = -1$ deformation of $SL_n$. For example, when $n = 2$, we have that $Web(SL_2^{-}) \simeq TLJ(2)$, the Temperley-Lieb-Jones category with loop parameter $\delta = 2$ (For more on this example, see \cite{Jones21}). 

\section{Embedding $Web(SL_n^{-})$ in $G(\Delta)$}
\label{functorsection}
Our goal for this section is to define a functor from $Web(SL_n^{-})$ to $G(\Delta)$ for any locally finite type $\tilde{A}_{n-1}$ building $\Delta$. We will do this by specifying the images of the generating maps in $Web(SL_n^{-}))$. We will then explore what it would mean to have these images satisfy the $Web(SL_n^{-})$ relations, and introduce the language we will use to ultimately prove the existence of this functor. 
\subsection{Defining the Maps}
\label{FunctorDef}
Recall, the generating maps of the category $Web(SL_n^{-})$ are: $$\begin{tikzpicture}[scale=1, baseline = -.1cm]
\draw (0,0.5) to (0,0); 
\draw (-0.3, -0.5) to (0,0);
\draw (0.3, -0.5) to (0, 0);
\node(none) at (0, 0.7) {j + k};
\node(none) at (-0.3, -0.7){j};
\node(none) at (0.3, -0.7){k};
\end{tikzpicture} \text { and } \begin{tikzpicture}[scale=1, baseline = -.1cm]
\draw (0,-0.5) to (0,0); 
\draw (-0.3, 0.5) to (0,0);
\draw (0.3, 0.5) to (0, 0);
\node(none) at (0, -0.7) {j + k};
\node(none) at (-0.3, 0.7){j};
\node(none) at (0.3, 0.7){k};
\end{tikzpicture} \text{ and } \begin{tikzpicture}[scale=1, baseline = -.1cm]
\draw (0, 0.3) to (0,-0.4);
\node(none) at (0, -0.6){n};
\mydotw{(90:0.35)};
\end{tikzpicture} \text { and } \begin{tikzpicture}[scale=1, baseline = -.1cm]
\draw (0, 0.3) to (0,-0.4);
\node(none) at (0, 0.45){n};
\mydotw{(270:0.5)};
\end{tikzpicture}  $$ 
Now, notice that in both $Web(SL_n^{-})$ and $G(\Delta)$ the objects are sequences of integers in $\{1,..., n\}$. We map a sequence of integers in $Web(SL_n^{-})$ to the corresponding sequence $\mod n$ in $G(\Delta)$. This allows us to identify $n \in Web(SL_n^{-})$ with 0 in $G(\Delta)$. We now  define the images of the generating maps in $G(\Delta)$
$$\text{For } j+k < n, \begin{tikzpicture}[scale=1, baseline = -.1cm]
\draw (0,0.5) to (0,0); 
\draw (-0.3, -0.5) to (0,0);
\draw (0.3,-0.5) to (0, 0);
\node(none) at (0, 0.7) {j + k};
\node(none) at (-0.3, -0.7){j};
\node(none) at (0.3, -0.7){k};
\end{tikzpicture}  \mapsto \mathbbm{1}_{j,k}: \Bbbk(\{(p_1, p_2) |\,\,\, type(p_1) = (j,k) \text{ and } type(p_2) = (j+k)\}) \rightarrow \Bbbk$$
$$\begin{tikzpicture}[scale=1, baseline = -.1cm]
\draw (0,0.5) to (0,0); 
\draw (-0.3, -0.5) to (0,0);
\draw (0.3,-0.5) to (0, 0);
\node(none) at (0, 0.7) {n};
\node(none) at (-0.3, -0.7){j};
\node(none) at (0.3, -0.7){n-j};
\end{tikzpicture} \mapsto \mathbbm{1}_{j,n-j}: \Bbbk(\{(p_1, p_2) | \,\,\, type(p_1) = (j,n-j) \text{ and } type(p_2) = \emptyset\})
$$
$$\text{For } j+k < n,\begin{tikzpicture}[scale=1, baseline = -.1cm]
\draw (0,-0.5) to (0,0); 
\draw (-0.3, 0.5) to (0,0);
\draw (0.3, 0.5) to (0, 0);
\node(none) at (0, -0.7) {j + k};
\node(none) at (-0.3, 0.7){j};
\node(none) at (0.3, 0.7){k};
\end{tikzpicture} \mapsto \mathbbm{1}_{j + k}: \Bbbk(\{(p_1, p_2) | type(p_1) = (j+k) \text{ and } type(p_2) = (j,k)\}) \rightarrow \Bbbk$$
$$\begin{tikzpicture}[scale=1, baseline = -.1cm]
\draw (0,-0.5) to (0,0); 
\draw (-0.3, 0.5) to (0,0);
\draw (0.3, 0.5) to (0, 0);
\node(none) at (0, -0.7) {n};
\node(none) at (-0.3, 0.7){j};
\node(none) at (0.3, 0.7){n-j}; 
\end{tikzpicture} \mapsto \mathbbm{1}_{\emptyset}: \Bbbk(\{(p_1, p_2) | \,\,\, type(p_1) =\emptyset \text{ and } type(p_2) =  (j,n-j)\})$$
$$
\begin{tikzpicture}[scale=1, baseline = -.1cm]
\draw (0, 0.3) to (0,-0.4);
\node(none) at (0, -0.5){n};
\mydotw{(90:0.35)};
\end{tikzpicture} \mapsto \mathbbm{1}_{\emptyset, \emptyset}: \Bbbk(\{(p_1, p_2 | \,\, type(p_1) = (\emptyset) \text{ and } type(p_2) = \emptyset\}) = \Bbbk \rightarrow \Bbbk$$
$$\begin{tikzpicture}[scale=1, baseline = -.1cm]
\draw (0, 0) to (0,-0.7);
\node(none) at (0, 0.2){n};
\mydotw{(270:0.7)};
\end{tikzpicture} \mapsto \mathbbm{1}_{\emptyset, \emptyset}: \Bbbk(\{(p_1, p_2 | \,\, type(p_1) = \emptyset \text{ and } type(p_2) = (\emptyset)\}) = \Bbbk \rightarrow \Bbbk$$
Here, $\emptyset$ represents the path with no edges. We must use this when we have a label $n$, since we do not have an edge label $n \equiv 0$ in $\Gamma_\Delta$ (this comes from the fact that the two distinct vertices of the same type cannot be connected in the $\tilde{A}_{n-1}$ Coxeter complex). The map $\mathbbm{1}_{j, k}$ evaluates to 1 on a matched pair $(p,q)$, if $pq$, the concatenation of $p$ and $q$ is a cycle of length 2 or 3, and 0 otherwise (for $\mathbbm{1}_{\emptyset, \emptyset}$, this simply evaluates to 1 on every vertex). Similarly, the other maps defined also evaluate to $1$ on every triangle and 0 elsewhere. To show that this construction indeed gives us a functor $Web(SL_n^{-}) \rightarrow G(\Delta)$, we must show that the maps defined above satisfy the $Web(SL_n^{-})$ relations.

Since the $Web(SL_n^{-})$ relations are pictorial, in order to show that the maps we have chosen in $G(\Delta)$ satisfy these relations we wish to have a pictorial understanding of them as well. To do this, we notice that the generator of $Web(SL_n^{-})$ correspond to triangles in $\Gamma_\Delta$. Specifically, each string in one of these generator represents an edge whose label matches that of the string. As a result, each region bounded by two or more strings can be labeled with a vertex of $\Gamma_\Delta$. In order for a composition to have a valid labeling as defined in \ref{composition labelling} we must have a collection of compatible triangles in $\Gamma_\Delta$ (by compatible, we mean triangles that share an edge where two $Web(SL_n^{-})$ generators are composed). Also, note that the maps $\mathbbm{1}_{j,k},\mathbbm{1}_{j+k},\mathbbm{1}_{j,n-j}, \mathbbm{1}_{\empty}$ take the values 0 or 1. Any composition of these maps is a function that simply counts the number of triangle arrangements that satisfy this composition. Of course, as before, evaluating a map at a specific pair of paths fixes the initial and final paths in a labeling. See that this fixes some vertices in our triangle arrangement as well (If we draw this arrangement of triangles on top of our pictorial composition, these fixed vertices will appear in the boundary regions of our picture).

\subsection{The Degenerate Case}
\label{degernate case section}
To aid in our proof of the existence of the desired functor for all $\tilde{A}_{n-1}$ buildings, we explore the special case where $q = 1$. In this case, note that as previously discussed we can treat a collection of finite subsets of a set of size $n$ as a finite projective geometry of order 1. We will now show in detail, the connection between the $sl_n$ weight lattice and the $\tilde{A}_{n-1}$ Coxeter complex. We make the following definition to provide an alternative presentation of the weight lattice. This definition will help us to prove the existence of the functor we defined in the previous section. 
\begin{definition}
\label{degenerateTP}

 \cite[Example 2.11]{Jones21} For some natural number $n$, let $x = \{1,...,n\}$, $S$ be the collection of proper nonempty subsets of $X$ and define a function $\sigma: S \rightarrow S$ where $\sigma(x) = X \setminus x$ for all $x \subset X$. We can partition $X$ into subsets $\Pi_i$  for $1 \leq i \leq n-1$, where $\Pi_m$ is the set of subsets of size $m$.  Then if we set $$ T_1 = \{(A, B,C) \in S \times S \times S | A,B,C \text{ are pairwise disjoint, and } A \cup B \cup C = X\}$$ $$ T_2 = \{(A,B,C) \in S \times S \times S| (\sigma(C), \sigma(B), \sigma(A)) \in T_1\}$$
 then we can define $\cT = T_1 \cup T_2$. We can also define a group $$\Gamma_\cT = \langle g_a| a \in S, g_ag_bg_c = 1 \text{ if and only if } (a,b,c) \in \cT, g_ag_{\sigma(a)} = 1\rangle.$$
\end{definition}

 Note that this means  $A, B, C \in T_1$ if and only if $A \cap B = \emptyset$ and $\sigma(C) = A \cup B$. We will occasionally call $\cT$ the degenerate triangle presentation, from the work of \cite{CMSZ93}, as the $q$-analogue of the structure we have defined is Cartwright's $\tilde{A}_{n-1}$ triangle presentations. In particular $\cT$ satisfies the following properties. 
\begin{itemize}
  \item \cite[Proposition 2.5]{Jones21} If $dim(A)+dim(B) + dim(C) < n$, then $(A,B, \sigma(D)), (D, C, \sigma(E)) \in \cT$ if and only if $(A, F, \sigma(E)), (B, C, \sigma(F)) \in \cT$, where $D, E, F$ are unique.
\end{itemize}

\begin{lemma}
\label{T_1 condition}
If $(A,B,C) \in \cT$, then $|A| + |B| < n$ if and only if $(A,B,C) \in T_1$. 
\end{lemma}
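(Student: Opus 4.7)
The plan is to show directly that the two subsets $T_1$ and $T_2$ partition $\cT$ according to the sum $|A|+|B|+|C|$, and to use the fact that $A,B,C$ are proper nonempty subsets of $X$.

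First I would unpack the definitions. If $(A,B,C) \in T_1$, then by definition $A,B,C$ are pairwise disjoint with $A \cup B \cup C = X$, so $|A|+|B|+|C| = n$. Since $C \in S$ is nonempty we have $|C| \geq 1$, hence $|A|+|B| \leq n-1 < n$. Conversely, if $(A,B,C) \in T_2$, then $(\sigma(C),\sigma(B),\sigma(A)) \in T_1$, which gives $|\sigma(A)|+|\sigma(B)|+|\sigma(C)| = n$. Substituting $|\sigma(X')| = n - |X'|$ yields $|A|+|B|+|C| = 2n$. Because each of $A,B,C$ is a proper subset, $|C| \leq n-1$, and so $|A|+|B| = 2n - |C| \geq n+1 > n$.

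Together these two computations show that any element of $T_1$ satisfies $|A|+|B| < n$, while any element of $T_2$ satisfies $|A|+|B| > n$. In particular $T_1$ and $T_2$ are disjoint, and $\cT = T_1 \sqcup T_2$ splits cleanly according to the inequality in question. The biconditional of the lemma then follows: if $(A,B,C) \in \cT$ with $|A|+|B| < n$, we cannot be in $T_2$, so we must be in $T_1$; conversely membership in $T_1$ already forces $|A|+|B| < n$.

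There is no real obstacle here; the lemma is essentially a bookkeeping consequence of the definitions of $T_1$ and $T_2$ together with the proper-and-nonempty hypothesis on elements of $S$. The only subtlety worth flagging in the writeup is making explicit that $|C| \geq 1$ (used in the $T_1$ direction) and $|C| \leq n-1$ (used in the $T_2$ direction), both of which come from $C \in S$ being a proper nonempty subset of $X$.
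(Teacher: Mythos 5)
Your proof is correct and follows essentially the same route as the paper: show that membership in $T_1$ forces $|A|+|B|<n$ while membership in $T_2$ forces $|A|+|B|>n$, and conclude from $\cT = T_1\cup T_2$. Your handling of the $T_2$ case is in fact slightly cleaner than the paper's, since by summing all three quantities $|\sigma(A)|+|\sigma(B)|+|\sigma(C)|=n$ you avoid the paper's appeal to rotational symmetry of the triangle presentation.
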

\begin{proof}
    First, we show that if $(A,B,C) \in T_1$, then $|A| + |B| < n$. To see this, see $(A,B,C) \in T_1$ means that $A, B$ and $C$ are disjoint sets whose union is $X$, then $|A| + |B| + |C| = n$, so $|A| + |B| = n - |C| < n$. 
    \\
    \\
    Now, to show the converse, we will show that if $(A', B', C') \in T_2$, then $|A'| + |B'| > n$. To do this, see that $(A', B', C') \in T_2$ implies $(\sigma(C'), \sigma(B'), \sigma(A')) \in T_1$. By the rotational symmetry of triangle presentations, we have $(\sigma(B'), \sigma(A'), \sigma(C')) \in T_1$ as well. See that now from the first paragraph, $|\sigma(B')| + |\sigma(A')| < n$. But now $|A'| + |B
'|  = n - |\sigma(A')| + n - |\sigma(B')| = 2n - ( |\sigma(B')| + |\sigma(A')|) > n$. So $|A'| + |B'| > n$ for every element of $T_2$, which completes the proof. 
\end{proof}
\begin{prop}
$\Gamma_{\cT}$ is isomorphic to the $sl_n$ weight lattice $\Lambda_{W}$. 
\end{prop}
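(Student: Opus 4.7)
The plan is to write down an explicit homomorphism $\phi\colon \Gamma_\cT \to \Lambda_W$ and show it is an isomorphism. Realizing $\Lambda_W$ as $\ZZ^{n}/\langle e_1 + \cdots + e_n\rangle$ (with $\bar{e}_i$ the image of the $i$-th standard basis vector), for each proper nonempty $a \subset X$ set $v_a := \sum_{i\in a} \bar{e}_i$ and define $\phi(g_a) = v_a$. The relation $g_a g_{\sigma(a)} = 1$ maps to $v_a + v_{\sigma(a)} = \sum_{i \in X}\bar{e}_i = 0$; for $(a,b,c) \in T_1$ the three sets partition $X$ and $v_a+v_b+v_c = 0$; for $(a,b,c) \in T_2$ the condition $(\sigma(c),\sigma(b),\sigma(a)) \in T_1$ unpacks to the statement that every $i \in X$ lies in exactly two of $a,b,c$, so $v_a+v_b+v_c = 2\sum_{i \in X}\bar{e}_i = 0$. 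Hence $\phi$ is a well-defined homomorphism, and surjectivity is immediate since $v_{\{i\}} = \bar{e}_i$.

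The real content is injectivity, and I would get it by showing that $\Gamma_\cT$ admits a presentation matching $\Lambda_W$. The first key step is to prove $\Gamma_\cT$ is abelian. Because $T_1$ is invariant under arbitrary permutations (not merely cyclic ones), for disjoint $a,b$ with $|a|+|b|<n$ both $g_a g_b g_{\sigma(a \cup b)} = 1$ and $g_b g_a g_{\sigma(a\cup b)} = 1$ hold, yielding $g_a g_b = g_b g_a$; when $|a|+|b| = n$ we have $b = \sigma(a)$ and the $\sigma$-relation already gives commutativity. For overlapping $a,b$, I would decompose $a = (a\cap b) \sqcup (a \setminus b)$ using the $T_1$-triple $(a \cap b, a \setminus b, \sigma(a))$ to write $g_a = g_{a\cap b}g_{a\setminus b}$ (and similarly for $b$), which reduces $g_a g_b = g_b g_a$ to commutativity among pairwise-disjoint pieces, already handled.

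With commutativity in hand, applying $(\{i\}, a\setminus\{i\}, \sigma(a)) \in T_1$ for $i \in a$ (or the $\sigma$-relation when $|a| = n-1$) inductively shows $\Gamma_\cT$ is generated by $g_{\{1\}},\ldots,g_{\{n\}}$. These generators satisfy the single abelian relation $g_{\{1\}}\cdots g_{\{n\}} = 1$ (coming from $\sigma(\{1,\ldots,n-1\}) = \{n\}$), and every other defining relation of $\Gamma_\cT$, after rewriting each $g_a$ in terms of singletons, becomes a consequence of it (the $T_2$-case collapses to the square of this relation, which is automatic). Thus $\Gamma_\cT \cong \ZZ^n/\langle e_1+\cdots+e_n\rangle \cong \Lambda_W$, and $\phi$ is the isomorphism. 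The step I expect to be the main obstacle is the abelianness argument: the overlapping case demands careful attention to degenerate sub-cases where $a \subseteq b$ or where one of the disjoint pieces has cardinality making the relevant triangle saturate at $n$, so that the $T_1$-relation is unavailable and one must invoke the $\sigma$-relation instead.
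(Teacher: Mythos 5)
Your proof is correct, and it takes a genuinely different route from the paper's. The paper constructs the map in the opposite direction, $\Lambda_W \to \Gamma_\cT$: it encodes a word $g_{A_1}\cdots g_{A_k}$ as the multiset $A_1 \cup \cdots \cup A_k$ and sends the weight $a_1L_1 + \cdots + a_nL_n$ to the element represented by $\{1^{a_1},\ldots,n^{a_n}\}$, then checks surjectivity and resolves injectivity by tracking the single ambiguity $L_1+\cdots+L_n = 0$ against the relation $g_ag_{\sigma(a)}=1$. Crucially, the paper's multiset description is only well defined because ``all generators in $\Gamma_\cT$ commute,'' which the paper asserts without proof; your argument supplies exactly this missing ingredient, deriving commutativity from the full (not merely cyclic) permutation-invariance of $T_1$ together with the decomposition of overlapping sets into disjoint pieces, and you are right that the degenerate subcases ($a\subseteq b$, or disjoint pieces saturating at $n$ so that one must fall back on the $\sigma$-relation) are where the care is needed. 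Your reduction to the presentation $\langle g_{\{1\}},\ldots,g_{\{n\}} \mid \text{abelian},\ g_{\{1\}}\cdots g_{\{n\}}=1\rangle \cong \ZZ^n/\langle e_1+\cdots+e_n\rangle$ also makes injectivity transparent, where the paper's injectivity argument is more ad hoc. What the paper's approach buys in exchange is the explicit multiset/weight coordinates and the verification that adjacency is preserved, which is what is actually used downstream in Theorem 3.1 to identify the Cayley graph with the $1$-skeleton of the Coxeter complex; your group-theoretic isomorphism would still need that small supplement for the later application.
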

\begin{proof}
To construct an isomorphism $\phi: \Lambda_W \to \Gamma_{\cT}$, we first come up with an alternate notation for an element $g$ of $\Gamma_\cT$. Since $g = g_{A_1} ... g_{A_k}$ for subsets $A_1, ..., A_k \subseteq X$, we can represent $g$ as the multiset that is the union of $A_1,..., A_k$. For example $g_{\{1\} }g_{\{1,2\}}$ is identified with the multiset $\{1,1,2\}$ or $\{1^{2}, 2^{1}\}$. This description is well defined since all generators in $\Gamma_\cT$ commute. Now we can define $\phi$ as the map that takes the weight $a_1L_1 + ... + a_nL_n$ to the element of $\cT$ represented by $\{1^{a_1}, ..., n^{a_n}\}$ (where $L_i$ are as described in \cite[pg 414]{FH04}. It is easy to see that $\phi$ is a homomorphism.  
\\
\\
We must now show that if two weights are connected in the weight lattice, that the corresponding vertices are connected in $\cT$. We consider the weights $\alpha = a_1L_1 + ... + a_nL_n$ and $\beta = b_1L_1 + ... + b_nL_n$. This means that $(a_1L_1 + ... + a_nL_n) - (b_1L_1 + ... + b_nL_n) = c_1L_1 + ... + c_nL_n$ where $c_i \in \{0, 1\}$ (this is since all weights directly adjacent to 0, and therefore appear as the difference between adjacent weights, are $\Sigma_{i = 1}^{n} a_i L_i$, with $a_i \in \{0,1\}$. So $\phi(\alpha)$ and $\phi(\beta)$ will differ by a multiset where each element has multiplicity $0$ or $1$, which is just a subset of $\{1,...,n\}$. In $\Gamma_{\cT}$, the multisets of $\phi(\alpha)$ and $\phi(\beta)$ differing by a subsets of $\{1, .., n\}$ means that the group elements differ by a generator, and are connected in $\Gamma_\cT$. 
\\
\\
To show that $\phi$ is a isomorphism, first note that surjectivity follows from  $a_1L_1 + ... + a_nL_n$ being in $\Lambda_W$ for all sequences $a_1, ..., a_n$. The only question for injectivity is that  weights can be described by multiple sequences of integers. This is due to the fact that $L_1 + ... + L_n = 0$ in the weight lattice. Suppose we have a weight $L_{i_1} + ... + L_{i_n} = - L_{j_1} + ... + - L_{j_n}$. See that $\phi(L_{i_1} + ... + L_{i_n}) = g_{\{i_1, ..., i_n\}}$ and $\phi(- L_{j_1} + ... + - L_{j_n}) = g^{-1}_{\{j_1, ..., j_n\}}$. The relation on the weight lattice means that $\{i_1, ..., i_n\} \cup \{j_1, ..., j_n\} = \{1, ..., n\}$ where the union is disjoint, and therefore .$ g_{\{i_1, ..., i_n\}} = g^{-1}_{\{j_1, ..., j_n\}}$ by definition of $\Gamma_\cT$. This argument extends to any two sequences of integers describing the same weights because $\phi $ is a homomorphism, and so $\phi$ is injective.  
\end{proof}

Recall that the facts that we stated in section \ref{CCs and Buildings} give us a connection between edges in a Coxeter complex $\Sigma$ and subsets of a set of order $n$. If we take the graph planar algebra of $\Sigma$, we will see that the order of the subsets corresponds directly to the label of the edge. This definition motivates the following theorem. 
\begin{theorem} 
\label{CayleyGraphCCISO}
If $\cT$ is the degenerate triangle presentation of type $\tilde{A}_{n-1}$, then the Cayley Graph of the group 
$\Gamma_\cT$
has flag complex isomorphic to $\Sigma$, the Coxeter complex of type $\tilde{A}_{n-1}$ (or equivalently, the Cayley graph of $\Gamma_\cT$ (with generators corresponding to subsets of $X$) is isomorphic to the 1-skeleton of $\Sigma$).
\end{theorem}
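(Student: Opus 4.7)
The plan is to bootstrap from the preceding proposition, which identifies $\Gamma_\cT$ with the $sl_n$ weight lattice $\Lambda_W$ via $\phi$. Under this isomorphism, the Cayley graph of $\Gamma_\cT$ with generators $\{g_A : A \in S\}$ coincides with the Cayley graph of $\Lambda_W$ in which $\alpha$ is joined to $\alpha + \sum_{i \in A} L_i$ by an edge of label $|A|$, for each $A \in S$. It therefore suffices to exhibit a label-preserving isomorphism between this labeled Cayley graph and the 1-skeleton of $\Sigma$, since $\Sigma$ is a flag complex and so the flag completions of matching 1-skeletons will automatically agree.

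For the bijection on vertices, I would fix a base vertex $v_0 \in V(\Sigma)$ (e.g.\ the type-$n$ vertex of the fundamental chamber, i.e.\ the coset of the finite parabolic $\langle s_1, \ldots, s_{n-1}\rangle$). The translation subgroup of the extended affine Weyl group $W \ltimes \Lambda_W$ acts freely and transitively on $V(\Sigma)$ (see \cite[Section 10.1]{AB10}), and the resulting bijection $\Lambda_W \to V(\Sigma)$, $\alpha \mapsto \alpha \cdot v_0$, composed with $\phi^{-1}$, yields the desired bijection $\Phi : \Gamma_\cT \to V(\Sigma)$.

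For adjacency and labels, recall that $lk_\Sigma(v_0)$ is the $A_{n-1}$ Coxeter complex, whose vertices are precisely the proper nonempty subsets $A \subset X$, with $A$ joined to $v_0$ by an edge of label $|A| \bmod n$. A direct inspection of the fundamental alcove shows that translation by $\sum_{i \in A} L_i$ sends $v_0$ to exactly this neighbor. By $\Lambda_W$-equivariance (translation acts on both sides), this local matching at $v_0$ extends to every vertex of the Cayley graph. Since $\Sigma$ is a flag complex (simplices are determined by pairwise-joinability of their vertices, a condition which reduces to the link structure and is itself a flag complex locally by the remarks of Section~\ref{CCs and Buildings}), the isomorphism of labeled 1-skeletons promotes to an isomorphism of flag complexes, yielding $\Sigma$.

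The main obstacle is the direct inspection asserting that translation by $\sum_{i \in A} L_i$ lands on the correct neighbor of $v_0$ under the canonical identification of $lk_\Sigma(v_0)$ with the flag complex of proper nonempty subsets of $X$. This is the bridge between the group-theoretic description of $\Sigma$ (cosets of parabolics in $\tilde W$) and the geometric description (the affine hyperplane arrangement on $\Lambda_W$), and it requires carefully tracking how subset cardinalities translate into vertex type labels; once this is settled, both the edge bijection and the label preservation are immediate by equivariance.
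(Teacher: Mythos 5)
Your argument is correct in outline, but it takes a genuinely different route from the paper. The paper does not argue directly: it treats Theorem \ref{CayleyGraphCCISO} as the $q=1$ specialization of Cartwright's result identifying the Cayley graph of a group given by an $\tilde{A}_{n-1}$ triangle presentation with the $1$-skeleton of a building, and simply asserts that Cartwright's proof goes through verbatim for the degenerate presentation $\cT$. You instead bootstrap from the proposition $\Gamma_\cT \cong \Lambda_W$ and then use the affine Weyl geometry: the simply transitive translation action of $\Lambda_W$ on $V(\Sigma)$ gives the vertex bijection, the identification of $lk_\Sigma(v_0)$ with proper nonempty subsets of $X$ gives the edge and label matching at the base vertex, and equivariance propagates it everywhere; the flag-complex property of $\Sigma$ upgrades the $1$-skeleton isomorphism to a simplicial one. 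This buys a self-contained proof that does not lean on \cite{Cart95}, at the cost of the one computation you rightly flag as the crux: verifying that translation by $\sum_{i\in A}L_i$ carries $v_0$ to the link vertex labelled by $A$ (and that the type of that vertex is $|A| \bmod n$), which is the bridge between the coset description and the alcove picture. Two small points worth making explicit if you write this up: (i) the Cayley graph is undirected because $\sum_{i\in A}L_i = -\sum_{i\notin A}L_i$, matching $g_A g_{\sigma(A)}=1$ and the paper's convention that the reversed edge carries label $n-|A| \bmod n$; and (ii) you should cite the fact that Coxeter complexes are flag complexes, since your final step silently uses that $\Sigma$ is recovered from its $1$-skeleton.
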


This theorem is a reimagining of \cite[Theorem 2.5]{Cart95} for the $\mathbb{F}_1$ case, and the proof follows similarly to the proof of that theorem. It holds since the Coxeter complex is the building of order 1, and $\cT$ as defined is the triangle presentation of order 1. 

An important consequence of theorem \ref{CayleyGraphCCISO} is that all triangles in the $\tilde{A}_{n-1}$ Coxeter complex must have edge labels summing to $n$ (this follows from property E on page 46 on \cite{Cart95}). This must hold for any $\tilde{A}_{n-1}$ building as well, since the edge labeling is derived from the vertex labeling, which is consistent between a building and any of its apartments. 

The connection between the Cayley graph of $\Gamma_\cT$ and the 1-skeleton of $\Sigma$ give us a correspondence between cycles $a \to b \to c \to a$ in the graph planar algebra of $\Sigma$ and elements $(u,v,w) \in \cT$ (namely, that $(u,v,w)$ are the subsets labeling the edges in the cycle). So we can say that the collection of cycles in $G(\Sigma)$ also satisfies the following property.
\begin{property}
\label{tetrahedron for CC}
If there exists vertices $a,b,c,d \in \Sigma$ so that labels of edges $a \to d$, $d \to b$ and $b \to c$ sum to less than $n$, $a \to b \to c \to a$ and $a \to d \to b \to a$ are cycles in $G(\Sigma)$ if and only if $a \to c \to d \to a$ and $b \to d \to c \to b$ are cycles in $G(\Sigma)$. That is, we have the left picture as a subgraph of $G(\Sigma)$ if any only if we have the right picture as a subgraph of $G(\Sigma)$
\begin{equation}
\label{tetrahedron picture}
\begin{tikzpicture}[scale=1, baseline = -.1cm]
\draw (0, 0.5) to (0, -0.5);
\draw (0, 0.5) to (-0.7, 0);
\draw (0, -0.5) to (-0.7, 0);
\draw (0, 0.5) to (0.7, 0);
\draw (0, -0.5) to (0.7, 0);
\node(none) at (0, 0.7) {a};
\node(none) at (0, -0.7) {b};
\node(none) at (0.9, 0) {d};
\node(none) at (-0.9, 0) {c};
\end{tikzpicture}
\,\,\,\,\,\,\,
\begin{tikzpicture}[scale=1, baseline = -.1cm]
\draw (0.5,0) to ( -0.5,0);
\draw (0.5,0) to ( 0, -0.7);
\draw (-0.5,0) to ( 0, -0.7);
\draw (-0.5,0) to ( 0, 0.7);
\draw (0.5,0) to ( 0, 0.7);
\node(none) at (0, 0.9) {a};
\node(none) at (0, -0.9) {b};
\node(none) at (0.7, 0) {d};
\node(none) at (-0.7, 0) {c};
\end{tikzpicture}
\end{equation}
\end{property}

\cite{Jones21} gives a definition of a functor from $Web(SL_n^{-})$ to $Vec(\Gamma_\cT)$ where $\cT$ is any triangle presentation, for defining field of characteristic $p \geq n -1$ and $q\equiv 1 \mod p$. We will show that when $\cT$ is the degenerate triangle presentation as defined in  definition \ref{degenerateTP} (and therefore is the $sl_n$ weight lattice), the restriction on the characteristic of the field is unnecessary. We show this in the following theorem.

\begin{theorem}
\label{degenSS}
If $\Gamma$ is the $sl_n$ weight lattice, then there exists a functor $Web(SL_n^{-}) \rightarrow G(\Gamma)$ as defined in section \ref{FunctorDef} and \cite[Section 4]{Jones21}, where the relevant categories may be defined over any field $\Bbbk$.
\end{theorem}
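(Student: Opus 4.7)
The plan is to verify that the maps defined in Section \ref{FunctorDef} satisfy every $Web(SL_n^{-})$ relation when the target is $G(\Gamma)$ for $\Gamma$ the $sl_n$ weight lattice, and to do so by exhibiting each relation as an equality of nonnegative integers (counts of valid labelings) rather than as an equality in $\Bbbk$. Since each generating morphism takes values in $\{0,1\}$, evaluating any composition on fixed boundary paths produces an integer counting consistent triangle labelings in $\Gamma$; once this integer is shown to equal the prescribed integer coefficient on the other side of each relation, the relation automatically holds in any field.

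First I would dispose of the relations whose structure is essentially combinatorial. Associativity and coassociativity, relation \ref{associativity}, reduce to Property \ref{tetrahedron for CC}: identifying each side with a tetrahedron in $\Gamma$, the two sides correspond to the two tetrahedral triangulations, and by Lemma \ref{T_1 condition} the internal labels involved sum to less than $n$, placing us exactly in the hypothesis of the property. The boundary case in which the outermost label equals $n$ is handled by the identification of edge label $n$ with the empty path. The lollipop relations \ref{lollipop relations} and the $SL_n$ relations \ref{SLn relations} are then immediate from the definitions of $\mathbbm{1}_{\emptyset,\emptyset}$, $\mathbbm{1}_{j,n-j}$, and $\mathbbm{1}_{\emptyset}$ together with this same identification.

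The bigon bursting relation \ref{bigon bursting}, evaluated on a boundary edge $a \to b$ with $b - a$ a subset of $[n]$ of size $j+k$, asks for the number of intermediate vertices $c$ such that $a \to c$ has label $j$, $c \to b$ has label $k$, and the triangle $(a,c,b)$ is valid in $\Gamma$. Under the subset identification from Definition \ref{degenerateTP}, this count equals the number of ordered bipartitions of $b - a$ into a $j$-subset and a $k$-subset, which is $\binom{j+k}{j}$, matching the relation on the nose.

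The main obstacle will be the square switch relations \ref{square switch 1} and \ref{square switch 2}. For fixed boundary data consisting of four corner vertices together with edge labels $m,\ell,j,k$ on the sides, I would enumerate internal vertex choices on each side. Translating via the subset identification, the left-hand count becomes the number of subsets of $[n]$ of a prescribed size with prescribed intersection sizes with given reference subsets, and the right-hand side becomes a sum over an overlap parameter $t$ of such numbers weighted by $\binom{m-\ell+j-k}{t}$. The required identity is then a Vandermonde-type identity among ordinary (not $q$-) binomial coefficients, which holds as an equality of integers. Since no denominators appear and no $q$-integer is inverted in this degenerate setting, the equality passes unchanged to any field $\Bbbk$; this is precisely why the hypothesis $p \geq n-1$ needed for the general $q$-analogue in \cite{Jones21} can be dropped here.
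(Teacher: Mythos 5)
Your proposal is correct and follows essentially the same route as the paper: translate everything into the subset language of Definition \ref{degenerateTP}, observe that every relation becomes an equality of integer counts of labelings, and reduce the square switch relations to the Chu--Vandermonde identity for ordinary binomial coefficients, which is exactly why the characteristic restriction from \cite{Jones21} can be dropped. The paper works out the square-switch cardinality bookkeeping (showing $|(z\cap w)\setminus u| = m-\ell-\delta+|u\setminus(z\cup w)|$) in detail and defers the remaining relations to Jones's argument, but the underlying strategy is the one you describe.
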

\begin{proof}
Recall that we must show that the images of the $Web(SL_n^{-})$ maps in $Vec(\Gamma)$ (as defined in section \ref{FunctorDef} satisfy the $Web(SL_n^{-})$ relations. We will show this by using the presentation of the weight lattice stated in definition \ref{degenerateTP}. This allows us to assign to each edge of the weight lattice (and therefore, strings in the $Web(SL_n^{-})$ maps and relations) a proper subset of $[n]$. Our first goal is to prove the square switch relations in full generality (\cite[Lemma 4.1]{Jones21} gives a proof where $\text{char}(\Bbbk) = p \geq n - 1$). We will show the proof of (\ref{square switch 1}). First, we will denote the functions in $Hom(m \otimes {\ell}, {m - (k-j)} \otimes {\ell + (k-j)})$ given by the left and right hand side of the equation as L and R respectively. So for some $(z ,u) \in \Pi_m \times \Pi_\ell$, we have that 
$$L(z \otimes u) = \sum_{(w,v) \in \Pi_{m -(k-j)} \times \Pi_{\ell + (k-j)} }L_{z,u,w,v} w \otimes v$$ $$R(z \otimes u) = \sum_{(w,v) \in \Pi_{m -(k-j)} \times \Pi_{\ell + (k-j)} }R_{z,u,w,v} w \otimes v.$$
We will show that $L_{z,u,w,v} = R_{z,u,w,v}$ for all $z,u,w,v$. To see this, first note that by definitioni of $Web(SL_n^{-})$, the elements of the triangle presentation represented by the generating morphisms will always have $|A| + |B| < n$ and so we can say $(A,B, \sigma(C)) \in T_1$ by Lemma \ref{T_1 condition}, which implies that $A \cup B = C$. We will make heavy use of this fact in our argument. Also, we will use the shorthand $\delta = k-j$ throughout. 

First, fix $z \in \Pi_m, w \in \Pi_\ell, w \in \Pi_{m- \delta}, v \in \Pi_{\ell+ \delta}$, with $|z \cap w| = m -i$ for some $k$. If $j \leq k$, we have $i\in [\delta, d]$ and if $c >d$ we have $i \in [0, d]$. Note that $L_{z,u,w,v}$ is the number of tuples $(p,r,q,s)$ that satisfy the left hand side of the picture below (i.e. choices of subset so that every trivalent vertex is labeled by an element of $\cT$). Similarly, $R_{z,u,w,v}$ is sum of all allowable $t$ of the number of tuples $(p',r',q',s')$ that satisfy the right hand side of the picture below. 

$$\begin{tikzpicture}[scale=1, baseline = -.1cm]
\draw (-0.5,-1) -- (-0.5, 1) ;
\draw (0.5, -1) -- (0.5, 1) ;
\draw (-0.5, -0.6) -- (0.5, -0.2);
\draw (-0.5, 0.6) -- (0.5, 0.2);
\node(none) at (0, 0.6) {s};
\node(none) at (0, -0.6) {r};
\node(none) at ( 0.8,0) {q};
\node(none) at ( -0.8,0) {p};
\node(none) at (-0.5,-1.2) {z};
\node(none) at (0.5,-1.2) {u};
\node(none) at (-0.5,1.2) {w};
\node(none) at (0.5,1.2) {v};
\end{tikzpicture} = \sum_{t}{m-\ell+j-k \choose t}\begin{tikzpicture}[scale=1, baseline = -.1cm]
\draw (-0.5,-1) -- (-0.5, 1) ;
\draw (0.5, -1) -- (0.5, 1) ;
\draw (-0.5, -0.2) -- (0.5, -0.6);
\draw (-0.5, 0.2) -- (0.5, 0.6);
\node(none) at (0, 0.6) {s'};
\node(none) at (0, -0.6) {r'};
\node(none) at ( 0.8,0) {q'};
\node(none) at ( -0.8,0) {p'};
\node(none) at (-0.5,-1.2) {z};
\node(none) at (0.5,-1.2) {u};
\node(none) at (-0.5,1.2) {w};
\node(none) at (0.5,1.2) {v};
\end{tikzpicture}$$

First, suppose that we have a valid tuple $(p,q,r,s)$, where $(p,r,\sigma(z)), (r,u,\sigma(q)), (s,v, \sigma(q)),$ $ (p,s,\sigma(w)) \in \cT$. See that if $|z \cap w| = m-i$, then $|z \setminus w| = i$ and $|w \setminus z| = i - \delta$. Since $p \cup r = z$ and $p \cup s = w$, then we have $p \subseteq z \cap w$. We must then have $z\setminus w \subseteq r$ and $w \setminus z \subseteq s$. Also, we must have $z \setminus w \not\subseteq s $ and $w \setminus z \not\subseteq r $, $r \cap u = \emptyset$ and $s \cap v = \emptyset$. So we must have $z \setminus w \subseteq v \setminus u$ and $w \setminus z \subseteq u\setminus v$. If these two conditions are not met, $L_{z,u,w,v} = 0$. So we assume that they are true and proceed with our proof. 

Now, we can say that $p = (z \cap w) \setminus j$, where $|j| = k-i$. See that this forces $r = (z\setminus w) \cup j$, $s = (w\setminus z) \cup j$ and $q = u \cup j \cup (z \setminus w)$. These values will make the $(p,r,q,s)$ satisfy this morphism. We are free to pick any subset $j \subset (z \cap w) \setminus u$. So we have $L_{z,u,w,v} = { |(z \cap w) \setminus u| \choose k-i}$. 

Now, consider the right hand side diagram and suppose we have a valid tuple $(p',r', q', s')$. See that we have $z \cup r' = p'$ and $w \cup s' = p'$ and so we must have that $z \cup w \subseteq p'$. Also, see that we must then have $w \setminus z \subseteq r'$ and $z \setminus w \subseteq s'$. Also, see we have $z \setminus w \not\subseteq r'$, $w \setminus z \not\subseteq s'$ and so we have $z \setminus w \subseteq v \setminus u$ and $w \setminus z \subseteq u\setminus v$. If these two conditions are not met, $R_{z,u,w,v} = 0$. So we have $L_{z,u,w,v}$ and $R_{z,u,w,v}$ are 0 under the same conditions. 

Now, see that $p' = (z \cup w) \cup j'$ where $|j'| = k-t-i$. Also, see that this forces $s' = (z \setminus w) \cup j'$, $r' = (w\setminus z) \cup j'$ and $q' = u \setminus r'$. We are free to pick any subset $j \subseteq u \setminus (z \cup w)$. Also, we must have $k-i-t \geq 0$, so we will have $t \in [0, k-i]$. So $R_{z,u,w,v} = \sum_{t = 0}^{k-i} {m-\ell-\delta \choose t} {|u\setminus (z \cup w)| \choose k-i-t}$. 

We must show that 
$${ |(z \cap w) \setminus u| \choose k-i} = \sum_{t = 0}^{k-i} {m-\ell-\delta \choose t} {|u\setminus (z \cup w)| \choose k-i-t}$$
We will do this by showing $|(z \cap w) \setminus u| = m - \ell -\delta + |u\setminus (z \cup w)|$ and then applying the Chu-Vandermonde identity. 

First, see that $|z \cap w \setminus u| = |z \cap w| - |u \cap z \cap w|$ and similarly $|u \setminus (z \cup w)| = |u| - |u \cap (z \cup w)|$. We can further decompose $|u \cap (z \cup w)|$ as $|u \cap (z\setminus w)| + |u \cap (w \setminus z)| + |u \cap z \cap w|$. Substituting these into desired equality, we now have
$$|z \cap w| - |u \cap z \cap w| = m -\ell - \delta + |u| - |u \cap (z\setminus w)| - |u \cap (w \setminus z)| - |u \cap z \cap w|$$ Now, we know that $z \setminus w \subset v \setminus u$, so the third to last term is simply 0. Also, $|u \cap (w \setminus z)| = |w \setminus z|$ since $w\setminus z \subset u \setminus v$. So we can further simplify to $$|z \cap w| = m - \ell - \delta + |u| - |w\setminus z|$$ Remembering $|z \cap w| = m - i$, $|u| = \ell$ and $|w \setminus z| = i - \delta$ will give us the desired equality. Thus (5) holds. The proof of (6) follows similarly. 

The proof of the other $Web(SL_n^{-})$ relations follows exactly like the proof of theorem 4.2 in \cite{Jones21}.
\end{proof}
\subsection{The Functor for Buildings}
\label{functor for buidlngs}
Now, we move towards a functor from $Web(SL_n^{-})$ to the graph planar algebra of any building $\Delta$ of type $\tilde{A}_{n-1}$. We will lean heavily on the correspondence between edges connected to a distinguished vertex in $\Delta$ and elements of a finite projective geometry of order $n$ that we established in section \ref{buidling}. When we examine the graph planar algebra of $\Delta$, the labels of edges in our graph will coincide with the dimensions of the corresponding subspaces. In the degenerate case studied above, we were able to extend our labeling of edges at one vertex to a consistent labeling of every edge in the diagram. However, in general, we do not have the symmetry needed for this labeling. Instead, we must center any argument on a specific vertex and only use subspaces to describe the link of this vertex. 

We must consider one more property of an $\tilde{A}_{n-1}$ building. This property will prove essential to showing that the square switch relations are satisfied in the graph planar algebra of $\Delta$. 

\begin{lemma} [The Tetrahedron Property]
\label{TetraProp} Suppose $\Delta$ is a locally finite building of type $\tilde{A}_{n-1}$ and $\Gamma_{\Delta}$ is as defined in section \ref{GPAsection}. If cycles $a \to b \to c \to a$ and $a \to d \to b \to a$ exist in $\Gamma_{\Delta}$, where the sum of labels on edges $a \to d,\,\, d \to b $ and $b \to c$ is less than $n$, then $c$ and $d$ are connected and therefore the cycles $a \to c \to d \to a$ and $b \to d \to c \to b$ exist. 
\end{lemma}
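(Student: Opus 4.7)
The plan is to reduce the question entirely to the link of $a$. By Lemma~\ref{buildingFPG}, $\mathrm{lk}_\Delta(a)$ is a finite projective geometry of dimension $n-1$, so via Remark~\ref{link as subspaces} I would identify the neighbors $b,c,d$ of $a$ with proper subspaces $V_b, V_c, V_d$ of an $n$-dimensional vector space whose dimensions equal the labels of $a\to b$, $a\to c$, $a\to d$ respectively. The two assumed cycles through $a$ then translate to the statements that $V_b$ and $V_c$ are incident (one contains the other) and that $V_d$ and $V_b$ are incident.

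Next I would compute the labels on the edges $d\to b$ and $b\to c$ in terms of these dimensions. Using the labeling convention, each of these labels is the representative in $\{1,\dots,n-1\}$ of the corresponding difference $\dim(V_b)-\dim(V_d)$ or $\dim(V_c)-\dim(V_b)$ modulo $n$; in particular, the label equals the honest non-negative difference if the smaller subspace sits inside the larger one, and otherwise picks up an extra $+n$. A case analysis over the four a priori configurations of the two incidence relations is then short: in three of them at least one of the two labels ``wraps around,'' and a quick check shows the total of the three edge labels is at least $\dim(V_c)+n>n-1$, contradicting the hypothesis. The sole surviving configuration is the chain $V_d\subset V_b\subset V_c$, in which the sum telescopes to exactly $\dim(V_c)<n$.

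From the chain $V_d\subset V_b\subset V_c$ both desired cycles follow immediately. Since $V_d$ and $V_c$ are incident, Remark~\ref{link as subspaces} gives the existence of the cycle $a\to c\to d\to a$, and in particular $c$ and $d$ are connected. Moreover $\{V_d, V_b, V_c\}$ is a full flag, hence a $2$-simplex of $\mathrm{lk}_\Delta(a)$, so $\{a,b,c,d\}$ is a $3$-simplex of $\Delta$; its face $\{b,c,d\}$ is then a $2$-simplex, yielding the cycle $b\to d\to c\to b$.

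The only delicate part of this plan is the bookkeeping around the mod-$n$ edge labels: one must keep careful track of when a difference of dimensions already lies in $\{1,\dots,n-1\}$ versus when it must be shifted by $+n$. Once this dictionary is in hand, the geometric content -- that the ``sum less than $n$'' hypothesis selects precisely the chain configuration inside the projective geometry at $a$, and that such a chain assembles into a tetrahedron in $\Delta$ -- is transparent.
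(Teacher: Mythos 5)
Your proposal is correct, but it takes a genuinely different route from the paper. The paper's proof is a two-step reduction: by the building axioms the two triangles $\{a,b,c\}$ and $\{a,b,d\}$ lie in a common apartment, which is an $\tilde{A}_{n-1}$ Coxeter complex, and then Property~\ref{tetrahedron for CC} (itself resting on Theorem~\ref{CayleyGraphCCISO} and the combinatorics of the degenerate triangle presentation from \cite{Jones21}) supplies the missing two cycles inside that apartment. You instead bypass apartments entirely and argue in $lk_\Delta(a)$ via Lemma~\ref{buildingFPG} and Remark~\ref{link as subspaces}: the two given cycles force $V_b,V_c$ and $V_d,V_b$ to be incident, and your mod-$n$ bookkeeping correctly shows that of the four incidence configurations only the chain $V_d\subset V_b\subset V_c$ is compatible with the hypothesis (the other three force the label sum to be at least $\dim(V_c)+n$), after which incidence of $V_d$ and $V_c$ gives the cycle $a\to c\to d\to a$ and the flag $\{V_d,V_b,V_c\}$ gives the $2$-simplex $\{b,c,d\}$, hence the cycle $b\to d\to c\to b$. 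The paper's route is shorter given the order-$1$ machinery it has already built, and it is the argument that generalizes verbatim to any statement already known for Coxeter complexes; your route is more self-contained, makes explicit exactly where the ``sum less than $n$'' hypothesis enters (it excludes the wrapped-around configurations), yields the slightly stronger conclusion that $\{a,b,c,d\}$ spans a $3$-simplex, and is stylistically the same link-plus-projective-geometry technique the paper itself deploys later in Lemma~\ref{BuildingSS}. The one point to keep honest in a write-up is the direction convention for edge labels (label of $x\to y$ versus $y\to x$), since your case analysis depends on which of the two differences of dimensions each label represents; with the paper's convention that the label of $a\to x$ equals $\dim V(a\to x)$ and that labels telescope around cycles, your computation goes through as stated.
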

\begin{proof}
Consider the 2-simplices $C$ and $D$ with vertex sets $a,b,c$ and $a,b,d$ respectively. By the axioms of a building, these simplices must lie in a common apartment. Hence there exists some Coxeter complex $\Sigma_{CD} \in \mathcal{A}$ so that $a,b,c,d$ are vertices in $\Sigma_{CD}$. So in the graph $\Gamma_{\Sigma_{CD}}$, we have cycles $a \to b \to c \to a$ and $a \to d \to b \to a$. Now, use property \ref{tetrahedron for CC} to show that $a \to c \to d \to a$ and $b \to d \to c \to b$ exists in $\Gamma_{\Sigma_{CD}}$. But since $\Gamma_{\Sigma_{CD}}$ is embedded in $\Gamma_{\Sigma}$, we have that these cycles exist in $\Gamma_{\Sigma}$ as well. 
\end{proof}
This property derives its names from it's implication that if we have two faces of a tetrahedron in our building $\Delta$ we must have the entire tetrahedron in $\Delta$ as well. Equipped with this useful property and the language of finite projective geometry to describe our buildings locally, we now prove the special case of the square switch relations in the graph planar algebra of $\Delta$ shown in \cite{Jones21}. 
\begin{lemma} 
\label{BuildingSS}
For a locally finite $\tilde{A}_{n-1}$ building $\Delta$ of order $q$, if $\Bbbk$ is a field of characteristic $p$, where $q \equiv 1 \mod p$, then the following special cases of the square switch relation are satisfied by our embedding of $Web(SL_n^{-})$ into the graph planar algebra of $\Delta$ over $\Bbbk$. 
\begin{equation}
\label{square switch special case}
\begin{tikzpicture}[scale=1, baseline = -.1cm]
\draw (-0.5,-1) -- (-0.5, 1) ;
\draw (0.5, -1) -- (0.5, 1) ;
\draw (-0.5, -0.6) -- (0.5, -0.2);
\draw (-0.5, 0.6) -- (0.5, 0.2);
\node(none) at (0, 0.6) {1};
\node(none) at (0, -0.6) {1};
\node(none) at (-0.9,0) {m-1};
\node(none) at (0.9,0) {2};
\node(none) at (-0.5,-1.2) {m};
\node(none) at (0.5,-1.2) {1};
\node(none) at (-0.5,1.2) {m};
\node(none) at (0.5,1.2) {1};
\end{tikzpicture} = 
\begin{tikzpicture}[scale=1, baseline = -.1cm]
\draw (-0.5,-1) -- (-0.5, 1) ;
\draw (0.5, -1) -- (0.5, -0.6) ;
\draw (0.5, 0.6) -- (0.5, 1) ;
\draw (-0.5, -0.2) -- (0.5, -0.6);
\draw (-0.5, 0.2) -- (0.5, 0.6);
\node(none) at (-1,0) {m+1};
\node(none) at (-0.5,-1.2) {m};
\node(none) at (0.5,-1.2) {1};
\node(none) at (-0.5,1.2) {m};
\node(none) at (0.5,1.2) {1};
\node(none) at (0, 0.6) {1};
\node(none) at (0, -0.6) {1};
\end{tikzpicture} + (m-1) 
\begin{tikzpicture}[scale = 1, baseline = -.1cm]
\draw (-0.5,-1) -- (-0.5, 1.2) ;
\draw (0.5, -1) -- (0.5, 1.2) ;
\node(none) at (-0.5,-1.2) {m};
\node(none) at (0.5,-1.2) {1};
\end{tikzpicture}
\end{equation}
and 
\begin{equation}
\label{square switch special case 2}\begin{tikzpicture}[scale=1, baseline = -.1cm]
\draw (-0.5,-1) -- (-0.5, 1) ;
\draw (0.5, -1) -- (0.5, 1) ;
\draw (-0.5, -0.2) -- (0.5, -0.6);
\draw (-0.5, 0.2) -- (0.5, 0.6);
\node(none) at (0, 0.6) {1};
\node(none) at (0, -0.6) {1};
\node(none) at (-0.9,0) {2};
\node(none) at (0.9,0) {m-1};
\node(none) at (-0.5,-1.2) {1};
\node(none) at (0.5,-1.2) {m};
\node(none) at (-0.5,1.2) {1};
\node(none) at (0.5,1.2) {m};
\end{tikzpicture} = 
\begin{tikzpicture}[scale=1, baseline = -.1cm]
\draw (0.5,-1) -- (0.5, 1) ;
\draw (-0.5, -1) -- (-0.5, -0.6) ;
\draw (-0.5, 0.6) -- (-0.5, 1) ;
\draw (-0.5, -0.6) -- (0.5, -0.2);
\draw (-0.5, 0.6) -- (0.5, 0.2);
\node(none) at (0,0) {m+1};

\node(none) at (-0.5,-1.2) {1};
\node(none) at (0.5,-1.2) {m};
\node(none) at (-0.5,1.2) {1};
\node(none) at (0.5,1.2) {m};
\node(none) at (0, 0.6) {1};
\node(none) at (0, -0.6) {1};
\end{tikzpicture} + (m-1) 
\begin{tikzpicture}[scale = 1, baseline = -.1cm]
\draw (-0.5,-1) -- (-0.5, 1.2) ;
\draw (0.5, -1) -- (0.5, 1.2) ;
\node(none) at (-0.5,-1.2) {1};
\node(none) at (0.5,-1.2) {m};
\end{tikzpicture}
\end{equation}

\end{lemma}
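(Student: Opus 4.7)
The plan is to evaluate both sides of (\ref{square switch special case}) at an arbitrary matched pair $(p_1, p_2) = (A \to D \to C,\, A \to D' \to C)$ of paths of type $(m,1)$ sharing endpoints. Unpacking the composition in $G(\Delta)$ with the generator images from section \ref{FunctorDef}, the left-hand side counts vertices $D_1 \in \Delta$ for which the four triangles $\{A, D_1, D\}$, $\{D_1, D, C\}$, $\{D_1, D', C\}$, $\{A, D_1, D'\}$ are all 2-simplices of $\Delta$ (the relevant edge labels are then forced by the cycle-sum constraint from Theorem \ref{CayleyGraphCCISO}), while the right-hand side evaluates to $\mathbbm{1}[\{A,D,C\} \text{ and } \{A,D',C\} \text{ are both 2-simplices}] + (m-1)\,\mathbbm{1}[D = D']$.

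When $D = D'$, the four triangle conditions collapse to two at the single vertex $D$: the subspace $S_{D_1}^D$ must be a hyperplane containing both $S_A^D$ (dimension $n-m$) and $S_C^D$ (dimension $1$) in the projective geometry identified with the link of $D$ (Remark \ref{link as subspaces}). Lemma \ref{Number of K-dim subspaces with fixed M dim subspace} together with the identity $\dim(V+W) = \dim V + \dim W - \dim(V\cap W)$ give the count of such hyperplanes as $[m]_q$ when $S_C^D \subset S_A^D$ (equivalently, when $\{A, D, C\}$ is itself a 2-simplex) and $[m-1]_q$ otherwise. The hypothesis $q \equiv 1 \mod p$ reduces these $q$-integers to $m$ and $m-1$ in $\Bbbk$, matching the right-hand side values $1 + (m-1) = m$ and $0 + (m-1) = m-1$ in the respective sub-cases.

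When $D \neq D'$, the second term of the right-hand side vanishes and I will show the left-hand side also lies in $\{0,1\}$, equal to the first term. Reading the two conditions involving $C$ from the vertex $C$ forces $S_{D_1}^C = S_D^C \cap S_{D'}^C$, an $(n-2)$-dimensional subspace by the dimension formula since $S_D^C \neq S_{D'}^C$, so at most one candidate $D_1 \in \Delta$ exists. Reading the remaining two conditions at $D_1$ and using $S_C^{D_1} = S_D^{D_1} + S_{D'}^{D_1}$ (the distinct lines $S_D^{D_1}$ and $S_{D'}^{D_1}$ span the plane $S_C^{D_1}$) shows this candidate is valid exactly when $\{A, D_1, C\}$ is itself a 2-simplex. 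Applying the Tetrahedron Property (Lemma \ref{TetraProp}) to the cycles $C \to D_1 \to D \to C$ and $C \to A \to D_1 \to C$ sharing the undirected edge $\{C, D_1\}$, for which the pertinent label sum is $(n-m-1) + (m-1) + 1 = n-1 < n$, together with the analogous application with $D'$ in place of $D$, shows that $\{A, D_1, C\}$ combined with $\{D_1, D, C\}$ and $\{D_1, D', C\}$ implies both $\{A, D, C\}$ and $\{A, D', C\}$. For the converse, building axiom $(2)$ supplies a common apartment $\Sigma$ containing the two 2-simplices $\{A, D, C\}$ and $\{A, D', C\}$, and Theorem \ref{degenSS} applied inside this Coxeter complex produces the required $D_1 \in \Sigma \subset \Delta$, which by the uniqueness noted above is the unique candidate.

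The second relation (\ref{square switch special case 2}) follows by a parallel argument with the two input strands interchanged, swapping the roles of the $m$-strand and $1$-strand and dualizing the analysis between the vertices $A$ and $C$. The main obstacle throughout is arranging the orientations of the pair of cycles so that the Tetrahedron Property's label-sum hypothesis holds with slack exactly $1$; boundary parameters $m \in \{1, n-1\}$, at which an intermediate label degenerates, are handled using the conventions set in section \ref{FunctorDef} for the lollipop generators and empty paths.
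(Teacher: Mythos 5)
Your proof is correct and rests on the same toolbox as the paper's own argument — fixing the boundary paths, splitting on $D = D'$ versus $D \neq D'$, counting hyperplanes in a vertex link via Lemma \ref{Number of K-dim subspaces with fixed M dim subspace} for the coincident case, and invoking the Tetrahedron Property for the distinct case — but the heart of the $D \neq D'$ case is routed differently. For uniqueness you pin down $S_{D_1}^{C} = S_{D}^{C} \cap S_{D'}^{C}$ in the link of $C$, where the paper works in the link of $A$ with $V(a\to e) = V(a\to b)\cap V(a\to d)$; this is cosmetic. More substantively, for ``LHS $=1$ implies RHS $=1$'' you first extract the $2$-simplex $\{A, D_1, C\}$ from the four given triangles by a span argument in the link of $D_1$ and then apply Lemma \ref{TetraProp} twice across the edge $\{C, D_1\}$ (with label sum $n-1 < n$, which checks out), sidestepping the paper's construction of an auxiliary vertex $g$ with $V(a\to g) = V(a\to b) + V(a\to d)$ and the subsequent identification $g = c$ carried out in the link of $e$. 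For the converse you push the two $2$-simplices $\{A,D,C\}$ and $\{A,D',C\}$ into a common apartment via building axiom (2) and let Theorem \ref{degenSS} (applied in characteristic $0$, so that the labeling counts are honest integers) manufacture $D_1$ inside that Coxeter complex, with your uniqueness observation guaranteeing the count in all of $\Delta$ is still exactly $1$; the paper instead constructs $e$ directly from the intersection of subspaces and verifies the remaining triangles with the Tetrahedron Property. Your apartment argument is in the same spirit as the paper's own derivation of Lemma \ref{TetraProp} and is arguably cleaner, at the cost of invoking the full strength of Theorem \ref{degenSS} rather than only Property \ref{tetrahedron for CC}. The degenerate parameters $m=1$ and $m=n-1$ (where $A = D_1$ or $A = C$ and an edge label becomes $0$ or $n$) are deferred in your write-up to the conventions of section \ref{FunctorDef}, whereas the paper disposes of them explicitly; this is worth spelling out but is not a gap.
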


\begin{proof}
We will give the proof of the first relation, with the second being analogous. We must show that the left and right hand sides of \ref{square switch special case} have the same evaluation for every pair of paths $(p_1, p_2)$ where $p_1, p_2$ have type $(m, 1)$. The choice of $(p_1, p_2)$ will fix the vertices labeling the boundary regions of our diagram. As mentioned in section \ref{GPAsection}, evaluating each side of the equation amounts to summing over all labelings of each side whose first element is $p_1$ and final element is $p_2$. Note that since the image of the generating morphisms in $Web(SL_n^{-})$ evaluate to either 0 or 1 for any input paths, we need not worry about multiplying values in our sum, instead just adding 1 to our total for every valid labeling. Recall also from section \ref{GPAsection} that on the right hand side we will evalute each morphism independently and add the result. To begin, we set $p_1 = a \to b \to c$ and $p_2 = a \to d \to c$, and note that this totally determines the right hand side, while on the left side we will count over all $e$ such that $p' = a \to e \to c$ gives us a labelling $(p_1, p', p_2)$. This will correspond to the following diagram. 

$$\begin{tikzpicture}[scale=1, baseline = -.1cm]
\draw (-0.5,-1) -- (-0.5, 1) ;
\draw (0.5, -1) -- (0.5, 1) ;
\draw (-0.5, -0.6) -- (0.5, -0.2);
\draw (-0.5, 0.6) -- (0.5, 0.2);
\node(none) at (-0.85,0) {a};
\node(none) at (0.85,0) {c};
\node(none) at (0, -0.8){b};
\node(none) at (0, 0.8){d};
\node(none) at (0,0){e};
\end{tikzpicture} = 
\begin{tikzpicture}[scale=1, baseline = -.1cm]
\draw (-0.5,-1) -- (-0.5, 1) ;
\draw (0.5, -1) -- (0.5, -0.6) ;
\draw (0.5, 0.6) -- (0.5, 1) ;
\draw (-0.5, -0.2) -- (0.5, -0.6);
\draw (-0.5, 0.2) -- (0.5, 0.6);
\node(none) at (-0.85,0) {a};
\node(none) at (0.35,0) {c};
\node(none) at (0, -0.8){b};
\node(none) at (0, 0.8){d};
\end{tikzpicture} + (m-1) *
\begin{tikzpicture}[scale = 1, baseline = -.1cm]
\draw (-0.5,-1) -- (-0.5, 1) ;
\draw (0.5, -1) -- (0.5, 1) ;
\node(none) at (-0.85,0) {a};
\node(none) at (0.85,0) {c};
\node(none) at (0, -0.8){b};
\node(none) at (0, 0.8){d};
\end{tikzpicture}
$$

 First, note that we must have $m < n$, since we cannot have edges with label $n$ in $\Gamma_\Delta$. If $m =1$, this relation is trivial. So we assume $1 < m < n$.
We define $L_{a,b,c,d}$ as the number of labelings of the left hand side consistent with our choice of boundary vertices, and $R_{a,b,c,d}$ as the number of labelings of the right hand side consistent with our chosen boundary. We must show that $L_{a,b,c,d} = R_{a,b,c,d}$ when evaluated in the field $\Bbbk$.

First, let's assume that $b \neq d$. We will show that $L_{a,b,c,d} \leq 1$ and $R_{a,b,c,d} \leq 1$ and then that $L_{a,b,c,d} = 1$ precisely when $R_{a,b,c,d} = 1$. Suppose $L_{a,b,c,d} \neq 0$. We then have an $e$ such that we have edges $ a \to e$ of type $m - 1$, $e \to b$ and $e \to d$ of type $1$, and $e \to c$ of type 2 in $\Gamma_\Delta$. Then we have cycles $a \to e \to b \to a$ and $a \to e \to d \to a$ in $\Gamma_\Delta$. Now, since $e$ and $b$ are connected to both $a$ and each other in $\Delta$ they are also connected in $lk_{\Delta}(a)$. We can apply remark \ref{link as subspaces} to $lk_{\Delta}(a)$ and assign a subspace $V(a \to x)$ for each edge from $a$ to $x \in lk_{\Delta}(a)$. Recall that the dimension of $V(a \to x)$ corresponds to the label of $a \to x$ (i.e. $V(a \to b)$ and $V(a \to d)$ are $m$-dimensional, while $V(a \to e)$ is $m-1$ dimensional). So by remark \ref{link as subspaces}, we have $V(a \to e) \subset V(a \to b)$ and similarly $V(a \to e) \subset V(a \to d)$ (which implies $V(a \to e) \subset V(a \to b) \cap V(a \to d)$). Now, $b \neq d$ implies $V(a \to b) \neq V(a \to d)$. Since $dim(V(a \to b)) = dim(V(a \to d)) = m$, then $V(a \to b) \cap V(a \to d)$ has dimension at most $m - 1$. $V(a \to e)$ is $m-1$ dimensional and so we must have that $V(a \to e) = V(a \to b) \cap V(a \to d)$. So if $dim ( V(a \to b) \cap V(a \to d)) < m-1$, $L_{a,b,c,d} = 0$. If $dim( V(a \to b) \cap V(a \to d)) = m-1$, then $V(a \to b) \cap V(a \to d)$ labels a unique vertex, and this vertex is the only possible $e$. So $L_{a,b,c,d}$ is at most 1. 

Now we show that $R_{a,b,c,d}$ is at most 1. First, notice that $b \neq d$ implies that the second term of this sum is 0. Now, notice that all regions of the first term are predetermined by our choice of $a,b,c,d$. So there is at most one way to fill in the diagram. We now give a condition for the existence of this filling. If we associate edges $a \to x$ with vector spaces as before, we see that $V(a \to b) \subset V(a \to c)$ and $V(a \to d) \subset V(a \to c)$. So we have that $V(a \to b) + V(a \to d) \subset V(a \to c)$. Edge $a \to c$ is type $m + 1$ and therefore $V(a \to c)$ has dimension $m + 1$. However, $b \neq d$, implies $dim( V(a \to b) + V(a \to d)) \geq m + 1$ (since as before $b \neq d$ implies $V(a \to b) \neq V(a \to d)$). So in order to fill in this diagram, we must have $dim ( V(a \to b) + V(a \to d)) = m + 1$. In this case $V(a \to b) + V(a \to d) = V(a \to c)$ and $R_{a,b,c,d} = 1$. Otherwise, $R_{a,b,c,d} = 0$. 

Now, suppose that $R_{a,b,c,d} = 1$. This means that $V(a \to b) + V(a \to d)$ has dimension $m + 1$. Since $dim(V(a \to b) + V(a \to d)) + dim (V(a \to b) \cap V(a \to d)) = dim(V(a\to d)) + dim(V(a \to b))$, we have that $dim(V(a \to b) \cap V(a \to d)) = m - 1$. We choose $e$ to be the unique vertex with edge $a \to e$ such that $V(a \to e) = V(a \to b) \cap V(a \to d)$. Thus we have cycles $a \to e \to b \to a$ and $a \to e \to d \to a$ in our graph. Now, notice that $R_{a,b,c,d} = 1$ also implies that we have cycles $a \to b \to c \to a$ and $a \to d \to c \to a$ in our graph. If we suppose $m< n- 1$, we can apply the tetrahedron property to the pair of cycles $$ \,\, a \to b \to c \to a, a \to e \to b \to a \text{ and } \,\, a \to d \to c \to a, a \to e \to d \to a $$ and use uniqueness of edges in our graph to see that we have an edge $e \to c$. So $e \to c \to b \to e$, $e \to c \to d \to e$ and $e \to a \to c \to e$ are cycles in our graph. The first two give us a consistent filling of the left hand side of the equation and so $L_{a,b,c,d} = 1$. 

If $m = n-1$, then $a = c$ and we can directly see that having $a \to b \to c \to a$ and $a \to e \to b \to a$ implies $e \to c \to b \to e$, and similarly for $d$, without having to use the tetrahedron property. 

Now, suppose that $L_{a,b,c,d} = 1$. This means that $dim(V(a \to b) \cap V(a \to d)) = m - 1$. So $dim(V(a \to b) + V(a \to d) ) = m + m - (m -1) = m +1$. Thus, we have that $V(a \to b) + V(a \to d)$ represents some edge $a \to g$ with label $m+1$, where we have cycles $a \to b \to g \to a$ and $a \to g \to d \to a$ in our graph. If $m < n-1$, we can apply the tetrahedron property to the pairs $$a \to b \to g \to a, \,\, a \to e \to b \to a \text{ and } a \to d \to g \to a, \,\, a \to e \to d \to a $$ to see that we have cycles $e \to b \to g \to e$, $a \to e \to g \to a$ and $e \to d \to g \to e$ in our graph as well. This shows us that we have a vertex $g$ in $G(\Delta)$ that is connected to $a$, $b$, $d$ and $e$. We also know that labels of the edges $b \to g$ and $d \to g$ are the same as the labels of $b \to c $ and $d \to c$. If the edge $a \to c$ exists, then it will also have the same label as $a \to g$. Also, recall that the labels of edges in any cycle must sum to $0 \mod n$. We know that $a \to e$ has label $m -1$ and $g \to a$ has label $n-(m+1)$, so $e \to g$ must be labeled with $2$. 

Now we need that $g = c$. To see this, we forget the subspaces with which we labelled edges starting at $a$. Instead, we apply remark \ref{link as subspaces} to $lk_{\Delta}(e)$ and label each edge with target vertex $x$ and label $i_x$ with a subspace $W(e \to x)$ of dimension $i_x$. So based on the the labeling of $lk_{\Delta}(e)$ of $e$, we have 1-dimensional subspaces $W(e \to b)$ and $W(e \to d)$. Since we have shown that $g$ is connected to both $b$ and $d$, the 2 dimensional subspace $W(e\to g)$ contains $W(e \to b) + W(e \to d)$. But since $b \neq d$, we have $W(e \to b) + W(e \to d)$ is 2-dimensional, so $W(e \to g) = W(e \to b) + W(e \to d)$. Note $c$ is also connected to both $b$ and $d$. We can repeat this reasoning with $W(e \to c)$ to see that $W(e \to c) = W(e \to b) + W(e \to d)$. So we must have $W(e \to c) = W(e \to g)$. Since each edge is labeled by a unique subspaces, that gives us $g = c$. This means that we have an edge $a \to c$ with label $m +1$ and therefore $R_{a,b,c,d} = 1$. 

If $m = n-1$, then $m + 1 = n$, so $a = c$. Thus the cycles on the right hand side are simply loops $a \to b \to a$ and $a \to d \to a$. 

Now, suppose $b = d$. We have that $$R_{a,b,c,b} =\begin{cases} 
   m & a \to b \to c \to a \text{ in } \Delta \\
   m-1 & \text{otherwise} 
  \end{cases}
$$
Now, we consider $L_{a,b,c,b}$, and see that we are counting vertices $e$ so that $a \to e \to b \to a$ and $e \to b \to c \to e$ are cycles in $\Delta$. To count these, we apply remark \ref{link as subspaces} to $lk_{\Delta}(b)$. That is, we define an $n-m$ dimensional subspace $S(b \to a)$, a $n - 1$ dimensional subspace $S(b \to e)$ and a 1 dimensional subspace $S(b\to c)$. Now, our previous statement about existence of cycles is equivalent to requiring $S(b \to c) \subset S(b \to e)$ and $S(b \to a) \subset S(b \to e)$. So we must choose for $S(b\to e)$ any $n -1$ dimensional subspace containing the subspace $S(b \to a) + S(b \to c)$. If $a \to b \to c \to a$ is a cycle in $\Delta$, we have $S(b\to c) \subset S(b \to a)$. So we are choosing an $n -1$ dimensional subspace containing a fixed $n -m$ dimensional subspace. By lemma \ref{Number of K-dim subspaces with fixed M dim subspace}, there are $\bigl[\!\begin{smallmatrix} m \\ m - 1\end{smallmatrix}\!\bigr]_q$ ways to to do this, which in our case ($q \equiv 1 \mod p$) reduces to $m$. If $a \to b \to c \to a$ is not a cycle, we are instead counting the number of $n -1 $ dimensional subspace containing a fixed $n - m + 1$ dimensional subspace. Again by lemma \ref{Number of K-dim subspaces with fixed M dim subspace}, there are $\bigl[\!\begin{smallmatrix} m - 1\\ m - 2 \end{smallmatrix}\!\bigr]_q$ ways to do this, which in our case reduces to $m -1$. In either case, we have $L_{a,b,c,b} = R_{a,b,c,b}$. 
\end{proof}

Since the square switch relations are the most complicated of the $Web(SL_n^{-})$ relations, this lemma almost completely gives us the following theorem. 

\begin{theorem}
\label{GPAFunctorThm}
Suppose $\Delta$ is an $\tilde{A}_{n-1}$ building of order $q$ and $G(\Delta)$ is the graph planar algebra of $\Delta$ over a field $\Bbbk$ of characteristic $p \geq n-1$ where $q \equiv 1 \mod p$. Then the maps we defined as the images of the $Web(SL^{-}_n)$ maps in the graph planar algebra satisfies the $Web(SL_n^{-})$ relations. Therefore these maps define a functor from $Web(SL_n^{-})$ to the graph planar algebra of $\Delta$ over $\Bbbk$. 
\end{theorem}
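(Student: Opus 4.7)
The plan is to verify each defining relation of $Web(SL_n^{-})$ from Section \ref{webSLn} by evaluating both sides on an arbitrary matched pair of paths $(p_1, p_2)$ and showing that the resulting counts agree in $\Bbbk$. As described at the end of Section \ref{FunctorDef}, each such evaluation reduces to counting compatible triangle arrangements in $\Gamma_\Delta$ with fixed boundary vertices. The two main tools are Remark \ref{link as subspaces}, which identifies the edges out of a fixed vertex $v$ with proper subspaces of $\mathbb{F}_q^n$, together with the counting lemmas \ref{Number of K-dim subspaces} and \ref{Number of K-dim subspaces with fixed M dim subspace}.

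I would dispose first of the relations requiring essentially no counting. The associativity and coassociativity relations \ref{associativity} are immediate: both sides count the same single trivalent arrangement, and the intermediate edge is determined by the corresponding sum of subspaces in the link of the common vertex. The lollipop relations \ref{lollipop relations} reduce to the observation that the cup and cap act as identity on the empty path. The $SL_n$ duality relation \ref{SLn relations} follows because for any directed edge of label $m$ its reverse has label $n-m$, so the composition always admits a unique closure evaluating to $1$ on the identity path.

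For the bigon bursting relation \ref{bigon bursting}, evaluated at $(p_1, p_2)$ both of type $(j+k)$ with common endpoints $a, c$, the LHS counts intermediate paths $a \to x \to c$ of type $(j, k)$ forming triangles with both $p_1$ and $p_2$. By Remark \ref{link as subspaces}, these correspond to $j$-dimensional subspaces contained in the fixed $(j+k)$-dimensional subspace labeling the edge $a \to c$, so Lemma \ref{Number of K-dim subspaces with fixed M dim subspace} gives the count as $\bigl[\!\begin{smallmatrix} j+k \\ j \end{smallmatrix}\!\bigr]_q$. Since $q \equiv 1 \bmod p$, this $q$-binomial collapses to $\binom{j+k}{j}$ in $\Bbbk$, matching the RHS scalar.

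The main obstacle is the general square switch relations \ref{square switch 1} and \ref{square switch 2}, since Lemma \ref{BuildingSS} handles only the special cases where one of the inner labels equals $1$. For the general statement I would follow the strategy of Theorem \ref{degenSS}: fixing boundary paths of types $(m, \ell)$ pins down the four corner vertices, and applying Remark \ref{link as subspaces} at the vertex common to the inner strands realizes the LHS count as the number of intermediate subspaces of the appropriate dimension with prescribed containments among the four boundary subspaces, while the RHS count is a sum over $t$ of analogous products of subspace counts weighted by the binomial coefficients in the relation. By Lemma \ref{Number of K-dim subspaces with fixed M dim subspace} each such count is a product of $q$-binomials, and $q \equiv 1 \bmod p$ together with $p \geq n-1$ collapses these to ordinary binomials, reducing the identity to the same Chu-Vandermonde manipulation used in the proof of Theorem \ref{degenSS}. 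The subtle point is that a subspace configuration inside the link of a single vertex does not a priori extend to a genuine collection of paths in $\Gamma_\Delta$ outside that link; the tetrahedron property (Lemma \ref{TetraProp}) is what bridges this gap, exactly as in the proof of Lemma \ref{BuildingSS}.
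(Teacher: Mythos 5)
Your treatment of bigon bursting and the $SL_n^{-}$ duality relation matches the paper's, and your overall framework (evaluate on matched pairs, count triangle arrangements, translate into subspace counts in a link via Remark \ref{link as subspaces}) is the right one. Two points diverge, one minor and one serious. Minor: for associativity the two sides of relation \ref{associativity} are \emph{not} the same arrangement --- the left side asks for the triangles $a \to b \to c \to a$ and $a \to c \to d \to a$, the right side for $a \to b \to d \to a$ and $b \to c \to d \to b$, i.e.\ the two complementary pairs of faces of a tetrahedron --- so the relation is exactly an instance of the tetrahedron property (Lemma \ref{TetraProp}), not a statement that an intermediate edge is determined by a sum of subspaces. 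Your phrasing elides the actual content of this step.

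The serious gap is in the square switch relations. You propose to prove them in full generality by transcribing the subset-counting argument of Theorem \ref{degenSS} into subspace counting. That transcription is not routine: the degenerate proof repeatedly uses set-theoretic operations with no subspace analogue (e.g.\ ``$p = (z\cap w)\setminus j$ for an arbitrary subset $j \subset (z\cap w)\setminus u$'', and the decomposition $|u\cap(z\cup w)| = |u\cap(z\setminus w)|+|u\cap(w\setminus z)|+|u\cap z\cap w|$), and the resulting counts are not of the form covered by Lemmas \ref{Number of K-dim subspaces} and \ref{Number of K-dim subspaces with fixed M dim subspace}, which only count subspaces containing a single fixed subspace. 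Counting subspaces with prescribed intersection dimensions against \emph{several} fixed subspaces introduces extra powers of $q$ and is not a single $q$-binomial, so ``the same Chu--Vandermonde manipulation'' does not go through verbatim; nor does the tetrahedron property, which only glues two triangles sharing an edge subject to a label-sum bound, obviously assemble the larger configurations appearing for general $j,k$. This is precisely why the paper does not attempt the general count: it proves only the special cases with inner labels $1$ (Lemma \ref{BuildingSS}, itself a delicate argument passing between the links of $a$, $b$, and $e$), invokes \cite[Proposition 3.3]{Jones21} --- a purely diagrammatic reduction showing those special cases generate the general square switch relations for $1 \leq j,k \leq n-2$ --- and then checks separately that the remaining cases $j = n-1$ or $k = n-1$ collapse to trivial equalities of diagrams. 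Your proposal is missing that reduction step, and without it the general square switch verification you sketch does not stand.
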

\begin{proof}
\textbf{Associativity and Co-associativity: } We must show that our maps satisfy relation \ref{associativity}. We do this by labeling the regions in the following ways 
$$\begin{tikzpicture}[scale=1, baseline = -.1cm]
\draw(-1, 1) to (0,0);
\draw(1,1) to (0,0);
\draw(0,0) to (0, -1);
\draw (0, 1) to (-0.5, 0.5);
\node(none) at (-0.5,-0.5 ) {a};
\node(none) at (0.5,-0.5 ) {d};
\node(none) at (0.1, 0.5 ) {c};
\node(none) at (-0.5, 0.8) {b};
\end{tikzpicture} = \begin{tikzpicture}[scale=1, baseline = -.1cm]
\draw(-1, 1) to (0,0);
\draw(1,1) to (0,0);
\draw(0,0) to (0, -1);
\draw (0, 1) to (0.5, 0.5);
\node(none) at (-0.5,-0.5 ) {a};
\node(none) at (0.5,-0.5 ) {d};
\node(none) at (-0.1, 0.5 ) {b};
\node(none) at (0.5, 0.8) {c};
\end{tikzpicture} \text{ and } \begin{tikzpicture}[scale=1, baseline = -.1cm]
\draw(-1, -1) to (0,0);
\draw(1,-1) to (0,0);
\draw(0,0) to (0, 1);
\draw (0, -1) to (-0.5, -0.5);
\node(none) at (-0.5, 0.5 ) {a};
\node(none) at (0.5, 0.5 ) {d};
\node(none) at (0.1, -0.5 ) {c};
\node(none) at (-0.5, -0.8) {b};
\end{tikzpicture} = \begin{tikzpicture}[scale=1, baseline = -.1cm]
\draw(-1, -1) to (0,0);
\draw(1,-1) to (0,0);
\draw(0,0) to (0, 1);
\draw (0, -1) to (0.5, -0.5);
\node(none) at (-0.5,0.5 ) {a};
\node(none) at (0.5,0.5 ) {d};
\node(none) at (-0.1, -0.5 ) {b};
\node(none) at (0.5, -0.8) {c}; \end{tikzpicture} $$
See that the left hand side of these equations are 1 if $a \to b \to c \to a$ and $a \to c \to d \to a$ are cycles in our graph and that the right hand side of these equations are 1 if $a \to b \to d \to a$ and $b \to c \to d \to b$ are cycles in our graph. Now, note that these cycles are directly related by the tetrahedron property. So by lemma \ref{TetraProp}, both sides of this equation are 1 at exactly the same time (The edge sum condition on the tetrahedron property ensures that both sides of the equality are valid morphisms in $G(\Delta)$). 
\\
\\
\textbf{Bigon-Bursting: } We must show that our maps satisfy relation \ref{bigon bursting}. We do this by labeling the regions in the following ways 
$$\begin{tikzpicture}[scale=1, baseline = -.1cm]
\draw (0,-1)-- (0,-0.5) ;
\draw (270:0.5) arc (270:90:0.5);
\draw(-90:0.5) arc (-90:90:0.5);
\draw(0,0.5)--(0,1);
\node(none) at (0.8,0) {c};
\node(none) at (-0.8,0) {a};
\node(none) at (0,0) {b};
\end{tikzpicture} = {j +k \choose j} \begin{tikzpicture}[scale=1, baseline = -.1cm]
\draw (0,-1) to (0,1);
\node(none) at (0.3,0) {c};
\node(none) at (-0.3,0) {a};
\end{tikzpicture}$$
 We consider $lk_{\Delta}(a)$ and assign subspaces to each label. We see that $V(a \to b)$ is a $j$ dimensional subspace and that $V(a \to c)$ is a $j + k$ dimensional subspace. On the left hand side of this equation, we are counting the number of $b$'s so that $a \to b \to c \to a$ is a cycle in our graph. Notice that this is equivalent to counting the number of $j$ dimensional subspaces that are incident to the $j + k$ dimensional subspace $V(a \to c)$. We can do this by counting $n - j$ dimensional subspaces containing a fixed $n - (j +k)$ dimensional subspace. By lemma \ref{Number of K-dim subspaces with fixed M dim subspace}, there are $\bigl[\!\begin{smallmatrix} n - (n - (j+k)\\ n - j - (n - (j + k)) \end{smallmatrix}\!\bigr]_q = \bigl[\!\begin{smallmatrix} j + k\\ k \end{smallmatrix}\!\bigr]_q$. In our setting this equals ${j +k \choose k} = {j + k \choose j}$, as we have chosen $q \equiv 1 \mod p$. But this is exactly the value of the right hand side, since we have labelled this diagram in a unique way by definition. 

\textbf{Square Switch Relations:} Proposition 3.3 in \cite{Jones21} says that Lemma \ref{BuildingSS} gives us the general square switch relations whenever $1 \leq j,k \leq n-2$. So we must consider the cases where we have $j = n-1$ or $k = n-1$. We will show the case that $k = n -1$ for the first square switch relation and proofs of the other cases will follow similarly. 

In our setup, if no edges have labels $n$, $k = n-1$ then $m = n - 1$ and $\ell = 1$. So if we look at the possible values of $t$ for the right hand side of the equation, we see that the only possible value of $t$ giving a non-zero value is $j - 1$. So our relation reduces to the following diagram, 
$$\begin{tikzpicture}[scale=1, baseline = -.1cm]
\draw (-0.5, 1) to (0, 0.5);
\draw (0.5, 1) to (0, 0.5);
\draw (0, 0.5) to (0, 0.1);
\draw (-0.5, -1) to (0, -0.5);
\draw (0.5, -1) to (0, -0.5);
\draw (0, -0.5) to (0, -0.1);
 \mydotw{(90:0.1)};
 \mydotw{(90:-0.1)};
 \node(none) at (-0.5, -1.2) {n-1};
 \node(none) at (0.5, -1.2) {1};
 \node(none) at (-0.5, 1.2) {j};
 \node(none) at (0.5, 1.2) {n-j};
\end{tikzpicture} = \begin{tikzpicture}[scale=1, baseline = -.1cm]
\draw (-0.5, 1) to (0, 0.5);
\draw (0.5, 1) to (0, 0.5);
\draw (0, 0.5) to (0, 0.1);
\draw (-0.5, -1) to (0, -0.5);
\draw (0.5, -1) to (0, -0.5);
\draw (0, -0.5) to (0, -0.1);
 \mydotw{(90:0.1)};
 \mydotw{(90:-0.1)};
 \node(none) at (-0.5, -1.2) {n-1};
 \node(none) at (0.5, -1.2) {1};
 \node(none) at (-0.5, 1.2) {j};
 \node(none) at (0.5, 1.2) {n-j};
\end{tikzpicture}$$ 
which is trivial. The other cases similarly give trivial equalities of diagrams. So we see that the square switch relations hold in general. 

\textbf{$SL_n^{-}$ Relations: } We must show that our maps satisfy relation \ref{SLn relations}. We do this by labeling the regions in the following ways 
$$\begin{tikzpicture}[scale=1, baseline = -.1cm]
\draw (0, -1) to (0, 1);
\node(none) at (-0.3, 0) {a};
\node(none) at (0.3, 0) {b};
\end{tikzpicture} = \begin{tikzpicture}[scale=1, baseline = -.1cm]
\draw (-0.5, -1) to (-0.5, 0.8);
\draw (0.5, -0.8) to (0.5, 1);
\draw (-0.5, 0.4) to (0.5, -0.4);
\node(none) at (-0.7, 0) {a};
\node(none) at (0.7, 0) {b};
\node(none) at (0, 0.4) {a};
\node(none) at (0, -0.4) {b};
 \mydotw{(125:0.9)};
 \mydotw{(305:0.9)};
\end{tikzpicture} = \begin{tikzpicture}[scale=1, baseline = -.1cm]
\draw (-0.5, -0.8) to (-0.5, 1);
\draw (0.5, -1) to (0.5, 0.8);
\draw (-0.5, -0.4) to (0.5, 0.4);
\node(none) at (-0.7, 0) {a};
\node(none) at (0.7, 0) {b};
\node(none) at (0, 0.4) {b};
\node(none) at (0, -0.4) {a};
 \mydotw{(55:0.9)};
 \mydotw{(235:0.9)};
\end{tikzpicture}$$

The left hand side of this equation is 1 whenever there is an edge $a \to b$ of type $m$. Both the center and right hand side of this equality are 1 whenever there are loops $a \to b \to a$ and $b \to a \to b$, where $a \to b$ has type $m$ and $b \to a$ has type $n - m$. These conditions are exactly equivalent given the structure of our graph. 
\end{proof}
\section{Constructing Module Categories}
\label{Constructing Module Categories}

Mondule categories for a tensor category $\mathcal{C}$ are a categorification of the concept of $R$-modules for a ring $R$. Recall that we can make an Abelian group $A$ an $R$-module by choice of a ring homomorphism $R \to End(A)$. So for a tensor category $\mathcal{C}$, a $\mathcal{C}$-module category $\mathcal{M}$ can be characterized by a category $\mathcal{M}$ along with a monoidal functor $\mathcal{C} \to End(\mathcal{M})$. More details concerning module categories can be found in \cite[Section 7.1]{EGNO15} and definitions \ref{GroupActionCat}, \ref{EquivCat}, and \ref{EquivObject} are taken from \cite[Section 2.7]{EGNO15}. Our goal is to reinterpret the functor we established in the previous section as giving rise to a module category. We will then show that symmetries of the underlying building yield new module categories under the equivariantization construction. As a special case, we recover the main results of Jones \cite{Jones21} when the symmetries are vertex simply transitive. When $n = 2k+1$ and $\Bbbk$ is algebraically closed, these will also be $Tilt(SL_{2k+1})$ module categories. 
\subsection{Building a module category}
\label{extending functor}
We introduce a new category whose structure relies on an $\tilde{A}_{n-1}$ building $\Delta$. We will then establish a connection between this category and $G(\Delta)$ as in definition \ref{GPA}.
\begin{definition}
\label{VecDelta}
For a locally finite building $\Delta$ of type $\tilde{A}_{n-1}$ with set of 0-simplices (i.e. vertices) $V(\Delta)$ the category $Vec(\Delta)$ is defined as the category where: 
\begin{itemize}
\item Objects are tuples $V = (V_i)_{i\in V(\Delta)}$ of vector spaces.
\item Morphisms from $V$ to $W$ are collections of linear transformations $(f_i: V_i \to W_i)_{i \in V(\Delta)}$. 
\item Composition of morphisms is a component wise operation. 
\end{itemize}
\end{definition}

While $Vec(\Delta)$ ties a vector space to each vertex of our building (and thus of $\Gamma_\Delta$), it does not give us any information about which vertices are connected. We would like to use the structure of the 1-simplices of $\Delta$ (i.e. the edges of $\Gamma_\Delta$) to construct a endofunctor on $Vec(\Delta)$ in a way that encodes the data of $\Gamma_\Delta$. Consider the following collection of functors. 
\begin{definition}
\label{FmFunctor}
For $m \in \{0, ..., n-1\}$, define the functor $F_m: Vec(\Delta) \to Vec(\Delta)$ on objects as $(F_m(V))_{i} = \oplus_{k \in E(m, i)} V_{k}$, where $E(m, i) = \{ k \in V(\Delta) \,\,| \text { there exists an edge } i \to k \text { with label } m \text{ in } \Gamma_\Delta\}$ and on morphisms as $(F_m(f))_i = \oplus_{k \in E(m,i)} f_k$. 
\end{definition}

Consider the composition of functors of this form. For example $$(F_{m_1}F_{m_2}(V))_{i} = \oplus_{j \in E(m_1, i)} (F_{m_1}(V))_{j} = \oplus_{j \in E(m_1, i)} (\oplus_{k \in E(m_2, j)} V_{k}).$$ This double sum on the right hand of side of this equality can equivalently be indexed by all paths starting at $i$ whose edges have labels $m_1$ and $m_2$ respectively. So we see that the composition of these functors is encoding paths whose edge labels have a certain type. Recall that the type of a path $a_1 \to a_2 \to ... \to a_k$ is $\sigma = (\sigma_1, ..., \sigma_{k-1})$ where the edge with source $a_i$ has label $\sigma_i$. We will write $F_\sigma$ for the composition of functors $F_{\sigma_1}...F_{\sigma_{k-1}}$. 
\\
\\
Now, we will consider a special class of objects of $Vec(\Delta)$ which we will later show serves in some sense as a generating set for the entire category. 

\begin{definition}
\label{simpleobjects}
For each vertex $a \in V(\Delta)$ define an object $\Bbbk_a \in Vec(\Delta)$ where $(\Bbbk_a)_b = \Bbbk$ if $b = a$ and $(\Bbbk_a)_b = 0$ otherwise. 
\end{definition}

Now, consider $F_{\sigma}(V)$ for some path type $\sigma$. Notice that for arbitrary $a \in V(\Delta)$, $(F_{\sigma}(V))_a$ is a direct sum $\bigoplus V_b$ for all vertices $b$ that are a $\sigma$-type path away from $a$. This means that $(F_{\sigma}(V))_a \simeq \bigoplus_{b} (\bigoplus_{i = 0}^{dim(V_b)} \Bbbk)$ for all compatible $b$. We show $F_{\sigma}$ commutes with direct products. So we can simply move them inside the direct product and direct sums, and study $(F_{\sigma}(\Bbbk_a))_b$ instead.
\begin{lemma}
For any path type $\sigma$, $F_\sigma(\bigoplus_{i\in I} V^{i}) \cong \bigoplus_{i\in I} F_\sigma(V^{i})$ for all $V^{i}\in Ob(Vec(\Delta))$. 
\end{lemma}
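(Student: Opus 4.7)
The plan is to reduce the claim to the single-step case and then induct on the length of $\sigma$. The key observation is that $F_\sigma$ is by definition a composition $F_{\sigma_1} \circ F_{\sigma_2} \circ \cdots \circ F_{\sigma_{k-1}}$, so since any composition of functors preserves natural isomorphisms, it suffices to prove that each $F_m$ individually commutes with arbitrary direct sums. Once this base case is in hand, one obtains the lemma by a trivial induction on the length of the path type, applying the base case to the outermost functor and the inductive hypothesis to the remaining composition.

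For the base case, I would check the isomorphism componentwise. Fix $j \in V(\Delta)$. Unwinding the definition of $F_m$ from Definition \ref{FmFunctor}, the left-hand side becomes
\[
\bigl(F_m(\bigoplus_{i \in I} V^i)\bigr)_j \;=\; \bigoplus_{k \in E(m, j)} \Bigl(\bigoplus_{i \in I} V^i\Bigr)_k \;=\; \bigoplus_{k \in E(m,j)} \bigoplus_{i \in I} V^i_k,
\]
while the right-hand side becomes
\[
\Bigl(\bigoplus_{i \in I} F_m(V^i)\Bigr)_j \;=\; \bigoplus_{i \in I} \bigoplus_{k \in E(m,j)} V^i_k.
\]
These two iterated direct sums agree by the standard associativity and commutativity of direct sums of vector spaces, giving a canonical component-wise isomorphism $\varphi_j$. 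The collection $(\varphi_j)_{j \in V(\Delta)}$ is then a morphism in $Vec(\Delta)$, and naturality in the $V^i$ is immediate since on morphisms $F_m$ acts componentwise by the same direct-sum reshuffling.

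I do not anticipate any real obstacle here; the statement is essentially a bookkeeping fact about commuting direct sums, and the only thing worth flagging is that local finiteness of $\Delta$ guarantees $E(m,j)$ is a finite set, so no subtle issue about products-versus-coproducts arises when exchanging the two direct sums. The inductive step then reads: assuming the result for $\sigma' = (\sigma_2, \ldots, \sigma_{k-1})$,
\[
F_\sigma\bigl(\bigoplus_{i \in I} V^i\bigr) = F_{\sigma_1}\bigl(F_{\sigma'}(\bigoplus_{i \in I} V^i)\bigr) \cong F_{\sigma_1}\bigl(\bigoplus_{i \in I} F_{\sigma'}(V^i)\bigr) \cong \bigoplus_{i \in I} F_{\sigma_1}\bigl(F_{\sigma'}(V^i)\bigr) = \bigoplus_{i \in I} F_\sigma(V^i),
\]
where the first isomorphism uses the inductive hypothesis and the second uses the base case.
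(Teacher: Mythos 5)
Your proposal is correct and follows essentially the same route as the paper: the paper also verifies the isomorphism componentwise for a single $F_m$ by exchanging the two direct sums over $E(m,j)$ and $I$, and then passes to general $F_\sigma$ by composition. Your version is slightly more explicit about the induction on the length of $\sigma$ and about why local finiteness makes the exchange of sums harmless, but the substance is identical.
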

\begin{proof}
We will show this isomorphism component-wise. For a sum $\bigoplus_{i\in I} V^{i}$ of objects in $V(\Delta)$ and a functor $F_m$ for $m \in \{0, ..., n-1\}$, 
$$(F_m(\bigoplus_{i \in I}V^{i}))_j = \bigoplus_{k \in E(m, j)}(\bigoplus_{i \in I} V^{i})_k = \bigoplus_{k \in E(m, j)}(\bigoplus_{i \in I} V^{i}_k) $$ We can then switch the order of sums to see that $$\bigoplus_{k \in E(m, j)}(\bigoplus_{i\in I} V^{i}_k) = \bigoplus_{i\in I}( \bigoplus_{k \in E(m, j)}V^{i}_k)) = \bigoplus_{i\in I} (F_m(V^{i}))_j$$
So we have the equality for each component of $F_m$ and therefore in general. 
\end{proof}
We will see that these $F_{\sigma}$ form a set of distinguished objects in $End(Vec(\Delta))$ that allow us to embed $G(\Delta)$ into $End(Vec(\Delta))$. To this end, we have the following theorem, which gives us a correspondence between morphisms in $G(\Delta)$ and $End(Vec(\Delta))$.
\begin{theorem}
\label{FmNatTransAreGPAMaps}
For functors $F_\sigma$ as defined in definition \ref{FmFunctor}, $$Nat(F_{\sigma}, F_{\tau}) \widetilde{=} Hom_{G(\Delta)}(\sigma, \tau)$$ as $\Bbbk$-vector spaces. 
\end{theorem}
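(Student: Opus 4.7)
The strategy is to probe natural transformations against the distinguished objects $\Bbbk_c$ from Definition~\ref{simpleobjects}. Every object $V \in Vec(\Delta)$ decomposes as $V \cong \bigoplus_{c \in V(\Delta),\, i \in B_c} \Bbbk_c$ for a chosen basis $B_c$ of $V_c$. Since the preceding lemma shows that $F_\sigma$ commutes with arbitrary direct sums, and since $Hom_{Vec(\Delta)}(\Bbbk_c, \Bbbk_{c'}) = 0$ for $c \neq c'$, a natural transformation $\eta \in Nat(F_\sigma, F_\tau)$ is determined by, and freely prescribable by, its components on the subfamily $\{\Bbbk_c\}_{c \in V(\Delta)}$.

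Next, I would compute the relevant fibers. Unfolding Definition~\ref{FmFunctor}, $(F_\sigma(\Bbbk_c))_a$ is an iterated direct sum in which only those indexing sequences corresponding to a $\sigma$-type path from $a$ that terminates at $c$ contribute a copy of $\Bbbk$; all other terms vanish because $(\Bbbk_c)_b = 0$ for $b \neq c$. Hence $(F_\sigma(\Bbbk_c))_a$ has a canonical basis consisting of the $\sigma$-type paths from $a$ to $c$ in $\Gamma_\Delta$, and similarly for $\tau$. A component $\eta_{\Bbbk_c, a}\colon (F_\sigma(\Bbbk_c))_a \to (F_\tau(\Bbbk_c))_a$ is therefore equivalent to assigning a scalar in $\Bbbk$ to every pair $(p_1, p_2)$ with $type(p_1) = \sigma$, $type(p_2) = \tau$, sharing start $a$ and end $c$. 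Comparing with Definition~\ref{GPA}, this is exactly the data of a morphism $\sigma \to \tau$ in $G(\Delta)$, namely a $\Bbbk$-linear functional on pairs of paths with matching endpoints.

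The candidate isomorphism $\Phi \colon Nat(F_\sigma, F_\tau) \to Hom_{G(\Delta)}(\sigma, \tau)$ then sends $\eta$ to the functional whose value on a pair $(p_1, p_2)$ with common endpoints $a, c$ is the corresponding matrix entry of $\eta_{\Bbbk_c, a}$; it is manifestly $\Bbbk$-linear and injective. The inverse $\Psi$ specifies the components $\eta_{\Bbbk_c, a}$ directly from the prescribed scalars of $f$ and extends to arbitrary $V$ via the direct-sum decomposition above. The one point that requires care, and the main obstacle of the proof, is checking that $\Psi(f)$ genuinely is natural; this reduces to the essentially tautological observation that any morphism of $Vec(\Delta)$ decomposes into scalar maps between same-labelled generators $\Bbbk_a \to \Bbbk_a$, so the naturality squares commute by the $\Bbbk$-linearity of each $\eta_{\Bbbk_a, b}$ together with the direct-sum compatibility supplied by the preceding lemma.
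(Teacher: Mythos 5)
Your proposal is correct and follows essentially the same route as the paper: reduce to the generating objects $\Bbbk_c$ via the direct-sum lemma, identify the fibers $(F_\sigma(\Bbbk_c))_a$ with the span of $\sigma$-type paths from $a$ to $c$, and read off the matrix entries of the components as the values of the linear functional defining a morphism in $G(\Delta)$. If anything, you are slightly more explicit than the paper about why the components on the $\Bbbk_c$ are freely prescribable and why naturality of the inverse assignment holds, which is a welcome addition rather than a deviation.
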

\begin{proof}
Consider fixed path types $\sigma$ and $\tau$ and choose an arbitrary natural transformation $\eta: F_\sigma \rightarrow F_\tau$. Notice that $\eta$ is composed of maps $\eta_V$ for each $V \in Vec(\Delta)$. Each $\eta_V$ is further broken down into component maps $(\eta_V)_{a}$ for each vertex $a \in V(\Delta)$. By the previous paragraph, we need only consider the maps $(\eta_{\Bbbk_a})_b$ for vertices $a$ and $b$. We can then recover the overall structure of the transformation from these maps. 

First, we see that if $\sigma = (\sigma_1)$, then $(F_\sigma(\Bbbk_a))_b = \bigoplus_{c \in E(b, \sigma_1)} (\Bbbk_a)_c$. which is $\Bbbk$ if there exists an edge $b \to a$ of type $\sigma_1$ and 0 otherwise. So for a path type $\sigma$ of arbitrary length, we see that $(F_\sigma(\Bbbk_a))_b = \bigoplus_{p \in P(\sigma, b,a)} \Bbbk$, where we define $P(\sigma, b,a)$ as the set of paths of type $\sigma$ that start at $b$ and end at $a$. Notice that $\bigoplus_{p \in P(\sigma, b, a)} \Bbbk \simeq \Bbbk[P(\sigma, b,a)]$, the $\Bbbk$ vector space with $\sigma$ paths between $b$ and $a$ as a basis. So $(F_\sigma(\Bbbk_a))_b \simeq \Bbbk[P(\sigma, b,a)]$. Similarly, $(F_\tau(\Bbbk_a))_b \simeq \Bbbk[P(\tau, b,a)]$. So we can think of $(\eta_{\Bbbk_a}))_b$ as a map from the $ \Bbbk$-linear span of $P(\sigma, b, a)$ to the $\Bbbk$-linear span of $P(\tau, b,a)$. Since $\Delta$ is locally finite, these are both finite dimensional vector spaces. This yields a finite matrix $M = (m_{q,p})_{p \in P(\sigma, b,a), q \in P(\tau, b,a)}$ for $(\eta_{\Bbbk_a})_b$. For every pair $(p,q)$ we have a chosen element of $\Bbbk$, namely the matrix entry $m_{q,p}$. We can rearrange data we are given to form a linear functional from the $\Bbbk$-vector space spanned by the elements of $P(\sigma, b,a) \times P(\tau, b,a)$ to $\Bbbk$. 

Now, we collect all of the data from all choices of $b$ in $(\eta_{\Bbbk_a})_b$. So we can determine a functional from the space spanned by all matched pairs of paths with types $(\sigma, \tau)$ with final vertex $a$. Furthermore, the collection of maps $\eta_{\Bbbk_a}$ for all vertices $a \in V(\Delta)$ then encodes the data of a linear functional $f_{\sigma, \tau}$ from the space spanned by all matched paths of type $(\sigma, \tau)$ (call the matrix we get from this aggregation $M(\eta)$). This functional is uniquely determined by our choices of matrices $M$ for each pair of vertices $(a,b)$. On the other hand, for an arbitrary functional $f: \Bbbk[(p_1,p_2): type(p_1) = \sigma, type(p_2) = \tau] \rightarrow \Bbbk$, we can determine entries for the matrix $M$ for a given $a$ and $b$ by saying $M_{q, p} = f(qp)$. Now, $f$ is by definition in $Hom(\sigma, \tau) \in G(\Delta)$. So, this is an explicit construction of the isomorphism between $Hom_{G(\Delta)}(\sigma, \tau)$ and $Nat(F_\sigma, F_\tau)$ as desired. 
\end{proof}

Now, if $\eta \in Nat(F_{\sigma_1}, F_{\tau_1})$ and $\mu \in Nat(F_{\sigma_2}, F_{\tau_2})$, we consider the natural transformation $\eta \otimes \mu \in Nat(F_{\sigma_1}F_{\sigma_2}, F_{\tau_1}F_{\tau_2})$. We can describe a component of this transformation as the following composition
$$(F_{\sigma_1}F_{\sigma_2}(\Bbbk_m))_n \xrightarrow{(\eta_{F_{\sigma_2}(\Bbbk_m)})_n} (F_{\tau_1}F_{\sigma_2}(\Bbbk_m))_n \xrightarrow{F_{\tau_1}(\mu_{\Bbbk_m})_n} (F_{\tau_1}F_{\tau_2}(\Bbbk_m))_n.$$ 

We can identify $(F_{\sigma_1}F_{\sigma_2}(\Bbbk_m))_n$ with the $\Bbbk$-vector space of paths of type $\sigma_1\sigma_2$ (here listing path types consecutively means concatenation of lists) from $n$ to $m$, and similarly with $(F_{\tau_1}F_{\sigma_2}(\Bbbk_m))_n$ and $(F_{\tau_1}F_{\tau_2}(\Bbbk_m))_n$. So we can represent these maps by matrices as before, and their composition by matrix multiplication. Thus, we have matrices $M = M(\eta_{F_{\sigma_2}(\Bbbk_m)})_n)$ and $M' = M(F_{\tau_1}(\mu_{\Bbbk_m})_n)$. 

\begin{lemma}
\label{CompositionStructure}
For the matrices $M$ and $M'$ as defined above, we have the following 
$$M_{q'p', qp} = \begin{cases} 
   0 & p \neq p'\\
   M(\eta)_{q',q} & p = p'
\,\,\,\,\,\,\,
  \end{cases} M'_{q''p'', q'p'} = \begin{cases} 
   0 & q' \neq q''\\
   M(\mu)_{p'',p'} & q'' = q'\\
  \end{cases}$$
\end{lemma}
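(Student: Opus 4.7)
The plan is to pin down each matrix entry by exhibiting the relevant map as block-diagonal in one of the two path indices, with blocks given by evaluation of $\eta$ or $\mu$ at the appropriate simple object $\Bbbk_a$. Writing $s(p)$ for the starting vertex of an inner path $p$ (of type $\sigma_2$ ending at $m$) and $t(q')$ for the terminal vertex of an outer path $q'$ (of type $\tau_1$ starting at $n$), the pair $(q,p)$ in the paper's indexing parameterizes an inner path $p$ together with a path $q$ of type $\sigma_1$ from $n$ to $s(p)$; similarly for $(q',p')$ and $(q'',p'')$.

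For the first matrix $M = M((\eta_{F_{\sigma_2}(\Bbbk_m)})_n)$, I would begin by noting the direct-sum decomposition
\[
F_{\sigma_2}(\Bbbk_m) \;\cong\; \bigoplus_{p} \Bbbk_{s(p)},
\]
with $p$ ranging over paths of type $\sigma_2$ ending at $m$; this is immediate from the definition of $F_{\sigma_2}$. Since $\eta$ is a natural transformation of additive $\Bbbk$-linear functors, it commutes with finite direct sums, so $\eta_{F_{\sigma_2}(\Bbbk_m)} = \bigoplus_{p} \eta_{\Bbbk_{s(p)}}$. Taking the $n$-component makes $(\eta_{F_{\sigma_2}(\Bbbk_m)})_n$ block-diagonal in the inner index $p$, with $p$-block equal to $(\eta_{\Bbbk_{s(p)}})_n$. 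This forces $M_{q'p',qp} = 0$ when $p \neq p'$, and when $p = p'$ the entry is the matrix coefficient of $(\eta_{\Bbbk_{s(p)}})_n$ at the pair $(q,q')$, which by the explicit construction of $M(\eta)$ in the proof of Theorem \ref{FmNatTransAreGPAMaps} is $M(\eta)_{q',q}$.

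For the second matrix $M' = M(F_{\tau_1}(\mu_{\Bbbk_m})_n)$, I would instead unwind $F_{\tau_1}$ applied to a morphism by iterating the one-step formula $(F_{m_1}(g))_n = \bigoplus_{k \in E(m_1,n)} g_k$ across the length of $\tau_1$. This yields
\[
(F_{\tau_1}(g))_n \;=\; \bigoplus_{q'} g_{t(q')},
\]
where $q'$ ranges over paths of type $\tau_1$ starting at $n$. Setting $g = \mu_{\Bbbk_m}$ makes $F_{\tau_1}(\mu_{\Bbbk_m})_n$ block-diagonal in the outer index $q'$, with $q'$-block equal to $(\mu_{\Bbbk_m})_{t(q')}$ acting on the space spanned by $\sigma_2$-paths from $t(q')$ to $m$. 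Hence $M'_{q''p'',q'p'} = 0$ unless $q' = q''$, in which case the entry is the matrix coefficient of $(\mu_{\Bbbk_m})_{t(q')}$ at the pair $(p',p'')$, namely $M(\mu)_{p'',p'}$.

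The main point requiring care is bookkeeping: the global matrices $M(\eta)$ and $M(\mu)$ of Theorem \ref{FmNatTransAreGPAMaps} are assembled from the data $(\eta_{\Bbbk_a})_b$ and $(\mu_{\Bbbk_a})_b$ over all pairs of vertices, and I must verify that restricting to the slice $b=n$, $a=s(p)$ (for $\eta$) and to $b=t(q')$, $a=m$ (for $\mu$) genuinely recovers the stated entries of those global matrices. This is a direct check against the construction in that proof, and I expect it to be the only real subtlety.
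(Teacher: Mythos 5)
Your proof is correct, and your treatment of $M'$ coincides with the paper's: both unwind $F_{\tau_1}$ applied to the morphism $\mu_{\Bbbk_m}$ into a componentwise direct sum indexed by $\tau_1$-paths out of $n$, which immediately forces $q''=q'$. For $M$, however, you take a genuinely different (and arguably cleaner) route. The paper applies naturality of $\eta$ to the elementary endomorphisms $e^{p_0,p_0'}$ of $F_{\sigma_2}(X)$ that send one basis path to another, derives the matrix identity $\overline{E}M = ME$, and extracts the vanishing of the off-diagonal entries by expanding both sides entrywise; it does this for a general object $X$, and the identification of the surviving entries with $M(\eta)_{q',q}$ is left implicit. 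You instead decompose $F_{\sigma_2}(\Bbbk_m) \cong \bigoplus_p \Bbbk_{s(p)}$ into simples and invoke the standard fact that a natural transformation between additive functors is block-diagonal with respect to such a direct-sum decomposition, with $p$-block $(\eta_{\Bbbk_{s(p)}})_n$. This is the same naturality input in disguise (your inclusion and projection maps generate the paper's $e^{p_0,p_0'}$ in the case $X=\Bbbk_m$), but it buys a transparent identification of the diagonal blocks with the slices of $M(\eta)$ assembled in the proof of Theorem \ref{FmNatTransAreGPAMaps} --- which is precisely the half of the statement the paper's printed computation does not spell out --- at the cost of only covering $X=\Bbbk_m$ rather than general $X$ (which suffices for the lemma as stated). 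The bookkeeping point you flag at the end, matching the slices $b=n$, $a=s(p)$ and $b=t(q')$, $a=m$ against the construction of $M(\eta)$ and $M(\mu)$, is indeed a direct check and poses no difficulty.
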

\begin{proof}
We will show this for a general object $X$ in $Vec(\Delta)$ and as a result will have the desired statement. We begin with showing the first claim. Define a map $e^{p_0,p_0'}: F_{\sigma_2}(X) \rightarrow F_{\sigma_2}(X)$ as the map from the vector space of $\sigma_2$ paths to itself sending the path $p_0$ to $p'_0$ and every other path to 0. Now, $F_{\sigma_1}(e_{p_0, p'_0})$ will act on $\sigma_1\sigma_2$ paths by fixing the $\sigma_1$ path and applying $e_{p_0, p_0'}$ to the $\sigma_2$ path. Let $E^{p_0 \rightarrow p'_0}$ be the matrix of $F_{\sigma_1}(e_{p_0, p_0'})$. So we have that $E^{p_0 \rightarrow p'_0}_{q_1p_1,q_1p_2} = \delta_{q_1 = q_2} \delta_{p_1 = p_0'} \delta_{p_2 = p_0}$. Since we will consider a fixed $p_0$ and $p_0'$ in the argument, take $E = E^{p_0\rightarrow p_0'}$. Now, we can repeat this process with $F_{\tau_1}(e_{p_0, p
_0})$ to get a matrix $\overline{E} = \overline{E}^{p_0 \rightarrow p_0'}$ for this map where $\overline{E}_{q'_1p_1, q'_2p_2} = \delta_{q'_1 = q'_2} \delta_{p'_1 = p_0'} \delta_{p'_2 = p_0}$.

Now, consider the following naturality diagram. 
$$\begin{tikzcd}
F_{\sigma_1}(F_{\sigma_{2}}(X)) \arrow[r, "F_{\sigma_1}(e_{p_0,p_0'})"] \arrow[d, "\eta_{F_{\sigma_{2}}(X)}"] & F_{\sigma_1}(F_{\sigma_2}(X)) \arrow[d, "\eta_{F_{\sigma_{2}}(X)}"] \\
F_{\tau_1}(F_{\sigma_{2}}(X)) \arrow[r, "F_{\tau_1}(e_{p_0,p_0'})"] & F_{\tau_1}(F_{\sigma_{2}}(X)) 
\end{tikzcd}$$ 
Commutivity of this diagram is simply saying that $\overline{E}M = ME$. This equality will put some restrictions on the entries of $M$. Now, we will expand the matrix multiplication at an entry and see that 
$$\overline{E}M_{q'_1p_1, q_2p_2} = \sum_{q'p \in P(\tau_1\sigma_2)} \overline{E}_{q'_1p_1, q'p}M_{q'p, q_2p_2} = \sum_{q'p \in P(\tau_1\sigma_2)} \delta_{q'_1 = q'} \delta_{p_1 = p_0'} \delta_{p = p_0} M_{q'p, q_2p_2} = M_{q_1'p_0, q_2p_2}$$ where we also carry the information that $p_1 = p_0'$. In all other cases, this matrix entry will be 0. Now, see also that 
$$ME_{q_1'p_1, q_2p_2} = \sum_{qp \in P(\sigma_1, \sigma_2)} M_{q_1'p_1, qp}E_{qp, q_2,p_2} =\sum_{qp \in P(\sigma_1, \sigma_2)} M_{q'_1p_1, qp} \delta_{q = q_2} \delta_{p = p_0'}\delta{p_2 = p_0} = M_{q_1'p_1, q_2p_0'} $$ where we carry the extra condition that $p_2 = p_0$. Now, we combine these expressions to see that $ M_{q_1'p_0, q_2p_2} = M_{q_1'p_1, q_2p_0'}$ where we must have $p_1 = p_0'$ and $p_2 = p_0$ for this entry to be non-zero. This last condition shows that our matrix entry can be rewritten as $M_{q'p_0, q_2p_0}$. So any non-zero entry of $M$ will have fixed $\sigma_2$ component. 

Now, we show this fact for $M'$. To see this, see that $F_{\tau_1}(\mu_{\Bbbk_m})_n : F_{\tau_1}(F_{\sigma_2}({\Bbbk_m}))_n \rightarrow F_{\tau_1}(F_{\tau_2}({\Bbbk_m}))_n$. Now, expanding this statement based on the action of $F_{\tau_1}$, we get this alternate form of the map. 
$$\bigoplus_{k \to n \in P(\tau_1)} (\mu_{\Bbbk_m})_k:\bigoplus_{k \to n \in P(\tau_1)} F_{\sigma_2}(\Bbbk_m)_k \rightarrow \bigoplus_{k \to n \in P(\tau_1)} F_{\tau_2}(\Bbbk_m)_k $$
Since this map is a compontent wise map, there is no way to map from one compentent (that is a fixed $\tau_1$ path) to another. So we must have that $M'$ has non-zero entries only when $q'' = q'$. 
\end{proof}
Now, consider the matrix $M'M$. See that $$M'M_{q''p'', qp} = \sum_{q'p' \in P(\tau_1,\sigma_2)} M'_{q''p''q'p'}M_{q'p', qp} =$$ $$ \sum_{q'p' \in P(\tau_1,\sigma_2)'} M(\mu)_{p'',p'}M(\eta)_{q'q} \delta_{p'= p} \delta_{q' = q''} = M(\mu)_{p'',p'}M(\eta)_{q'q}$$ since all other terms have either $p' \neq p$ or $q' \neq q''$. As before, these maps on components generalize to way to describe $\eta \otimes \mu$ as a matrix. Now, see that under the image of the isomorphism described earlier, $$f_{\eta} \otimes f_{\mu} (q_1p_1, q_2p_2) = f_\eta(q_1, q_2)f_\mu(p_1p_2)$$ by definition of the tensor product in $G(\Delta)$. So we will have $f_\eta \otimes f_\mu$ acts the same way as $f_{\eta \otimes \mu}$. 

In $End(Vec(\Delta))$, $F_{\sigma} \otimes F_{\tau} = F_{\sigma}F_{\tau}$ is a sum over paths of type $\sigma\tau$. Also, see that in $G(\Delta)$, $\sigma \otimes \tau$ is simply the concatenated sequence $(\sigma_1, ...., \sigma_n, \tau_1, ..., \tau_m)$ so that $F_{\sigma} \otimes F_{\tau} = F_{\sigma \otimes \tau}$. From this, we can extend theorem \ref{FmNatTransAreGPAMaps} to the following. 
\begin{theorem}
\label{GPAandEndVecAreME}
There exists a monoidal embedding $G(\Delta) \rightarrow End(Vec(\Delta))$.
\end{theorem}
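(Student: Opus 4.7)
The plan is to define the functor $\Phi : G(\Delta) \to End(Vec(\Delta))$ on objects by $\sigma \mapsto F_\sigma$ (where $F_\emptyset = \mathrm{id}_{Vec(\Delta)}$), and on morphisms using the vector-space isomorphism $Hom_{G(\Delta)}(\sigma,\tau) \cong Nat(F_\sigma, F_\tau)$ from Theorem \ref{FmNatTransAreGPAMaps}. Once $\Phi$ is declared, I would have to check four things in sequence: (i) $\Phi$ is well-defined as a functor, (ii) $\Phi$ is monoidal, (iii) $\Phi$ is faithful, and (iv) $\Phi$ is an embedding in the appropriate sense. Faithfulness is automatic from Theorem \ref{FmNatTransAreGPAMaps}, so most of the work is (i) and (ii).

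For functoriality, the identity check is easy: the identity on $\sigma$ in $G(\Delta)$ is the functional sending a matched pair $(p_1,p_2)$ to $\delta_{p_1=p_2}$, which under the isomorphism corresponds to the identity matrix on each local space $\Bbbk[P(\sigma,b,a)]$, hence to $\mathrm{id}_{F_\sigma}$. The composition check is the heart of the argument. Given $f : \sigma \to \tau$ and $g : \tau \to \omega$ in $G(\Delta)$, with associated natural transformations $\eta_f$ and $\eta_g$, I would evaluate $\eta_g \circ \eta_f$ at the generating objects $\Bbbk_a$ and at components indexed by vertices $b$. Each of $(\eta_f)_{\Bbbk_a})_b$ and $((\eta_g)_{\Bbbk_a})_b$ is represented as a matrix over bases of $P(\sigma,b,a)$, $P(\tau,b,a)$ and $P(\omega,b,a)$, and their vertical composition is matrix multiplication, i.e. a sum over intermediate $\tau$-paths $p'$ from $b$ to $a$ of products of matrix entries. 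This is precisely formula (\ref{gpa composition}) for $g \circ f$ applied to the pair $(p_1,p_2)$, so the two are equal.

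For monoidality, the object equality $F_\sigma F_\tau = F_{\sigma\otimes\tau}$ is immediate from Definition \ref{FmFunctor}, since iterating $F_{m_1}\cdots F_{m_k}$ decomposes into direct sums indexed exactly by paths of type $\sigma\otimes\tau$. For morphisms, Lemma \ref{CompositionStructure} together with the matrix identity $M'M_{q''p'',qp} = M(\mu)_{p'',p}\,M(\eta)_{q'',q}$ derived in the paragraph just before the theorem shows that the matrix representing $\eta \otimes \mu$ satisfies $f_{\eta\otimes\mu}(q_1p_1,q_2p_2) = f_\eta(q_1,q_2)\,f_\mu(p_1,p_2)$, which is the definition of the monoidal product in $G(\Delta)$. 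The unit constraint is trivial since $F_\emptyset = \mathrm{id}$.

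The main obstacle I expect is bookkeeping in the composition verification: ensuring that the reduction from an arbitrary object $V \in Vec(\Delta)$ to the generating objects $\Bbbk_a$ is legitimate. The preceding lemma that $F_\sigma$ commutes with direct sums allows one to reduce $V$ to a direct sum of $\Bbbk_a$'s, but one needs naturality of $\eta_f,\eta_g$ in $V$ (applied to the inclusions $\Bbbk_a \hookrightarrow V$ and the evident projections) to conclude that equality on all $\Bbbk_a$ implies equality on every $V$. Once this reduction is justified, faithfulness (injectivity on hom-spaces) follows directly from Theorem \ref{FmNatTransAreGPAMaps}, giving the monoidal embedding claimed.
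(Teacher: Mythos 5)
Your proposal is correct and follows essentially the same route as the paper: the paper gives no separate proof environment for this theorem, instead relying on Theorem \ref{FmNatTransAreGPAMaps} for the hom-space isomorphism (hence faithfulness), the identity $F_\sigma F_\tau = F_{\sigma\otimes\tau}$ for objects, and Lemma \ref{CompositionStructure} with the subsequent matrix computation for monoidality on morphisms, exactly as you do. Your explicit verification that vertical composition of natural transformations reproduces the composition formula (\ref{gpa composition}), and your attention to the reduction from general $V$ to the objects $\Bbbk_a$, fill in steps the paper leaves implicit but introduce no new ideas.
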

This theorem means that when our categories are defined over a field $\Bbbk$ whose characteristic $p \geq n-1$ is such that $q \equiv 1 \mod p$ we can extend our functor $Web(SL_n^{-}) \rightarrow G(\Delta)$ as stated in section 5 to one from $Web(SL_n)^{-}$ to $End(Vec(\Delta))$. Theorems \ref{GPAFunctorThm} and \ref{GPAandEndVecAreME} together give us the proof of theorem A. 

\subsection{Equivariantization}

Now, suppose that $G$ is a group acting on $\Delta$ by permuting the vertices and preserving adjancency. We can use the action of $G$ to create other interesting $Web(SL_n^{-})$-module categories. Recall $Cat(G)$ is the monoidal category where objects are group elements, the only morphisms are identities and the tensor product is group multiplication. Also, recall $Aut(\mathcal{C})$ is the category of auto-equivalences on $\mathcal{C}$ where morphisms are natural isomorphisms of functors (See \cite[Section 2.7]{EGNO15} for these definitions). 
\begin{definition}
\label{GroupActionCat}
Given a group $G$ and a category $\mathcal{C}$, an action of $G$ on $\mathcal{C}$ is a monoidal functor $A: Cat(G) \rightarrow Aut(C)$. We denote this $g \mapsto A_g \in Aut(\mathcal{C})$, and the ``tensorator" isomorphisms $\gamma_{g,h}: A_g \circ A_h \simeq A_{gh}$ for all $g,h \in G$. 
\end{definition}
\begin{definition}
\label{EquivObject}
An $G$-equivariant object $(X, u)$ in $\mathcal{C}$ is $X \in Ob(\mathcal{C})$ together with a choice of a family of isomorphisms $u = (u_g: A_g(X) \to X )_{g \in G}$ such that the following diagram commutes. 
$$\begin{tikzcd}
A_g(A_h(X)) \arrow[r, "A_g(u_h)"] \arrow[d, "\gamma_{g,h}"] & A_g(X) \arrow[d, "u_g"] \\
A_{gh}(X) \arrow[r, "u_{gh}"] & X
\end{tikzcd}$$ 
\end{definition}

\begin{definition} 
\label{EquivCat}
The collection of $G$-equivariant objects of $\mathcal{C}$ forms a category $\mathcal{C}^{G}$, called the $G$-equivariantization of $\mathcal{C}$. Morphisms in $Hom_{\mathcal{C}^{G}}((X,u), (Y,v))$ are simply morphisms $f$ in $Hom_\mathcal{C}(X,Y)$ such that the following diagram commutes. 
$$\begin{tikzcd}
A_g(X) \arrow[r, "A_g(f)"] \arrow[d, "u_g"] & A_g(Y) \arrow[d, "v_g"] \\
X\arrow[r, "f"] & Y
\end{tikzcd}$$ 
\end{definition}
Our goal is to extend the functor constructed in Theorem \ref{GPAandEndVecAreME} to one from $Web(SL_n^{-})$ to $End(Vec(\Delta)^{G})$ and then explore the effects of different $G$ actions on $\Delta$. As we begin, note that an element $g\in G$ acts on $\Delta$ by permuting the vertices. Thus, $(A_g(V))_i = V_{g^{-1}(i)}$ for every vertex $i \in V(\Delta)$. As $A_g(A_h(V))$ and $A_{gh}(V)$, we can choose $\gamma_{g,h}$ to be the identity maps. Also, recall that a type rotating action of $G$ on $\Delta$ is one where $type(g(i)) = type(i) + c \mod n$ for some fixed $c$ and every $i \in V(\Delta)$, $g \in G$. Our next task is to find an analogue of our functors $F_a$ from definition \ref{FmFunctor} on $Vec(\Delta)^{G}$. We will make use of the following lemma in our discussion. 

\begin{lemma}
\label{CommutingFunctors}
If $G$ acts on $\Delta$ by a type rotating action, then the functors $A_g$ that come from the corresponding action on $Vec(\Delta)$ commute with the functors $F_m$ as defined in \ref{FmFunctors} for all $g \in G$, $m \in \{0, ..., n-1\}$. That is, $F_mA_g = A_gF_m$ for all $m \in \{0, ..., n-1\}$ and $g \in G$. 
\end{lemma}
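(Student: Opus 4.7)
The plan is to verify the equality $F_m A_g = A_g F_m$ directly on objects and morphisms by unpacking the definitions, the crux being a bijection between the indexing sets that define the direct sum decompositions of each side. First I would compute, for an arbitrary vertex $i \in V(\Delta)$ and object $V \in Vec(\Delta)$, the $i$-th components of both functors. On the left, $(F_m A_g V)_i = \bigoplus_{k \in E(m,i)} (A_g V)_k = \bigoplus_{k \in E(m,i)} V_{g^{-1}(k)}$, while on the right, $(A_g F_m V)_i = (F_m V)_{g^{-1}(i)} = \bigoplus_{k \in E(m,g^{-1}(i))} V_k$. So it suffices to show that the map $k \mapsto g^{-1}(k)$ defines a bijection $E(m,i) \to E(m, g^{-1}(i))$.

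The key step is to verify that this map is well-defined and bijective, which reduces to showing that $g^{-1}$ (and hence $g$) preserves edge labels in $\Gamma_\Delta$. Since $G$ acts on $\Delta$ preserving adjacency, $g^{-1}$ sends edges of $\Gamma_\Delta$ to edges of $\Gamma_\Delta$, so the only question is the label. Recall from subsection \ref{GPAsection} that the label of an edge $x \to y$ is $\ell(x) - \ell(y) \bmod n$. Since the action is type-rotating, there is a constant $c$ (depending only on $g$) with $\ell(g^{-1}(x)) = \ell(x) - c \bmod n$ for every vertex $x$. Therefore the label of $g^{-1}(x) \to g^{-1}(y)$ equals $(\ell(x) - c) - (\ell(y) - c) = \ell(x) - \ell(y) \bmod n$, matching the label of $x \to y$. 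This gives the required bijection, and under it the two direct sums are literally indexed by the same family of vector spaces, so the component-wise identification is the identity on the underlying vector space.

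Next I would check the equality on morphisms. By Definition \ref{FmFunctor}, $F_m(f)$ acts as $(F_m(f))_i = \bigoplus_{k \in E(m,i)} f_k$, and $A_g$ simply relabels components by $g^{-1}$. Applying both compositions to a morphism $f$ and using the same bijection $k \leftrightarrow g^{-1}(k)$ from above shows that $(F_m A_g (f))_i$ and $(A_g F_m (f))_i$ agree summand by summand. Finally, I would note that the tensorators $\gamma_{g,h}$ for the $G$-action were chosen to be identities, so no further compatibility of associators is needed, and the equality of functors (not merely natural isomorphism) holds strictly.

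The main obstacle here is really just the careful bookkeeping of the type-rotating hypothesis: without it, $g$ would permute edge labels nontrivially and $F_m A_g$ would more generally equal $A_g F_{m'}$ for some relabelled $m'$, breaking the statement as written. Once the label-preservation is in hand, the rest is a direct and essentially formal verification.
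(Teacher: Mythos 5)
Your proposal is correct and follows essentially the same route as the paper: compute both components as direct sums and observe that the type-rotating hypothesis makes $k \mapsto g^{-1}(k)$ a label-preserving bijection $E(m,i) \to E(m,g^{-1}(i))$. You actually spell out the label-preservation computation and the check on morphisms, both of which the paper leaves implicit ("But of course, this occurs for any type-rotating action"), so your version is if anything slightly more complete.
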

\begin{proof} We will show that $F_m(A_g(V))_i = A_g(F_m(V))_i$ for all $V \in Vec(\Delta)$ and $i \in V(\Delta)$. To do this, see that 
$$F_m(A_g(V))_i = \bigoplus_{k \in E(m, i)} (A_g(V))_k = \bigoplus_{k \in E(m, i)} V_{g^{-1}(k)}$$ and that $$A_g(F_m(V))_i = F_m(V)_{g^{-1}(i)} = \bigoplus_{k \in E(m, g^{-1}(i))} V_k$$
we have equality between these two expressions whenever $k \in E(m, i)$ implies that $g^{-1}(k) \in E(m, g^{-1}(i))$. But of course, this occurs for any type-rotating action of $G$. So we have that these two classes of functors commute. 
\end{proof}

Now, we turn our attention to the category $Vec(\Delta)^{G}$ and make the following definition, which will allow us to begin to extend our result in theorem \ref{GPAandEndVecAreME} to this category. 
\begin{definition}
\label{FmGFunctors}
For a given $m \in \{0,...,n-1\}$, define a functor $F_m^{G} \in End(Vec(\Delta)^{G})$ as acting on a object $(X,u)$ in the following way, $F_m^{G}(X,u) = (F_m(X), F_m(u))$, where $F_m(X)$ is as defined in \ref{FmFunctors} and $F_m(u)_g = F_m(u_g)$.
\end{definition}
We arrive at this definition for $F_m^{G}(u)$ in the following manner. By lemma \ref{CommutingFunctors}, we have that $A_g(F_m(X)) = F_m(A_g(X))$. So we have the following diagram 
$$\begin{tikzcd}
A_g(F_m(X)) \arrow[r, "F_m(u)_g"] \arrow[d, "="] & F_m(X)\\
F_m(A_g(X)) \arrow[ru, swap, "F_m(u_g)"]& 
\end{tikzcd}$$
This diagram shows us that the definition we made is the obvious choice for $F_m(u)$. Now we must verify that $F_m^{G}(X,u)$ is indeed an equivariant object. Recall that this means that we must show that the following diagram commutes.
$$\begin{tikzcd}[row sep=large, column sep=large]
A_g(A_h(F_m(X)) \arrow[r, "A_g(F_m(u)_h)"] \arrow[d, "="] & A_g(F_m(X)) \arrow[d, "F_m(u)_g"]\\
A_{gh}(F_m(X)) \arrow[r, "F_m(u)_{gh}"] & F_m(X)
\end{tikzcd}$$
To do this, we expand the diagram above as follows
{\normalfont \[\begin{tikzcd}
	& {} \\	
	\\
	{A_g(A_h(F_m(X))} &&&& {A_g(F_m(X))} \\
	& {A_g(F_m(A_h(X))} \\
	& {F_m(A_g(A_h(X))} && {F_m(A_g(X))} \\
	& {F_m(A_{gh}(X))} \\
	{A_{gh}(F_m(X))} &&&& {F_m(X)}
	\arrow["{A_g(F_m(u)_h)}", from=3-1, to=3-5]
	\arrow["{=}"', from=3-1, to=4-2]
	\arrow["{=}"', from=4-2, to=5-2]
	\arrow["{=}"', from=5-2, to=6-2]
	\arrow["{=}"', from=3-1, to=7-1]
	\arrow["{F_m(u)_g}"', from=3-5, to=7-5]
	\arrow["{F_m(u)_{gh}}"', from=7-1, to=7-5]
	\arrow["{=}"', from=7-1, to=6-2]
	\arrow["{F_m(u_{gh})}", from=6-2, to=7-5]
	\arrow["{F_m(A_g(u_h))}"', from=5-2, to=5-4]
	\arrow["{A_g(F_m(u_h))}"{description}, from=4-2, to=3-5]
	\arrow["{=}"{description}, from=3-5, to=5-4]
	\arrow["{F_m(u_g)}"{description}, from=5-4, to=7-5]
\end{tikzcd}\] }
We see that the pentagon on the left commutes because of equality. All three triangles and the top quadrilateral commute by our definition of $F_m(u)$. The bottom center quadrilateral commutes since $X$ is an equivariant object and this square is simply $F_m$ applied to the square in the definition \ref{EquivCat} and the top center quadrilateral commutes by Lemma \ref{CommutingFunctors}. So $F_m^{G}(X, u)$ is indeed an equivariant object as desired and $F_m^{G}$ for $m \in \{0,...,n-1\}$ gives us a collection of endofunctors on $Vec(\Delta)^{G}$. 

Now, equipped with our functors, we turn to the study of natural transformations between these functors. We want to show that a subcategory of $G(\Delta)$ can be mapped to these functors and thus embedded into $End(Vect(\Delta)^{G})$. Then we will show that the image of our $Web(SL_n^{-})$ maps are contained in this subcategory, and so this correspondence will allow us to build a functor from $Web(SL_n^{-})$ to $End(Vec(\Delta)^{G})$. 

Recall that the motivation for theorem \ref{GPAandEndVecAreME} was the isomorphism between $Nat(F_\sigma, F_\tau)$ and $Hom_{G(\Delta)}(\sigma, \tau)$. So we wish to find some condition on natural transformations in $Vec(\Delta)^{G}$ that allows us to construct a similar isomorphism between a subclass of morphisms in $G(\Delta)$ and these distinguished natural transformations. To do this, we must define what the $G$-action does to a natural transformation $\eta \in Nat(F_\sigma, F_\tau)$.
\begin{definition}
\label{GNatTrans}
For a natural transformation $\eta \in Nat(F_\sigma, F_\tau)$, define $g(\eta) \in Nat(F_\sigma^{G}, F_\tau^{G})$ as the natural transformation with components $g(\eta)_{(X, u)} = F_\tau(u_g)A_g(\eta_X)F_\sigma(u_g)^{-1}$.
\end{definition}

\begin{lemma}
\label{GInvariantMorphismsAreEquivNatTrans} If $\eta \in Nat(F_\sigma, F_\tau)$ is invariant under the action of $G$ on $Vec(\Delta)$ (that is, the component maps $\eta_X$ and $g(\eta)_{(X,u)}$ are equal for any choice of $u$), then $\eta$ induces a natural transformation $\eta^{G} \in Nat(F_\sigma^{G}, F_\tau^{G})$.
\end{lemma}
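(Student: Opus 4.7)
The plan is to define $\eta^G$ on an equivariant object $(X,u)$ by simply setting $\eta^G_{(X,u)} := \eta_X$, viewed as a morphism in $Vec(\Delta)$, and then to verify two things: first, that $\eta_X$ is actually a morphism in $Vec(\Delta)^G$ from $F_\sigma^G(X,u)$ to $F_\tau^G(X,u)$; second, that the family $(\eta^G_{(X,u)})$ assembles into a natural transformation between the endofunctors $F_\sigma^G$ and $F_\tau^G$.

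For the first point, by Definition \ref{EquivCat} I need to show that the square
\[
\begin{tikzcd}
A_g(F_\sigma(X)) \arrow[r,"A_g(\eta_X)"] \arrow[d,"F_\sigma(u)_g"'] & A_g(F_\tau(X)) \arrow[d,"F_\tau(u)_g"] \\
F_\sigma(X) \arrow[r,"\eta_X"'] & F_\tau(X)
\end{tikzcd}
\]
commutes for every $g \in G$, where $F_\sigma(u)_g = F_\sigma(u_g)$ and $F_\tau(u)_g = F_\tau(u_g)$ by Definition \ref{FmGFunctors}. Since $u_g$ is an isomorphism (part of the data of an equivariant object), so is $F_\sigma(u_g)$, and commutativity of the square is equivalent to the identity
\[
\eta_X \;=\; F_\tau(u_g)\circ A_g(\eta_X)\circ F_\sigma(u_g)^{-1}.
\]
The right-hand side is precisely $g(\eta)_{(X,u)}$ as defined in Definition \ref{GNatTrans}, so the equality is exactly the $G$-invariance hypothesis on $\eta$. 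Thus $\eta_X$ is a legitimate morphism $(F_\sigma(X), F_\sigma(u)) \to (F_\tau(X), F_\tau(u))$ in $Vec(\Delta)^G$.

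For the second point, naturality of $\eta^G$ asks that for every morphism $f: (X,u) \to (Y,v)$ in $Vec(\Delta)^G$ the square
\[
\begin{tikzcd}
F_\sigma(X) \arrow[r,"F_\sigma(f)"] \arrow[d,"\eta_X"'] & F_\sigma(Y) \arrow[d,"\eta_Y"] \\
F_\tau(X) \arrow[r,"F_\tau(f)"'] & F_\tau(Y)
\end{tikzcd}
\]
commutes in $Vec(\Delta)^G$. But a morphism in $Vec(\Delta)^G$ is in particular a morphism in $Vec(\Delta)$, and this square commutes there by the naturality of $\eta$ as an element of $\mathrm{Nat}(F_\sigma,F_\tau)$. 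Since $\eta^G$ is defined componentwise by the underlying $\eta$, naturality is immediate.

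The main conceptual step, and the only nontrivial one, is the first: recognizing that the invariance condition in Definition \ref{GNatTrans} is designed to be exactly the commutativity required by Definition \ref{EquivCat} applied to $\eta_X$, so that once the functors $F_\sigma^G$ and $F_\tau^G$ are set up with the correct equivariant structures (already verified above using Lemma \ref{CommutingFunctors}), the induced natural transformation costs nothing more. No further compatibility with the tensorators $\gamma_{g,h}$ needs checking here, since $\gamma_{g,h}$ is the identity in our setting and all coherence follows from that.
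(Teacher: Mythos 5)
Your proposal is correct and follows essentially the same route as the paper's proof: you unwind the $G$-invariance condition $\eta_X = F_\tau(u_g)\circ A_g(\eta_X)\circ F_\sigma(u_g)^{-1}$ into exactly the commuting square required by Definition \ref{EquivCat}, and then observe that naturality of $\eta^G$ is inherited directly from naturality of $\eta$. The extra remarks about the trivial tensorator are consistent with the paper's setup and do not change the argument.
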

\begin{proof}
Suppose we have $\eta \in Nat(F_\sigma, F_\tau)$ which is invariant under the action of $G$. This means that for every $g \in G$ and $X \in Vec(\Delta)$, we have that $\eta_X = F_\tau(u_g)A_g(\eta_X)F_\sigma(u_g)^{-1}$. This equality gives us that the following diagram commutes. 
$$\begin{tikzcd}
A_g(F_\sigma(X)) \arrow[r, "A_g(\eta_X)"] \arrow[d, "F_\sigma(u_g)"] & A_g(F_\tau(X)) \arrow[d, "F_\tau(u_g)"] \\
F_\sigma(X)\arrow[r, "\eta_X"] & F_\tau(X)
\end{tikzcd}$$ 
This diagram is exactly the diagram we need to show that $(\eta_X)_{X\in Vec(\Delta)}$ is a collection of equivariant morphsims. Naturally of these maps follows from naturality of $\eta$, so we have for each $(X,u)$ a map $\eta^{G}_{(X,u)}$, where the collection $\eta^{G} \in Nat(F_\sigma^{G}, F_\tau^{G})$. 
\end{proof}
Now, if $\eta$ is invariant under the action of $G$, we will have that $M(\eta)_{p,q} = M(\eta)_{g(p),g(q)}$ for all paths $(p,q)$ of appropriate type. This means in our theorem \ref{GPAandEndVecAreME} correspondence, the map $f \in Hom_{G(\Delta)}(\sigma, \tau)$ is invariant under the action of $G$ on $\Delta$. In fact, we can also reverse this statement. If $f \in Hom_{G(\Delta)}(\sigma, \tau)$ is invariant under the action of $G$ (that is $f(pq) = f(g(p)g(q))$ for any $g\in G$ and $p, q$ suitable paths), then the natural transformation it is mapped to also has this property and so carries an induced $Vec(\Delta)^{G}$ natural transformation.

\begin{lemma} The image of any web in $Web(SL_n^{-})$ under the functor defined in theorem \ref{GPAFunctorThm} is invariant under any type-rotating action of $G$ on $\Delta$. 
\end{lemma}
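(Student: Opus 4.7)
The plan is to reduce the lemma to the generating morphisms and then show that $G$-invariance, once established for generators, is automatically preserved by composition and tensor product in $G(\Delta)$.

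First I would unpack what a type-rotating $G$-action does to the directed, edge-labeled graph $\Gamma_\Delta$. Since $G$ acts simplicially and $\ell(g(v)) \equiv \ell(v) + c_g \pmod n$ for some constant $c_g$ depending only on $g$, the label of any directed edge $x \to y$, namely $\ell(x) - \ell(y) \pmod n$, is preserved: $\ell(g(x)) - \ell(g(y)) \equiv \ell(x) - \ell(y) \pmod n$. Thus $g$ permutes the edges of each fixed label, carries paths to paths of the same type (Definition \ref{pathtype}), and sends triangles of $\Delta$ to triangles of $\Delta$ with the corresponding edges having identical labels.

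Next I would check invariance on each generating morphism. By the definitions in section \ref{FunctorDef}, each generator of $Web(SL_n^-)$ is sent to one of the indicator functionals $\mathbbm{1}_{j,k}, \mathbbm{1}_{j+k}, \mathbbm{1}_{j,n-j}, \mathbbm{1}_{\emptyset}$, or $\mathbbm{1}_{\emptyset,\emptyset}$. Each of these functionals assigns the value $1$ to a matched pair of paths $(p_1,p_2)$ precisely when their concatenation $p_1 p_2$ is either a triangle, a $2$-cycle, or a single vertex in $\Gamma_\Delta$, and $0$ otherwise. Since $g$ permutes vertices and preserves the combinatorial structure identified in the previous paragraph, $p_1 p_2$ closes to such a cycle if and only if $g(p_1)g(p_2)$ does. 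Hence for each generating image $\phi$ we have $\phi(g(p_1), g(p_2)) = \phi(p_1, p_2)$ for all $g \in G$.

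Then I would show that the class of $G$-invariant morphisms in $G(\Delta)$ is closed under composition and tensor product. For $G$-invariant $f : \sigma \to \tau$ and $h : \tau \to \omega$, evaluating on a translated pair gives
$$(h \circ f)(g(p_1), g(p_2)) = \sum_{p' \in P'_g} f(g(p_1), p') \cdot h(p', g(p_2)),$$
where $P'_g$ denotes the set of paths of type $\tau$ whose endpoints match those of $g(p_1)$ and $g(p_2)$. Because $g$ acts as a bijection from $P'$ (the corresponding set for $p_1, p_2$) onto $P'_g$, I re-index by $p' = g(p'')$ and use invariance of $f$ and $h$ to rewrite the sum as $\sum_{p'' \in P'} f(p_1, p'') \cdot h(p'', p_2) = (h \circ f)(p_1, p_2)$. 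The tensor product case is immediate from the product formula in Definition \ref{GPA} applied factor by factor, using that $g$ acts diagonally on concatenated path pairs.

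Since every morphism in $Web(SL_n^-)$ is a $\Bbbk$-linear combination of compositions and tensor products of generators, and $G$-invariance is linear and closed under these operations, the image of any web is $G$-invariant, completing the proof. The only nontrivial point is the bijection argument in the composition step, and this is precisely where the type-rotating hypothesis is used: it guarantees that $g$ maps paths of type $\tau$ with prescribed endpoints bijectively onto paths of type $\tau$ with the translated endpoints.
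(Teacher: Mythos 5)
Your proof is correct and follows essentially the same approach as the paper: verify that a type-rotating action preserves edge labels and hence triangles, and check invariance on the indicator functionals that are the images of the generators. You additionally spell out that $G$-invariance is closed under composition and tensor product (via the bijection $P' \to P'_g$), a step the paper leaves implicit but which is needed to pass from generators to arbitrary webs.
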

\begin{proof}
Recall that the images of the $Web(SL_n^{-})$ webs in $G(\Delta)$ are morphisms whose co-domain is simply $\{0,1\}$. So we must only show that the existence of the left triangle implies that of the right triangle. 
$$\begin{tikzcd}
a \arrow[r, "j"] \arrow[rd, "j+ k"] & b \arrow[d, "k"] \\
 & c
\end{tikzcd} \,\,\,\,\,\,\,\,\,\,\,\\,\begin{tikzcd}
g(a) \arrow[r, "j"] \arrow[rd, "j+ k"] & g(b) \arrow[d, "k"] \\
 & g(c)
\end{tikzcd}$$ 
Since the action of $G$ is a graph automorphism, $g(a)$, $g(b)$ and $g(c)$ will be connected. Now, since the action of $G$ is type-rotating, we have that $type(a) - type(b) = type(g(a)) - type(g(b))$ and so all edges will have the same labels as well. So $\mathbbm{1}_{j,k}((a \to b \to c, a \to c)) = \mathbbm{1}_{j,k}((g(a) \to g(b) \to g(c), g(a) \to g(c))). $ The argument that $\mathbbm{1}_{j+k}$ and the other generating webs are invariant follows similarly. 
\end{proof}
The previous two lemmas allow us to extend theorem \ref{GPAandEndVecAreME} to the category $Web(SL_n^{-})$. This then gives us the following theorem. 
\begin{theorem}
\label{EquivMoniodalCategory}
For a fixed $n$, if $\Bbbk$ is a field of characteristic $p \geq n - 1$ and $\Delta$ is an $\tilde{A}_{n-1}$ building of order $q \equiv 1 \mod p$, $Vec(\Delta)^{G}$ has the structure of a $Web(SL_n^{-})$-module category, where the action is by the equivalence in theorem \ref{GPAandEndVecAreME} pre-composed with the functor defined in theorem \ref{GPAFunctorThm}.  
\end{theorem}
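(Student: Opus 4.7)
The plan is to combine the two previous theorems with the preceding two lemmas in a straightforward way. Concretely, let $W\colon Web(SL_n^{-}) \to G(\Delta)$ be the functor of Theorem \ref{GPAFunctorThm} and $E\colon G(\Delta) \to End(Vec(\Delta))$ be the monoidal embedding of Theorem \ref{GPAandEndVecAreME}. I will define a monoidal functor $\Phi\colon Web(SL_n^{-}) \to End(Vec(\Delta)^{G})$ by sending a sequence $\sigma=(\sigma_1,\dots,\sigma_k)$ to the composite $F_\sigma^{G}:=F_{\sigma_1}^{G}\circ\cdots\circ F_{\sigma_k}^{G}$ (with the empty sequence going to the identity functor), and sending a generating web $w\colon\sigma\to\tau$ to the equivariant natural transformation $(E(W(w)))^{G}$ produced by Lemma \ref{GInvariantMorphismsAreEquivNatTrans}. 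That this is well-defined uses the final lemma of the section: $W(w)$ is $G$-invariant in $G(\Delta)$, and under the isomorphism of Theorem \ref{FmNatTransAreGPAMaps} this $G$-invariance translates exactly to the invariance hypothesis required by Lemma \ref{GInvariantMorphismsAreEquivNatTrans}.

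Next I would verify that $\Phi$ is a well-defined functor, i.e.\ that the $Web(SL_n^{-})$ relations hold. The key observation is that the equivariant natural transformation $\eta^{G}$ induced by a $G$-invariant $\eta$ has component maps literally equal to $\eta_X$; hence vertical composition and $k$-linear combinations of equivariant natural transformations agree with the corresponding operations on the underlying natural transformations. Therefore every relation that already holds between the images in $End(Vec(\Delta))$ (which is the content of Theorems \ref{GPAFunctorThm} and \ref{GPAandEndVecAreME}) automatically descends to $End(Vec(\Delta)^{G})$. So the functoriality part is essentially free.

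For the monoidal structure, I must check that $F_\sigma^{G}\circ F_\tau^{G}=F_{\sigma\otimes\tau}^{G}$ on the nose and that horizontal composition of webs is preserved by $\Phi$. The first equality follows from Definition \ref{FmGFunctors}, which builds $F_m^{G}$ so that it acts by $F_m$ on underlying objects, combined with Lemma \ref{CommutingFunctors} (which ensures the equivariant data $F_m(u)$ is consistently inherited under composition). The second follows because the tensor product of two equivariant natural transformations, at the component $(X,u)$, coincides with the tensor product of the underlying components $\eta_X$ and $\mu_X$ in $End(Vec(\Delta))$, and this tensor structure is already preserved by $E\circ W$.

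The main obstacle I expect is the diagrammatic bookkeeping in the last step: carefully checking that the equivariance compatibility square of Definition \ref{EquivObject} for $\eta^{G}\otimes\mu^{G}$ is genuinely the same square as for $(\eta\otimes\mu)^{G}$. This reduces to the fact that the map $u_g$ for the product functor $F_\sigma^{G}\circ F_\tau^{G}$ is, by construction via Lemma \ref{CommutingFunctors}, the composite of the individual structure maps, together with the naturality of $\eta$ applied to the equivariance isomorphisms for $\mu$. Once that compatibility is written down, monoidality of $\Phi$ is immediate and the theorem follows by pre-composing $\Phi$ with the obvious identification of $Web(SL_n^{-})$-module category structures with monoidal functors to endomorphism categories, as recalled at the start of Section \ref{Constructing Module Categories}.
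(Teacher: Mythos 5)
Your proposal is correct and follows essentially the same route as the paper, which proves this theorem simply by combining Theorem \ref{GPAFunctorThm}, Theorem \ref{GPAandEndVecAreME}, Lemma \ref{GInvariantMorphismsAreEquivNatTrans}, and the lemma on $G$-invariance of the web images; the paper leaves the verification that relations and the monoidal structure descend to $Vec(\Delta)^{G}$ implicit, whereas you spell it out via the observation that $\eta^{G}$ has the same underlying components as $\eta$. No gaps.
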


This theorem, along with theorem $\ref{GPAFunctorThm}$ is enough to prove theorem B. Now, we have a general action of $Web(SL_n^{-})$ on $Vec(\Delta)^{G}$, where the specific structure of $Vec(\Delta)^{G}$ is dependent on the action of $G$. We can consider certain classes of actions and what the functor $Web(Sl_n^{-}) \rightarrow Vec(\Delta)^{G}$ looks like under these classes.
\begin{example}
\normalfont
\label{TPModuleCategory}
Recall that a simply transitive action of $G$ on $\Delta$ is one for which there exists a unique $g \in G$ where $g(x) = y$ for every $x,y \in V(\Delta)$. \cite{CMSZ93} and \cite{Cart95} show that when $G$ acts simply transitively on $\Delta$, we can reduce the combinatorial information in $\Delta$ to a simpler object called a triangle presentation of type $\tilde{A}_{n-1}$. \cite{Jones21} further showed that there exists a fiber functor from $Web(SL_n^{-})$ that in certain setting extends to a fiber functor on $Tilt(SL_{2k+1})$ whose structure comes from these triangle presentations. Now, when $G$ admits a simply transitive action on $\Delta$, that $Vec(\Delta)^{G} \simeq Vec$. So we recover the existence of the previously discovered fiber functor as a special case of theorem 7.3. 
\end{example}

\begin{example}
\normalfont
\label{TransitiveMCGeneral}
In general, we have for any transitive action of $G$ on $\Delta$, that $Vec(\Delta)^{G} \simeq Rep(Stab(*))$, where $*$ is any vertex in $\Delta$. So, if we consider the stabilizers of various actions, we can realize some representation categories as $Web(SL_n^{-})$ module categories. 
\end{example}

\begin{example}
\label{PGLnMCEx}
\normalfont
A natural place to start is to consider the action of $PGL_n(K)$ on the building described in example \ref{BuidlingExample}. See that under this action, the stabilizer of the standard lattice ($Ae_1 \oplus ... \oplus Ae_n$ where $e_1, ..., e_n$ is the standard basis for $K^{n}$) is $SL_n(A)$. So, through this action, we have that $Rep(SL_n(A))$ is a $Web(SL_n^{-})$ module category. 

\end{example}

\bibliographystyle{alpha}
\bibliography{refs}

\end{document}